    \newcommand{\vertiii}[1]{{\left\vert\kern-0.25ex\left\vert\kern-0.25ex\left\vert #1 
        \right\vert\kern-0.25ex\right\vert\kern-0.25ex\right\vert}}
    \newenvironment{proof}[1][Proof]{%
      \par\noindent\textbf{#1. }\ignorespaces
    }{%
      \hfill$\square$\par
    }
    \newcommand{\qedtheorem}{\hfill \ensuremath{\Diamond}}
\begin{document}

\begin{frontmatter}

\title{On Policy Stochasticity in Mutual Information Optimal Control of Linear Systems} 

\thanks[footnoteinfo]{This paper was not presented at any IFAC 
meeting. Corresponding author K. Kashima. 
}

\author[Kyoto]{Shoju Enami}\ead{enami.shoujyu.57r@st.kyoto-u.ac.jp},    
\author[Kyoto]{Kenji Kashima}\ead{kk@i.kyoto-u.ac.jp},               

\address[Kyoto]{Graduate School of Informatics, Kyoto University, Kyoto, Japan}  

\begin{keyword}                            
Mutual information regularization, optimal control, policy stochasticity, stochastic control, temperature parameter             
\end{keyword}

\begin{abstract}                          
In recent years, mutual information optimal control has been proposed as an extension of maximum entropy optimal control.
Both approaches introduce regularization terms to render the policy stochastic, and it is important to theoretically clarify the relationship between the temperature parameter (i.e., the coefficient of the regularization term) and the stochasticity of the policy.
Unlike in maximum entropy optimal control, this relationship remains unexplored in mutual information optimal control.
In this paper, we investigate this relationship for a mutual information optimal control problem (MIOCP) of discrete-time linear systems.
After extending the result of a previous study of the MIOCP, we establish the existence of an optimal policy of the MIOCP, and then derive the respective conditions on the temperature parameter under which the optimal policy becomes stochastic and deterministic.
These two sufficient conditions also serve as sufficient conditions for the optimal policy to be a feedback and a feedforward policy, respectively.
Furthermore, we also show that the two sufficient conditions also serve as sufficient conditions for the policy computed by an alternating optimization algorithm to be a stochastic feedback and a deterministic feedforward policy, respectively.
The validity of the theoretical results is demonstrated through numerical experiments.
\end{abstract}

\end{frontmatter}

\section{Introduction}\label{sec:Introduction}

Maximum entropy optimal control and reinforcement learning (RL) introduces stochastic inputs by adding an entropy regularization term of the policy to the objective function \cite{haarnoja2018soft, haarnoja2017reinforcement, ito2023maximum, ito2024maximum}.
Entropy regularization offers various benefits.
In the field of RL, it has the advantage of promoting exploration \cite{haarnoja2017reinforcement}.
Furthermore, in model-based control theory, it provides numerous advantages, such as robustness against disturbances \cite{eysenbach2021maximum, hazan2019provably}, equivalence to an inference problem \cite{levine2018reinforcement}, and equivalence to the Schr\"{o}dinger bridge \cite{ito2023maximum}.
All the benefits are brought about by the stochasticity induced by entropy regularization that encourages the policy to approach a uniform distribution in terms of the Kullback–Leibler (KL) divergence.
However, when a control problem includes inputs that are rarely useful, policies with high entropy that assign similar probabilities to all inputs may perform poorly.

As an extension of entropy regularization, mutual information regularization has been proposed in recent years \cite{grau2018soft, leibfried2020mutual, malloy2020deep, enami2025mutual} to deal with such situations by adjusting the importance of inputs while preserving input stochasticity.
In mutual information regularization, not only the policy but also a reference feedforward policy, which is called the \textit{prior} in previous studies \cite{grau2018soft, leibfried2020mutual, malloy2020deep, enami2025mutual}, is optimized simultaneously, unlike in entropy regularization, where the prior is fixed to a uniform distribution.
Through the optimization of the prior, it is expected that reasonably different probabilities are assigned to inputs while maintaining input stochasticity.
The validity of this extension has been demonstrated.
Indeed, according to the experimental findings reported in \cite{grau2018soft}, mutual information RL can outperform maximum entropy RL in certain tasks.

Mutual information regularization has recently been used not only for the above purpose but also for privacy protection.
Mutual information optimal control can be formulated as an optimal control problem with a regularization term of the mutual information between the state and the input.
In \cite{cundy2024privacy}, mutual information regularization is used to make it more difficult to estimate the state from the input history by reducing the state dependence of the policy.

Although various studies have explored mutual information RL and optimal control, analytical results, such as properties of optimal solutions, are scarce.
In particular, analyzing the relationship between the optimal policy and the regularization coefficient, often referred to as the \textit{temperature} in entropy regularization approaches, is crucial for tuning its effect.
In maximum entropy optimal control, it is known that as the temperature increases, the optimal policy approaches the uniform distribution, thereby enhancing exploration \cite{haarnoja2017reinforcement, ito2023maximum}.
This fact serves as a guideline for tuning the temperature in maximum entropy optimal control.
In contrast, in mutual information optimal control, where both the policy and the prior are optimized simultaneously, the theoretical relationship between the optimal policy and the temperature is more complex and remains unclear.
Revealing this relationship is an essential open problem.

In addition, from a practical perspective, it is also important to analyze the relationship between the policy computed by an algorithm and the temperature.
Algorithms in mutual information RL and optimal control are fundamentally based on alternating optimization between the policy and the prior.
Although it is ensured that the alternating optimization of the policy and the prior converges to an optimal solution in \cite{leibfried2020mutual}, this result imposes a strong assumption that the state distribution is independent of the policy.
To enhance practical relevance, the relationship needs to be investigated under more practical assumptions.

Against this background, in this paper, we mainly investigate the relationship between the temperature parameter and the stochasticity of both the optimal policy and the policy computed by the alternating optimization algorithm, in the context of mutual information optimal control.
Note that this investigation also reveals the relationship between the temperature parameter and the state dependence of both of them.
In particular, we consider a mutual information optimal control problem (MIOCP) for stochastic discrete-time linear systems with quadratic costs and a Gaussian prior class.
We start by extending the alternating optimization algorithm for the MIOCP introduced in \cite{enami2025mutual}.
Then, the main results of this paper are listed as follows:

\paragraph*{(1)}
We analyze the properties of the optimal solution to the MIOCP.
We first ensure the existence of the optimal solution.
Next, we reveal the relationship between the optimal policy and the temperature parameter $\varepsilon$; see Fig. \ref{fig:rough_sketch}.
When $\varepsilon$ is small enough to satisfy \eqref{eq:condition where optimal covariance matrices are full-rank} in Theorem \ref{thm:condition where optimal covariance matrices are full-rank}, the optimal policy becomes stochastic, whereas when $\varepsilon$ is large enough to satisfy \eqref{eq:condition where optimal covariance matrices are 0} in Theorem \ref{thm:condition where optimal covariance matrices are 0}, the optimal policy becomes deterministic.
This result holds under practical assumptions.
In addition, these two sufficient conditions also serve as sufficient conditions for the optimal policy to be a feedback (state-dependent) and a feedforward (state-independent) policy, respectively.
Note that this relationship in mutual information optimal control is in stark contrast to that in maximum entropy optimal control, where the optimal policy is always a feedback one for any $\varepsilon>0$, and a larger $\varepsilon$ leads to a more stochastic optimal policy.

\paragraph*{(2)}
We also show that the policy computed by the alternating optimization algorithm for the MIOCP also becomes a stochastic feedback and a deterministic feedforward policy when the temperature parameter is sufficiently small and large, respectively, under the same practical assumptions as those used to establish the relationship between the optimal policy and the temperature parameter.

It is worth emphasizing that this work is the first one that analyzes the relationship between the temperature parameter and the policy stochasticity in mutual information optimal control.

\begin{figure}[htbp]
    \begin{center}
    \centerline{\includegraphics[width=85mm]{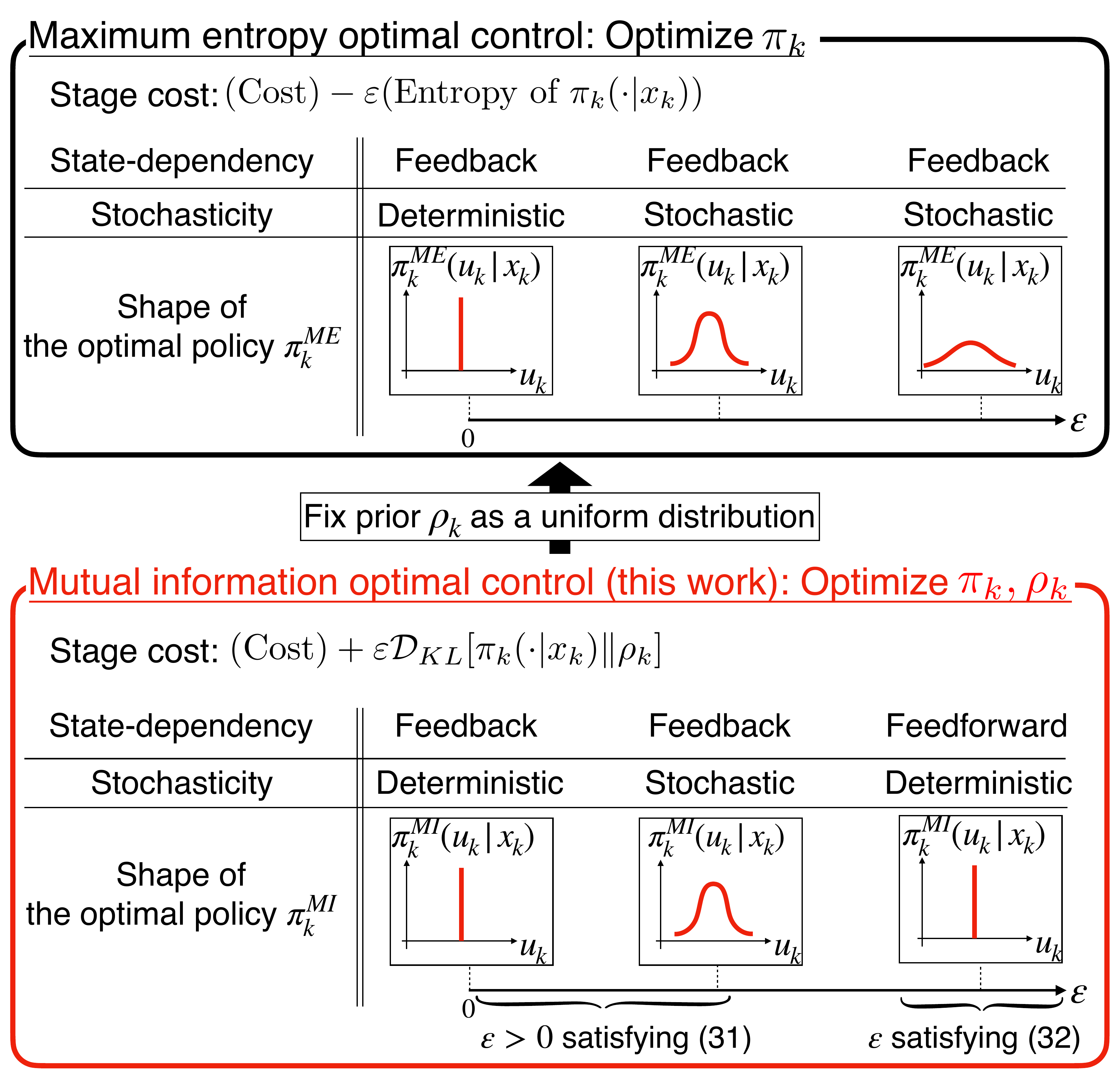}}
    \caption{Rough sketch of how the optimal policy $\pi_{k}^{ME}$ (in maximum entropy optimal control) and the optimal policy $\pi_{k}^{MI}$ (in mutual information optimal control) relate to the temperature parameter $\varepsilon$.}
    \label{fig:rough_sketch}
    \end{center}
\end{figure}

\paragraph*{Organization}

This paper is organized as follows: In Section \ref{sec:Problem Formulation}, we formulate an MIOCP for stochastic discrete-time linear systems with quadratic cost functions, a Gaussian initial state distribution and a Gaussian prior class.
In Section \ref{sec:Alternating Optimization of the MIOCP}, we extend the alternating optimization algorithm for the MIOCP.
In Section \ref{sec:Properties of Optimal Solutions to the MIOCP}, we provide two properties of the optimal solution to the MIOCP: the existence, and sufficient conditions on the temperature parameter under which the optimal policy is a stochastic feedback and a deterministic feedforward policy, respectively.
Section \ref{sec:Properties of the Alternating Optimization Algorithm for the MIOCP} shows that the policy computed by the alternating optimization algorithm also converges to a stochastic feedback and a deterministic feedforward policy, respectively.
In Section \ref{sec:Numerical Examples}, we demonstrate the validity of the theoretical results in Section \ref{sec:Properties of the Alternating Optimization Algorithm for the MIOCP} through numerical experiments.
Section \ref{sec:Conclusion} gives some concluding remarks.

\paragraph*{Notation}
Define the imaginary unit as $\mathrm{i}:=\sqrt{-1}$.
The set of all integers that are larger than or equal to $a$ is denoted by $\mathbb{Z}_{\geq a}$.
The Borel $\sigma$-algebra on $\mathbb{R}^{n}$ is denoted by $\mathcal{B}_{n}$.
The set of integers $\{k,k+1,\ldots, l\}(k\leq l)$ is denoted by $\llbracket k, l\rrbracket$.
For two scalars $x,y \in \mathbb{R}$, denote the minimum function by $\min(x,y)$.
The set of all symmetric matrices of size $n$ is denoted by $\mathbb{S}^{n}$.
For $A, B \in \mathbb{S}^{n}$, we write $A \succ B$ (resp. $A \succeq B$) if $A-B$ is positive definite (resp. positive semi-definite).
The identity matrix is denoted by $I$, and its dimension depends on the context.
The Euclidean norm and the Frobenius norm are denoted by the same notation $\|\cdot \|$.
The determinant and the trace of $A \in \mathbb{R}^{n\times n}$ is denoted by $|A|$ and $\mathrm{Tr}(A)$, respectively.
For $A \in \mathbb{R}^{n\times m}$, denote the image of $A$ by $\mathrm{Im}(A)$.
For $x \in \mathbb{R}^{n}$ and $A \in \mathbb{S}^{n}$, denote $\|x\|_{A} := (x^{\top}Ax)^{\frac{1}{2}}$.
Note that $\|\cdot\|_{A}$ is not a norm unless $A \succ 0$.
For $A \in \mathbb{R}^{n\times n}$, denote its smallest and largest eigenvalues by $\min(A)$ and $\max(A)$, respectively.
For $A \in \mathbb{R}^{n \times m}$, denote the Moore-Penrose inverse of $A$ by $A^{\dagger}$.
The expected value of a random variable is denoted by $\mathbb{E}[\ \cdot\ ]$.
A multivariate Gaussian distribution on $\mathcal{B}_{n}$ with mean $\mu \in \mathbb{R}^{n}$ and covariance matrix $\Sigma \succeq 0$ is denoted by $\mathcal{N}(\mu,\Sigma)$.
Denote the probability density function (PDF) of $\mathcal{N}(\mu,\Sigma)$ by $\tilde{\mathcal{N}}(\mu,\Sigma)$ if it exists.
When we emphasize that a random variable $w\in \mathbb{R}^{n}$ follows $\tilde{\mathcal{N}}(\mu, \Sigma)$, $w$ is described explicitly as $\tilde{\mathcal{N}}(w | \mu, \Sigma)$.
For probability distributions $p$ and $q$, the Radon–Nikodym derivative is denoted by $\frac{dp}{dq}$ when it is defined.
The KL divergence between probability distributions $p$ and $q$ is denoted by $\mathcal{D}_{\text{KL}}[p \| q]$ when it is defined.The mutual information between two random variables $x,y$ is denoted by $\mathcal{I}(x,y)$.
We use the same symbol for a random variable and its realization.
We abuse the notation $p$ as the probability distribution of a random variable depending on the context.

\section{Problem Formulation} \label{sec:Problem Formulation}
In this paper, we investigate the following MIOCP.
\begin{prob}
    Find a pair of a policy $\pi = \{\pi_{k}\}_{k=0}^{T-1}$ and a prior $\rho = \{\rho_{k}\}_{k=0}^{T-1}$ that solves
    \begin{align}
        &\min_{\pi, \rho\in \mathcal{R}} J(\pi,\rho)\nonumber\\
        &\hspace{20pt}:=\mathbb{E}\left[ \sum_{k=0}^{T-1} \left\{\frac{1}{2}\|u_{k}\|_{R_{k}}^{2} + \varepsilon \mathcal{D}_{\text{KL}}[\pi_{k}(\cdot|x_{k}) \| \rho_{k}] \right\}\right.\nonumber \\
        &\left. \hspace{33pt}+ \frac{1}{2}\|x_{T}\|_{F}^{2} \right] \label{eq:objective function of MIOCP}\\
        &\mbox{s.t. }x_{k+1} = A_{k} x_{k} + B_{k} u_{k} + w_{k}, \label{eq:linear system of MIOCP}\\
        &\hspace{16pt}u_{k} \sim \pi_{k}(\cdot|x) \ \mbox{given }x=x_{k}, \label{eq:stochastic feedback input of MIOCP}\\
        &\hspace{16pt}w_{k} \sim \mathcal{N}(0,\Sigma_{w_{k}}), \label{eq:process noise}\\
        &\hspace{16pt}x_{0} \sim \mathcal{N}(0, \Sigma_{x_{\text{ini}}}), \label{eq:initial condition of MIOCP}
    \end{align}
    where $T \in \mathbb{Z}_{\geq 1}, x_{k} \in \mathbb{R}^{n}, u_{k} \in \mathbb{R}^{m}, A_{k} \in \mathbb{R}^{n \times n}, B_{k}\in \mathbb{R}^{n \times m}, R_{k}, F , \Sigma_{w_{k}}, \Sigma_{x_{\text{ini}}} \succ 0$.
    The temperature parameter $\varepsilon >0$ determines the relative importance of the regularization term versus the cost.
    The prior class $\mathcal{R}$ is defined as
    \begin{align*}
        \mathcal{R} := \{ & \rho = \{ \rho_{k} \}_{k=0}^{T-1} \mid \nonumber  \\
        &\rho_{k} = \mathcal{N}(\mu_{\rho_{k}}, \Sigma_{\rho_{k}}), \mu_{\rho_{k}} \in \mathbb{R}^{m}, \Sigma_{\rho_{k}} \succeq 0 \}.
    \end{align*}
    A stochastic policy $\pi_{k}$ is a conditional probability measure on $\mathcal{B}_{m}$ given $x_{k} = x$ and a prior $\rho_{k}$ is a probability measure on $\mathcal{B}_{m}$. \qedtheorem
    \label{prob:MIOCP}
\end{prob}

Because analyzing Problem \ref{prob:MIOCP} for general policies and priors is challenging, we focus on Gaussian distributions.
Specifically, we consider the prior class $\mathcal{R}$.

Here, we briefly discuss motivations for considering Problem \ref{prob:MIOCP}.
By formally fixing $\rho$ to a uniform distribution $p^{\text{uni}}(u) \propto 1$, Problem \ref{prob:MIOCP} reduces to the following maximum entropy optimal control problem (MEOCP) \cite{ito2023maximum}.
\begin{align*}
    &\min_{\pi} \mathbb{E}\left[ \sum_{k=0}^{T-1} \left\{\frac{1}{2}\|u_{k}\|_{R_{k}}^{2} - \varepsilon \mathcal{H}(\pi_{k}(\cdot|x_{k})) \right\} + \frac{1}{2}\|x_{T}\|_{F}^{2} \right]\\
    &\mbox{s.t. }\eqref{eq:linear system of MIOCP}\text{--}\eqref{eq:initial condition of MIOCP},
\end{align*}
where $\mathcal{H}(p):=-\int_{\mathbb{R}^{m}}\pi_{k}(u|x_{k})\log \pi_{k}(u|x_{k}) du$ is the entropy of $\pi_{k}(\cdot|x_{k})$.
Although the introduction of stochastic inputs via entropy regularization is typically associated with promoting exploration in model-free RL, it also offers substantial benefits in model-based control, such as robustness against disturbances \cite{hazan2019provably} and equivalence to an inference problem \cite{levine2018reinforcement}.
In this context, $\rho$ corresponds to the parameters of the set of uncertain disturbances or the prior distribution in the inference problem.
While the approach of optimizing $\rho$ simultaneously alongside $\pi$ was originally introduced in RL to tune the exploration effect \cite{leibfried2020mutual, grau2018soft}, optimizing $\rho$ in model-based control can be interpreted as the automatic adjustment of the set of uncertain disturbances or the prior distribution in the inference problem, which balances conservatism and control performance.
From the former perspective, this adjustment balances control performance and conservatism, while from the latter, it provides a framework that optimizes not only the posterior but also the prior distribution, whose design is often nontrivial.

Another motivation is privacy protection \cite{cundy2024privacy}.
By optimizing only $\rho$, the regularization term  coincides with the mutual information of $x_{k}$ and $u_{k}$, that is, $\min_{\rho_{k}}\mathbb{E}[\mathcal{D}_{\text{KL}}[\pi_{k}(\cdot|x_{k})\|\rho_{k}]]=\mathcal{I}(x_{k},u_{k})$ \cite{grau2018soft, leibfried2020mutual, enami2025mutual}, which is the reason why we call Problem 1 an MIOCP. 
Therefore, this regularization has the effect of making it difficult to infer the state from the input, and vice versa.

\begin{rem}
    In this remark, we explain why $\rho$ is referred to as the ``prior''.
    As shown in \cite{grau2018soft}, Problem \ref{prob:MIOCP} with $\rho$ fixed is equivalent to an inference problem, known as ``control as inference'', which infers the probability of given state and input sequences being optimal, that is, minimizing the cost \eqref{eq:objective function of MIOCP} excluding the regularization term.
    The prior distribution in this inference problem is given by the distribution of the state and input sequences generated by the dynamics \eqref{eq:linear system of MIOCP}, the noise \eqref{eq:process noise}, the initial condition \eqref{eq:initial condition of MIOCP}, and the stochastic input given by $u_{k}\sim \rho_{k}$.
    For this reason, $\rho$ is literally called the prior.
    \qedtheorem
    \label{rem:origin of mutual information optimal control}
\end{rem}

\begin{rem}
    Problem \ref{prob:MIOCP} can be generalized as follows:
    \begin{align*}
        &\min_{\pi, \rho \in \mathcal{R}} \mathbb{E}\left[ \sum_{k=0}^{T-1} \left\{\frac{1}{2}\|u_{k}\|_{R_{k}}^{2} + \varepsilon \mathcal{D}_{\text{KL}}[\pi_{k}(\cdot|x_{k}) \| \rho_{k}] \right\}\right. \\
        &\left. \hspace{33pt}+ \frac{1}{2}\|x_{T}-\mu_{x_{\text{fin}}}\|_{F}^{2} \right] \\
        &\mbox{s.t. }\eqref{eq:linear system of MIOCP}\text{--}\eqref{eq:process noise}, x_{0} \sim \mathcal{N}(\mu_{x_{\text{ini}}}, \Sigma_{x_{\text{ini}}}),
    \end{align*}
    where $\mu_{x_{\text{ini}}}, \mu_{x_{\text{fin}}} \in \mathbb{R}^{n}$.
    Actually, by following the same way as in \cite[Section IV]{ito2023maximum}, this generalized MIOCP can be decomposed into a linear-quadratic-regulator (LQR) problem and Problem \ref{prob:MIOCP}.
    The LQR problem can be solved by applying existing results such as \cite{lewis2012optimal}.
    We therefore focus on the MIOCP in the simple case given by Problem \ref{prob:MIOCP}.
    \qedtheorem
\end{rem}

\section{Alternating Optimization} \label{sec:Alternating Optimization of the MIOCP}

This section extends the alternating optimization algorithm for Problem \ref{prob:MIOCP} proposed in \cite{enami2025mutual}.
Although the flow in this section mirrors that in \cite{enami2025mutual}, we emphasize that the results in this section involve a technical extension.
Specifically, the prior class $\mathcal{R}$ in this paper contains degenerate Gaussian distributions, whereas \cite{enami2025mutual} only considers nondegenerate Gaussian priors.
As a result, the results of \cite{enami2025mutual} can not be directly used because, unlike \cite{enami2025mutual}, the analysis of this paper has to avoid discussions involving PDFs of the policy and prior.
Although this extension is superficial, it will play an important role in our main results presented in Sections \ref{sec:Properties of Optimal Solutions to the MIOCP} and \ref{sec:Properties of the Alternating Optimization Algorithm for the MIOCP}, as will be referred to in Remark \ref{rem:importance of the choice of prior class R}.

\subsection{Optimal Policy for Fixed Prior}\label{subsec:Optimal Policy for Fixed Priorr}

Let us introduce the following lemma.
\begin{lem} 
    For a given prior $\rho \in \mathcal{R},\rho_{k}=\mathcal{N}(\mu_{\rho_{k}},\Sigma_{\rho_{k}})$, define $\Pi_{k}$ as the solution to the following Riccati equation:
    \begin{align}
        \Pi_{k} = & A_{k}^{\top} \Pi_{k+1} A_{k} -\frac{1}{\varepsilon}A_{k}^{\top} \Pi_{k+1} B_{k} \Sigma_{\rho_{k}}^{1/2} \nonumber \\
        &\times (I+\Sigma_{\rho_{k}}^{1/2}C_{k}\Sigma_{\rho_{k}}^{1/2} )^{-1} \nonumber\\
        &\times\Sigma_{\rho_{k}}^{1/2} B_{k}^{\top} \Pi_{k+1} A_{k},k \in \llbracket 0, T-1\rrbracket ,\label{eq:Riccati difference equation}\\
        \Pi_{T} = & F, \label{eq:terminal condition of Riccati difference equation}
    \end{align}
    where $C_{k}:=(R_{k}+B_{k}^{\top}\Pi_{k+1}B_{k})/\varepsilon, k \in \llbracket 0,T-1 \rrbracket$.
    Then $\Pi_{k}\succeq 0$ for any $k \in \llbracket 0,T \rrbracket$.
    In addition, if $A_{k}$ is invertible for any $k \in \llbracket0,T-1 \rrbracket$, $\Pi_{k} \succ 0$ for any $k \in \llbracket 0,T \rrbracket$.
    \qedtheorem
    \label{lem:positive semidefiniteness of Pi}
\end{lem}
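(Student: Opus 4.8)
The plan is to argue by backward induction on $k$, from $k=T$ down to $k=0$, showing that $\Pi_{k+1}\succeq 0$ forces $\Pi_k\succeq 0$ and that, under invertibility of the $A_k$, $\Pi_{k+1}\succ 0$ forces $\Pi_k\succ 0$; the base case is immediate from $\Pi_T=F\succ 0$. First I would record that the recursion is well posed: if $\Pi_{k+1}\succeq 0$, then since $R_k\succ 0$ and $\varepsilon>0$ we have $C_k=(R_k+B_k^\top\Pi_{k+1}B_k)/\varepsilon\succ 0$, so $\Sigma_{\rho_k}^{1/2}C_k\Sigma_{\rho_k}^{1/2}\succeq 0$ and hence $I+\Sigma_{\rho_k}^{1/2}C_k\Sigma_{\rho_k}^{1/2}\succeq I\succ 0$ is invertible. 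Throughout I abbreviate $P:=\Pi_{k+1}$ and $S:=\Sigma_{\rho_k}^{1/2}$.

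The core idea is to realize $\Pi_k$ as a Schur complement of a manifestly positive semidefinite block matrix, which sidesteps any appeal to $\Sigma_{\rho_k}^{-1}$ and therefore covers the degenerate priors admitted in $\mathcal{R}$. Concretely, I would introduce
\[
M:=\begin{pmatrix} A_k^\top P A_k & \tfrac{1}{\sqrt{\varepsilon}}\,A_k^\top P B_k S\\ \tfrac{1}{\sqrt{\varepsilon}}\,S B_k^\top P A_k & I+S C_k S\end{pmatrix}.
\]
Because the bottom-right block $I+SC_kS\succ 0$ is invertible, its Schur complement in $M$ is exactly the right-hand side of the Riccati recursion, so it suffices to show $M\succeq 0$. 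For this I would split $M=N^\top P N+\mathrm{diag}(0,\,I+\tfrac{1}{\varepsilon}SR_kS)$ with $N:=\begin{pmatrix}A_k & \tfrac{1}{\sqrt{\varepsilon}}B_k S\end{pmatrix}$: the first summand is positive semidefinite since $P\succeq 0$, and the second is positive semidefinite since the identity $SC_kS=\tfrac{1}{\varepsilon}SR_kS+\tfrac{1}{\varepsilon}SB_k^\top P B_k S$ cancels the $B_k^\top P B_k$ contribution, leaving $I+\tfrac{1}{\varepsilon}SR_kS\succeq I\succ 0$ (using $R_k\succ 0$, $S\succeq 0$). The standard Schur-complement characterization then yields $\Pi_k\succeq 0$.

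For the strict statement I would keep the same $M$ but now assume $P\succ 0$ and each $A_k$ invertible, and establish $M\succ 0$ directly from its quadratic form: for $(a^\top,b^\top)\neq 0$,
\[
\begin{pmatrix}a\\b\end{pmatrix}^{\!\top}\! M\begin{pmatrix}a\\b\end{pmatrix}=\Big\|A_k a+\tfrac{1}{\sqrt{\varepsilon}}B_k S b\Big\|_{P}^{2}+\|b\|^{2}+\tfrac{1}{\varepsilon}\|S b\|_{R_k}^{2},
\]
and vanishing forces first $b=0$ (from the $\|b\|^2$ term) and then $A_k a=0$, whence $a=0$ by invertibility of $A_k$. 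With $M\succ 0$ and $I+SC_kS\succ 0$, the Schur-complement characterization gives $\Pi_k\succ 0$, completing the induction. The main obstacle — and the reason the recursion is written with the symmetric factors $\Sigma_{\rho_k}^{1/2}$ rather than in Woodbury form — is precisely the degeneracy of the prior covariance: the factorization/Schur-complement argument is what lets me treat singular $S$ uniformly, and the only place demanding genuine care is verifying $M\succ 0$ rather than merely $M\succeq 0$, where invertibility of $A_k$ is indispensable.
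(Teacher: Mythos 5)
Your proof is correct, and it takes a genuinely different route from the paper's. The paper's proof applies the Woodbury matrix identity to rewrite the recursion as
\begin{align*}
\Pi_{k} = A_{k}^{\top}\Pi_{k+1}^{1/2}\bigl\{ I+\Pi_{k+1}^{1/2}B_{k}\Sigma_{\rho_{k}}^{1/2}(\varepsilon I+\Sigma_{\rho_{k}}^{1/2}R_{k}\Sigma_{\rho_{k}}^{1/2})^{-1}\Sigma_{\rho_{k}}^{1/2}B_{k}^{\top}\Pi_{k+1}^{1/2}\bigr\}^{-1}\Pi_{k+1}^{1/2}A_{k},
\end{align*}
so that $\Pi_{k}$ is exhibited as a congruence $(\Pi_{k+1}^{1/2}A_{k})^{\top}\{\cdot\}^{-1}(\Pi_{k+1}^{1/2}A_{k})$ of the inverse of a positive definite matrix; positive semidefiniteness is then immediate, and invertibility of $A_{k}$ (together with $\Pi_{k+1}\succ 0$) gives invertibility of $\Pi_{k}$, hence $\Pi_{k}\succ 0$, and the claim follows by the same backward recursion you use. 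You instead embed the Riccati right-hand side as the Schur complement of the block $I+SC_{k}S$ in your matrix $M$, certify $M\succeq 0$ via the decomposition $M=N^{\top}\Pi_{k+1}N+\mathrm{diag}(0,\,I+\tfrac{1}{\varepsilon}SR_{k}S)$, and handle the strict case by inspecting the quadratic form of $M$ directly; your algebra checks out (in particular the cancellation $I+SC_{k}S-\tfrac{1}{\varepsilon}SB_{k}^{\top}\Pi_{k+1}B_{k}S=I+\tfrac{1}{\varepsilon}SR_{k}S$ is exactly right, and the standard Schur-complement characterizations you invoke for both $\succeq 0$ and $\succ 0$ apply since the bottom-right block is positive definite). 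The trade-off: the paper's Woodbury form is more compact and is reused later in the paper (its proof of Lemma \ref{lem:inequalities for condition where optimal covariance matrices are full-rank} builds on the same rewriting), whereas your argument avoids having to verify a nontrivial matrix identity and makes the positivity certificate completely transparent as a sum of two manifestly positive semidefinite terms; both handle the degenerate priors in $\mathcal{R}$ correctly by working with $\Sigma_{\rho_{k}}^{1/2}$ rather than $\Sigma_{\rho_{k}}^{-1}$.
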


\begin{proof}
    From the Woodbury matrix identity \cite[Theorem 18.2.8.]{harville1997matrix}, \eqref{eq:Riccati difference equation} can be rewritten as
    \begin{align}
        \Pi_{k} = &A_{k}^{\top}\Pi_{k+1}^{1/2}\left\{ I+\Pi_{k+1}^{1/2}B_{k}\Sigma_{\rho_{k}}^{1/2}(\varepsilon I+\Sigma_{\rho_{k}}^{1/2}R_{k}\Sigma_{\rho_{k}}^{1/2})^{-1} \right.\nonumber\\
        &\left.\times \Sigma_{\rho_{k}}^{1/2}B_{k}^{\top}\Pi_{k+1}^{1/2}\right\}^{-1}\Pi_{k+1}^{1/2}A_{k}.\label{eq:Woodbury matrix identity for Pi}
    \end{align}
    Because $\Pi_{T}=F\succ 0$ and the expression in the curly brackets in \eqref{eq:Woodbury matrix identity for Pi} is positive definite, $\Pi_{T-1}\succeq 0$.
    In addition, if $A_{T-1}$ is invertible, then $\Pi_{T-1}$ is also invertible, which implies that $\Pi_{T-1}\succ 0$.
    By applying this procedure recursively, we obtain the desired result.
\end{proof}

Note that $C_{k} \succ 0$ for any $k \in \llbracket 0,T-1 \rrbracket$ from Lemma \ref{lem:positive semidefiniteness of Pi}.
Now, the following proposition derives the optimal policy for a fixed prior.
See Appendix \ref{app:Proof of Proposition of optimal policy of MIOCP for fixed prior} for the proof.

\begin{prop}
    Consider a given prior $\rho \in \mathcal{R},\rho_{k}=\mathcal{N}(\mu_{\rho_{k}},\Sigma_{\rho_{k}})$.
    Assume that $A_{k}$ is invertible for any $k \in \llbracket 0,T-1 \rrbracket$.
    Then, the unique optimal policy $\pi^{\rho}$ of Problem \ref{prob:MIOCP} with the prior fixed to the given $\rho$ is given by
    \begin{align}
        \pi_{k}^{\rho}(\cdot|x) = \mathcal{N}(\mu_{\pi_{k}^{\rho}}, \Sigma_{\pi_{k}^{\rho}}),  k \in \llbracket 0,T-1 \rrbracket,\label{eq:optimal policy of MIOCP for fixed prior}
    \end{align}
    where
    \begin{align}
        r_{k} = & A_{k}^{-1} r_{k+1} -  \Pi_{k}^{-1} A_{k}^{\top} \Pi_{k+1} B_{k}  \left(I + \Sigma_{\rho_{k}}C_{k}\right)^{-1}\mu_{\rho_{k}},\label{eq:residual in mean of Q}\\
        r_{T} = & 0, \label{eq:residual for k=T in mean of Q} \\
        \Sigma_{\pi_{k}^{\rho}}:=&\Sigma_{\rho_{k}}^{1/2}(I+\Sigma_{\rho_{k}}^{1/2}C_{k}\Sigma_{\rho_{k}}^{1/2})^{-1}\Sigma_{\rho_{k}}^{1/2},\label{eq:covariance matrix of optimal policy of MIOCP for fixed prior}\\
        \mu_{\pi_{k}^{\rho}} :=& (I + \Sigma_{\rho_{k}}C_{k})^{-1}\mu_{\rho_{k}}\nonumber \\
        &-\frac{1}{\varepsilon}\Sigma_{\pi_{k}^{\rho}}B_{k}^{\top}\Pi_{k+1}(A_{k}x-r_{k+1}).\label{eq:mean of optimal policy of MIOCP for fixed prior}
    \end{align}
    In addition, if $\mu_{\rho_{k}}=0$ for any $k\in \llbracket0,T-1\rrbracket$, then the above claim holds without the invertibility of $A_{k}$.
    \qedtheorem
    \label{prop:optimal policy of MIOCP for fixed prior}
\end{prop}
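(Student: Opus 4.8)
The plan is to solve Problem \ref{prob:MIOCP} with $\rho$ fixed by backward dynamic programming, using the quadratic cost-to-go ansatz $V_k(x) = \tfrac12(x - r_k)^\top \Pi_k (x - r_k) + c_k$ with terminal data $\Pi_T = F$, $r_T = 0$, $c_T = 0$ read off from the terminal cost $\tfrac12\|x_T\|_F^2$. By Bellman's principle and the additive Markov structure of \eqref{eq:objective function of MIOCP}, at each $k$ it suffices to minimize the per-stage conditional cost over the single conditional measure $\pi_k(\cdot\mid x)$, treating $x_k = x$ as fixed; the overall trajectory cost is then recovered by iterated expectation.

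First I would take the noise expectation: since $w_k$ has zero mean, $\mathbb{E}_{w_k}[V_{k+1}(A_k x + B_k u + w_k)]$ equals $\tfrac12(A_k x + B_k u - r_{k+1})^\top \Pi_{k+1}(A_k x + B_k u - r_{k+1})$ up to the additive constant $\tfrac12\mathrm{Tr}(\Pi_{k+1}\Sigma_{w_k})$. Collecting the $u$-dependent terms into $g(u) := \tfrac12\|u\|_{R_k}^2 + \tfrac12(A_k x + B_k u - r_{k+1})^\top\Pi_{k+1}(A_k x + B_k u - r_{k+1})$, a convex quadratic with Hessian $R_k + B_k^\top\Pi_{k+1}B_k = \varepsilon C_k \succ 0$ (positive definiteness of $C_k$ following from Lemma \ref{lem:positive semidefiniteness of Pi}), the per-stage problem becomes $\min_{\pi_k}\{\mathbb{E}_{\pi_k}[g] + \varepsilon\,\mathcal{D}_{\text{KL}}[\pi_k\|\rho_k]\}$.

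The core step is to solve this by the Gibbs (Donsker--Varadhan) variational principle: for $g$ bounded below, the unique minimizer over all $\pi_k \ll \rho_k$ is the tilted measure $\tfrac{d\pi_k}{d\rho_k} \propto e^{-g/\varepsilon}$, with minimal value $-\varepsilon\log\mathbb{E}_{\rho_k}[e^{-g/\varepsilon}]$, finite because $C_k \succ 0$ makes $e^{-g/\varepsilon}$ integrable against the Gaussian $\rho_k$. This statement is deliberately density-free, which is what allows degenerate $\Sigma_{\rho_k}$. Since $g$ is quadratic and $\rho_k$ Gaussian, the minimizer is Gaussian; computing its parameters (in the nondegenerate case via the identity $(C_k + \Sigma_{\rho_k}^{-1})^{-1} = \Sigma_{\rho_k}^{1/2}(I + \Sigma_{\rho_k}^{1/2}C_k\Sigma_{\rho_k}^{1/2})^{-1}\Sigma_{\rho_k}^{1/2}$, the same Woodbury manipulation behind \eqref{eq:Woodbury matrix identity for Pi}) yields \eqref{eq:covariance matrix of optimal policy of MIOCP for fixed prior} and \eqref{eq:mean of optimal policy of MIOCP for fixed prior}. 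To close the induction I would substitute this policy back and evaluate the Gaussian log-partition: the quadratic-in-$x$ coefficient reproduces the Riccati recursion \eqref{eq:Riccati difference equation}, and writing it as $\Pi_k = A_k^\top P_k A_k$ with $P_k := \Pi_{k+1} - \varepsilon^{-1}\Pi_{k+1}B_k\Sigma_{\pi_k^\rho}B_k^\top\Pi_{k+1}$, the linear-in-$x$ coefficient equals $-\Pi_k r_k$; solving this using $A_k^\top P_k = \Pi_k A_k^{-1}$ and left-multiplying by $\Pi_k^{-1}$ (both inverses existing since $A_k$ is invertible, via Lemma \ref{lem:positive semidefiniteness of Pi}) yields \eqref{eq:residual in mean of Q}. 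Uniqueness of the whole policy follows since the per-stage minimizer is unique and the induction is deterministic.

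The main obstacle is the degenerate case where $\Sigma_{\rho_k}$ is singular: then $\rho_k$ has no PDF and the usual density computation of the tilted Gaussian is unavailable, which is exactly the extension over the nondegenerate treatment of \cite{enami2025mutual}. I would handle it by restricting to the affine support $\mu_{\rho_k} + \mathrm{Im}(\Sigma_{\rho_k})$ through the representation $u = \mu_{\rho_k} + \Sigma_{\rho_k}^{1/2}z$ with $z\sim\mathcal{N}(0,I)$, reducing the tilting to a nondegenerate computation in the reduced coordinates, and then verify that \eqref{eq:covariance matrix of optimal policy of MIOCP for fixed prior}--\eqref{eq:mean of optimal policy of MIOCP for fixed prior} --- which are manifestly well-defined for singular $\Sigma_{\rho_k}$ and satisfy $\mathrm{Im}(\Sigma_{\pi_k^\rho}) \subseteq \mathrm{Im}(\Sigma_{\rho_k})$ --- agree; a continuity argument from nondegenerate priors would be an alternative. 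Finally, when $\mu_{\rho_k} = 0$ for all $k$, the recursion \eqref{eq:residual in mean of Q} together with $r_T = 0$ forces $r_k \equiv 0$, so the linear term vanishes and neither $A_k^{-1}$ nor $\Pi_k^{-1}$ is required, giving the claim without invertibility of $A_k$.
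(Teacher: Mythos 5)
Your proposal is correct and follows essentially the same route as the paper: backward dynamic programming with a quadratic value-function ansatz, the per-stage Gibbs/tilted-measure argument (which is exactly the paper's rearrangement of the stage cost into $\varepsilon\,\mathcal{D}_{\text{KL}}[\pi_{k}(\cdot|x)\,\|\,\rho_{k,Q_{k}}(\cdot|x)/z_{k}]-\varepsilon\log z_{k}$), identification of the tilted Gaussian in a density-free way that survives degenerate $\Sigma_{\rho_{k}}$, and closure of the recursion by matching the log-partition function to the Riccati equation \eqref{eq:Riccati difference equation} and the residual recursion \eqref{eq:residual in mean of Q}. The only cosmetic difference is that the paper identifies the tilted measure via its characteristic function under the representation $u=\mu_{\rho_{k}}+\bar{\Sigma}_{\rho_{k}}v$, whereas you work in reduced coordinates on the support $\mu_{\rho_{k}}+\mathrm{Im}(\Sigma_{\rho_{k}})$ --- the same device in different clothing.
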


Let us provide the relationship between Proposition \ref{prop:optimal policy of MIOCP for fixed prior} and the optimal policies of a linear-quadratic-Gaussian (LQG) problem and an MEOCP.
The terms $\mu_{\rho_{k}}$ and $r_{k}$ in \eqref{eq:mean of optimal policy of MIOCP for fixed prior} are introduced to drive $u_{k}$ closer to $\mu_{\rho_{k}}$.
For simplicity, we assume $\mu_{\rho_{k}}=0$ and $r_{k}=0$ here.
Interestingly, the mean $-\frac{1}{\varepsilon}B_{k}\Sigma_{\pi_{k}^{\rho}}B_{k}^{\top}\Pi_{k+1}A_{k}x$ of the input term $B_{k}u_{k}$ under $\pi^{\rho}$ coincides with the optimal LQG input term for the case where the system matrices are $A_{k}$ and $B_{k}\Sigma_{\rho_{k}}^{1/2}(I+\Sigma_{\rho_{k}}^{1/2}C_{k}\Sigma_{\rho_{k}}^{1/2})^{-1/2}$.
This implies that, for a fixed $\rho$, the optimal MIOCP input can be viewed as the optimal LQG input perturbed by independent additive Gaussian noise $\mathcal{N}(0,I)$ with the input matrix $B_{k}$ distorted by $\rho$.
Moreover, as $\rho_{k}$ approaches a uniform distribution, $\pi^{\rho}$ converges to the optimal policy of the MEOCP as referred to in Section \ref{sec:Problem Formulation}.
In this limit, its mean coincides with the optimal LQG input for the system matrices $A_{k}$ and $B_{k}$ \cite{ito2023maximum}.
Therefore, while the optimal policy of the MEOCP is the optimal LQG input perturbed by additive noise, the MIOCP can be interpreted as regulating the stochasticity of the optimal policy of the MEOCP by adjusting the input matrix.

\begin{rem}
    In fact, the invertibility assumption on $A_{k}$ in Proposition \ref{prop:optimal policy of MIOCP for fixed prior} can be removed without assuming $\mu_{\rho_{k}}=0$.
    In our proof, this assumption arises because we assume the value function takes the form $V(k,x)=\|x-r_{k}\|_{\Pi_{k}}^{2}+(\text{terms independent of } x)$ as in \eqref{eq:definition of value function for k = T} and \eqref{eq:arranged value function for k = T-1}.
    If we instead consider a value function of the form $V(k,x)=\|x\|_{\Pi_{k}}^{2}+\upsilon_{k}^{\top}x+(\text{terms independent of } x)$, we can circumvent the invertibility assumption on $A_{k}$ and prove the same claim as in Proposition \ref{prop:optimal policy of MIOCP for fixed prior}.
    However, we opted to retain the invertibility of $A_{k}$ because adopting the latter value function would complicate the discussion from the beginning of Section \ref{sec:Properties of Optimal Solutions to the MIOCP} to the end of Section \ref{subsec:Simplification of the Prior Class}.
    Furthermore, as stated in Section \ref{subsec:Simplification of the Prior Class}, since we can assume $\mu_{\rho_{k}}=0$ without loss of generality in Problem \ref{prob:MIOCP}, the invertibility assumption on $A_{k}$ in Proposition \ref{prop:optimal policy of MIOCP for fixed prior} does not affect the main results of this paper, which are presented in Section \ref{subsec:Existence} and beyond.
    \qedtheorem
    \label{rem:invertibility of A_k can be avoided}
\end{rem}

\subsection{Optimal Prior for Fixed Policy}\label{subsec:Optimal Prior for Fixed Policy}

Introduce the following policy class.
\begin{align*}
    \mathcal{P}:=\{&\pi = \{\pi_{k}\}_{k=0}^{T-1}\mid \pi_{k}(\cdot|x)=\mathcal{N}(P_{k}x+q_{k},\Sigma_{\pi_{k}}),\\
    &P_{k}\in \mathbb{R}^{m\times n},q_{k} \in \mathbb{R}^{m},\Sigma_{\pi_{k}}\succeq 0,\\
    &\mathrm{Im}(P_{k})\subset \mathrm{Im}(\Sigma_{\pi_{k}})\}.
\end{align*}
Note that $\pi^{\rho} \in \mathcal{P}$ holds for any $\rho \in \mathcal{R}$ from Proposition \ref{prop:optimal policy of MIOCP for fixed prior}.
In addition, let us denote the mean and covariance matrix of the state $x_{k}$ by $\mu_{x_{k}}$ and $\Sigma_{x_{k}}$, respectively.
From \eqref{eq:linear system of MIOCP}--\eqref{eq:initial condition of MIOCP}, $\mu_{x_{k}}$ and $\Sigma_{x_{k}}$ evolve as follows under $\pi\in \mathcal{P}, \pi_{k}(\cdot|x)=\mathcal{N}(P_{k}x + q_{k},\Sigma_{\pi_{k}})$.
\begin{align}
    \mu_{x_{k+1}} =& (A_{k} + B_{k}P_{k})\mu_{x_{k}} + B_{k}q_{k}, k \in \llbracket 0, T-1 \rrbracket ,\label{eq:evolution of mean of state} \\
    \mu_{x_{0}} = &0, \label{eq:initial mean of state} \\
    \Sigma_{x_{k+1}} = &(A_{k} + B_{k}P_{k})\Sigma_{x_{k}} (A_{k} + B_{k}P_{k})^{\top} + B_{k} \Sigma_{\pi_{k}} B_{k}^{\top}\nonumber\\
    &+\Sigma_{w_{k}}, k \in \llbracket 0, T-1 \rrbracket, \label{eq:evolution of covariance matrix of state}\\
    \Sigma_{x_{0}} =& \Sigma_{x_{\text{ini}}}.\label{eq:initial covariance matrix of state}
\end{align}
Then, the optimal prior for a fixed $\pi \in \mathcal{P}$ is given by the following proposition.
See Appendix \ref{app:Proof of Proposition of optimal prior for fixed policy} for the proof.

\begin{prop}
    Consider a given policy $\pi\in \mathcal{P},\pi_{k}(\cdot|x)=\mathcal{N}(P_{k}x+q_{k},\Sigma_{\pi_{k}})$.
    Then, the unique optimal prior $\rho^{\pi}$ of Problem \ref{prob:MIOCP} with the policy fixed to the given $\pi$ is given by
    \begin{align}
        &\rho_{k}^{\pi} = \mathcal{N}(P_{k} \mu_{x_{k}} + q_{k}, \Sigma_{\pi_{k}} + P_{k} \Sigma_{x_{k}} P_{k}^{\top}),k \in \llbracket 0, T-1 \rrbracket. \label{eq:optimal prior for fixed policy}
    \end{align}
    \qedtheorem
    \label{prop:optimal prior for fixed policy}
\end{prop}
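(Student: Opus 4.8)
The plan is to exploit that, with the policy $\pi$ held fixed, each prior $\rho_{k}$ enters the objective $J(\pi,\rho)$ only through the single term $\varepsilon\,\mathbb{E}[\mathcal{D}_{\text{KL}}[\pi_{k}(\cdot|x_{k})\|\rho_{k}]]$. Hence the priors decouple across the time index, and it suffices to minimize this expected KL divergence over $\rho_{k}\in\mathcal{R}$ for each $k\in\llbracket 0,T-1\rrbracket$ separately. The nonnegativity of the cost and KL terms guarantees this per-stage minimization is well posed.

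The core step is a decomposition of the expected KL divergence. Writing $p_{u_{k}}$ for the marginal distribution of $u_{k}$ induced by the fixed policy, I would establish
\begin{align*}
&\mathbb{E}[\mathcal{D}_{\text{KL}}[\pi_{k}(\cdot|x_{k})\|\rho_{k}]]\\
&\quad=\mathbb{E}[\mathcal{D}_{\text{KL}}[\pi_{k}(\cdot|x_{k})\|p_{u_{k}}]]+\mathcal{D}_{\text{KL}}[p_{u_{k}}\|\rho_{k}],
\end{align*}
which follows by expressing the left-hand side as $\mathbb{E}[\log\frac{d\pi_{k}(\cdot|x_{k})}{d\rho_{k}}(u_{k})]$ and factoring the Radon--Nikodym derivative through $p_{u_{k}}$. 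The first term on the right is exactly the mutual information between $x_{k}$ and $u_{k}$ and is independent of $\rho_{k}$, while the second term is nonnegative and vanishes if and only if $\rho_{k}=p_{u_{k}}$. This identifies $\rho_{k}^{\pi}=p_{u_{k}}$ as the unique minimizer.

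The hard part will be justifying this decomposition when $\Sigma_{\pi_{k}}$ is singular, so that the conditionals $\pi_{k}(\cdot|x_{k})$ are degenerate Gaussians lacking PDFs and the density-based argument of \cite{enami2025mutual} is unavailable. Here the constraint $\mathrm{Im}(P_{k})\subset\mathrm{Im}(\Sigma_{\pi_{k}})$ defining $\mathcal{P}$ becomes essential: it forces $P_{k}x_{k}\in\mathrm{Im}(\Sigma_{\pi_{k}})$ for every realization, so that each $\pi_{k}(\cdot|x_{k})$ is supported on the fixed affine subspace $q_{k}+\mathrm{Im}(\Sigma_{\pi_{k}})$, independent of the conditioning value. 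I would use this common support to show that all conditionals and the marginal $p_{u_{k}}$ are mutually absolutely continuous on that subspace, which legitimizes the Radon--Nikodym derivatives appearing above and keeps each KL divergence finite.

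Finally, I would compute $p_{u_{k}}$ explicitly. Since $u_{k}=P_{k}x_{k}+q_{k}+v_{k}$ with $v_{k}\sim\mathcal{N}(0,\Sigma_{\pi_{k}})$ independent of $x_{k}$, and $x_{k}$ is Gaussian with mean $\mu_{x_{k}}$ and covariance $\Sigma_{x_{k}}$ by \eqref{eq:evolution of mean of state}--\eqref{eq:initial covariance matrix of state}, the pair $(x_{k},u_{k})$ is jointly Gaussian. Marginalizing yields $u_{k}\sim\mathcal{N}(P_{k}\mu_{x_{k}}+q_{k},\Sigma_{\pi_{k}}+P_{k}\Sigma_{x_{k}}P_{k}^{\top})$, whose covariance is positive semidefinite so that it belongs to $\mathcal{R}$, giving precisely \eqref{eq:optimal prior for fixed policy}.
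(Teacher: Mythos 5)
Your proof is correct, but it takes a genuinely different route from the paper's. The paper argues variationally: it introduces a Lagrange multiplier $\lambda$ for the normalization constraint $\int_{\mathbb{R}^{m}}d\rho_{k}(u)=1$, computes the first variation of $\mathbb{E}[\mathcal{D}_{\text{KL}}[\pi_{k}(\cdot|x_{k})\|\rho_{k}]]$ with respect to an admissible perturbation $\delta\rho_{k}$ (kept mutually absolutely continuous with $\pi_{k}(\cdot|x)$), reads off the stationarity condition $\rho_{k}^{\pi}=\int_{\mathbb{R}^{n}}\pi_{k}(\cdot|x_{k})\,dp(x_{k})$, and then identifies this mixture as the Gaussian \eqref{eq:optimal prior for fixed policy} via characteristic functions. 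You instead use the chain-rule decomposition $\mathbb{E}[\mathcal{D}_{\text{KL}}[\pi_{k}(\cdot|x_{k})\|\rho_{k}]]=\mathbb{E}[\mathcal{D}_{\text{KL}}[\pi_{k}(\cdot|x_{k})\|p_{u_{k}}]]+\mathcal{D}_{\text{KL}}[p_{u_{k}}\|\rho_{k}]$, which buys you global optimality and uniqueness in one stroke from nonnegativity of the KL divergence and its vanishing iff the arguments coincide; the paper's variational computation, by contrast, delivers only a stationarity condition, and your route also makes explicit the mutual-information interpretation that the paper relegates to Remark \ref{rem:origin of mutual information optimal density control}. The price is justifying the Radon--Nikodym chain rule in the degenerate case, and your common-support argument is exactly what is needed: $\mathrm{Im}(P_{k})\subset\mathrm{Im}(\Sigma_{\pi_{k}})$ places every conditional on the affine subspace $q_{k}+\mathrm{Im}(\Sigma_{\pi_{k}})$, and since $\mathrm{Im}(P_{k}\Sigma_{x_{k}}P_{k}^{\top})\subset\mathrm{Im}(P_{k})\subset\mathrm{Im}(\Sigma_{\pi_{k}})$ the marginal covariance $\Sigma_{\pi_{k}}+P_{k}\Sigma_{x_{k}}P_{k}^{\top}$ has the same image, so $p_{u_{k}}$ lives on that subspace too and is mutually absolutely continuous with every conditional; any prior with finite KL cost must dominate them as well, so the decomposition holds wherever it matters (both sides being $+\infty$ otherwise). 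Both proofs end the same way, computing the marginal of $u_{k}$ under the jointly Gaussian pair $(x_{k},u_{k})$; the paper's characteristic-function calculation is just the rigorous version of your final marginalization step.
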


\subsection{Alternating Optimization Algorithm} \label{subsec:Alternating Optimization of the MIOCP}
On the basis of Propositions \ref{prop:optimal policy of MIOCP for fixed prior} and \ref{prop:optimal prior for fixed policy}, the alternating optimization algorithm for Problem \ref{prob:MIOCP} is given as follows:

\begin{alg}
    \hspace{1pt}
    \begin{description}
        \item[Step 1] Initialize the prior $\rho^{(0)} \in \mathcal{R}_{+}^{*}$.
        \item[Step 2] Calculate the policy $\pi^{(i)} := \pi^{\rho^{(i)}}$. 
        \item[Step 3] Calculate the prior $\rho^{(i+1)} := \rho^{\pi^{(i)}}$ and go back to Step 2. \qedtheorem
    \end{description}
    \label{alg:alternating optimization algorithm for MIOCPs}
\end{alg}

Note that $\mathcal{R}_{+}^{*}\subset \mathcal{R}$ is defined as
\begin{align*}
    \mathcal{R}_{+}^{*}:=\{&\rho=\{\rho_{k}\}_{k=0}^{T-1}\mid \rho_{k}=\mathcal{N}(0,\Sigma_{\rho_{k}}),\Sigma_{\rho_{k}}\succ 0\}.
\end{align*}
From Propositions \ref{prop:optimal policy of MIOCP for fixed prior} and \ref{prop:optimal prior for fixed policy}, $\pi^{\rho} \in \mathcal{P}$ and $\rho^{\pi} \in \mathcal{R}$ holds for $\rho \in \mathcal{R}$ and $\pi \in \mathcal{P}$, respectively.
It hence follows that $\pi^{(i)} \in \mathcal{P}$ and $ \rho^{(i+1)} \in \mathcal{R}$ for any $i \in \mathbb{Z}_{\geq 0}$ due to $\rho^{(0)} \in \mathcal{R}$, and consequently $\pi^{(i)}$ and $\rho^{(i+1)}$ can be exactly computed in Steps 2 and 3 by Propositions \ref{prop:optimal policy of MIOCP for fixed prior} and \ref{prop:optimal prior for fixed policy}, respectively.

\begin{rem}
    In this remark, we discuss the choice of $\rho^{(0)}$.
    As will be shown in Section \ref{subsec:Simplification of the Prior Class}, the prior class $\mathcal{R}$ can be restricted to a smaller class $\mathcal{R}^{*}$, which will be defined as \eqref{eq:simplified prior class}.
    Therefore, we should initialize the prior as $\rho^{(0)} \in \mathcal{R}^{*}$.
    In addition, from Propositions \ref{prop:optimal policy of MIOCP for fixed prior} and \ref{prop:optimal prior for fixed policy}, it follows that
    \begin{align*}
        \mathrm{Im}\left(\Sigma_{\rho_{k}^{(0)}}\right)=\mathrm{Im}\left(\Sigma_{\pi_{k}^{(0)}}\right) = \mathrm{Im}\left(\Sigma_{\rho_{k}^{(1)}}\right) = \cdots,
    \end{align*}
    where $\Sigma_{\rho_{k}^{(i)}}$ and $\Sigma_{\pi_{k}^{(i)}}$ are the covariance matrices of $\rho_{k}^{(i)}$ and $\pi_{k}^{(i)}$, respectively.
    Hence, it is appropriate to choose $\rho^{(0)}$ such that $\Sigma_{\rho_{k}^{(0)}}\succ 0, k \in \llbracket 0,T-1 \rrbracket$ to maximize the admissible range of $\rho^{(i)}$.
    Therefore, we choose $\rho^{(0)}\in \mathcal{R}_{+}^{*}$ in Algorithm \ref{alg:alternating optimization algorithm for MIOCPs}.
    \qedtheorem
    \label{rem:initialize prior appropriately}
\end{rem}

\section{Properties of Optimal Solutions} \label{sec:Properties of Optimal Solutions to the MIOCP}

In this section, we provide properties of the optimal solution to Problem \ref{prob:MIOCP}.
To facilitate the analysis, we eliminate the decision variable $\pi$ by optimizing only $\pi$ for a fixed $\rho\in \mathcal{R}$.
From the proof of Proposition \ref{prop:optimal policy of MIOCP for fixed prior}, we can derive the value function $V(0,x)$, which is defined as \eqref{eq:definition of value function for k < T} and \eqref{eq:definition of value function for k = T}, by following the procedure to calculate \eqref{eq:arranged value function for k = T-1} recursively, and consequently we have
\begin{align}
    &\hspace{-10pt}J(\pi^{\rho},\rho)\nonumber\\
    =& \mathbb{E}[V(0,x_{0})]\nonumber\\
    =&\frac{1}{2}\mathbb{E}\left[\|x_{0}-r_{0}\|_{\Pi_{0}}^{2} + \sum_{k=0}^{T-1}\left\{\|\mu_{\rho_{k}}\|_{\Theta_{k}}^{2}\right. \right.\nonumber\\
    &+\left. \left. \varepsilon \log |I+\bar{\Sigma}_{\rho_{k}}^{\top} C_{k} \bar{\Sigma}_{\rho_{k}}| + \mathrm{Tr}[\Pi_{k+1}\Sigma_{w_{k}}]\right\}\right]\nonumber\\
    =&\frac{1}{2}\left[\|r_{0}\|_{\Pi_{0}}^{2}+\mathrm{Tr}[\Pi_{0}\Sigma_{x_{\text{ini}}}] + \sum_{k=0}^{T-1}\left\{\|\mu_{\rho_{k}}\|_{\Theta_{k}}^{2} \right. \right.\nonumber \\
    &+ \left.\left.\varepsilon \log \frac{|\Sigma_{\rho_{k}}+\Sigma_{Q_{k}}|}{|\Sigma_{Q_{k}}|} + \mathrm{Tr}[\Pi_{k+1}\Sigma_{w_{k}}]\right\}\right] ,\nonumber
\end{align}
where
\begin{align}
    \Sigma_{Q_{k}}:=C_{k}^{-1} = \varepsilon (R_{k}+B_{k}^{\top}\Pi_{k+1}B_{k})^{-1} \label{eq:covariance matrix of Q}
\end{align}
and $\bar{\Sigma}_{\rho_{k}}$ is given by the same way as \eqref{eq:decomposition of positive semidefinite matrix} and \eqref{eq:decomposition of positive semidefinite matrix whtn it is zero matrix}.
Noting that $\Sigma_{Q_{k}}\succ 0$ due to $C_{k}\succ 0$, we have 
\begin{align}
    \left|I+\bar{\Sigma}_{\rho_{k}}^{\top}C_{k}\bar{\Sigma}_{\rho_{k}}\right|=\frac{|\Sigma_{\rho_{k}}+\Sigma_{Q_{k}}|}{|\Sigma_{Q_{k}}|} \label{eq:change of term of logarithm of fraction}
\end{align}
from the matrix determinant lemma \cite[Theorem 18.1.1]{harville1997matrix}.
Therefore, by abusing the notation $J$ as $J(\rho):=J(\pi^{\rho},\rho)$, Problem \ref{prob:MIOCP} can be rewritten as follows.

\begin{prob}
    \begin{align}
        &\min_{\rho\in \mathcal{R}} J(\rho):=\frac{1}{2}\left[\|r_{0}\|_{\Pi_{0}}^{2}+\mathrm{Tr}[\Pi_{0}\Sigma_{x_{\text{ini}}}]\right.\nonumber\\
        &\left.\hspace{53pt}+ \sum_{k=0}^{T-1}\left\{\|\mu_{\rho_{k}}\|_{\Theta_{k}}^{2} + \varepsilon \log \frac{|\Sigma_{\rho_{k}}+\Sigma_{Q_{k}}|}{|\Sigma_{Q_{k}}|}\right.\right.\nonumber\\
        &\left.\left.\hspace{53pt}+ \mathrm{Tr}[\Pi_{k+1}\Sigma_{w_{k}}]\right\}\right]\nonumber\\ 
        &\mbox{s.t. }\eqref{eq:Riccati difference equation},\eqref{eq:terminal condition of Riccati difference equation},\eqref{eq:residual in mean of Q},\eqref{eq:residual for k=T in mean of Q}, \eqref{eq:covariance matrix of Q},\eqref{eq:Theta},\nonumber
    \end{align}
    where $A_{k}$ is assumed to be invertible for any $k \in \llbracket0,T-1 \rrbracket$.
    \qedtheorem
    \label{prob:rewritten MIOCP}
\end{prob}

Note that Problem \ref{prob:rewritten MIOCP} supposes the assumption of Proposition \ref{prop:optimal policy of MIOCP for fixed prior}, that is, the invertibility of $A_{k}$ because Problem \ref{prob:rewritten MIOCP} is derived on the basis of Proposition \ref{prop:optimal policy of MIOCP for fixed prior}.

\subsection{Simplification of the Prior Class} \label{subsec:Simplification of the Prior Class}
This subsection shows that for Problem \ref{prob:rewritten MIOCP}, the prior class $\mathcal{R}$ can be simplified as follows without loss of generality.
\begin{align}
    \mathcal{R}^{*} := \{ & \rho = \{ \rho_{k} \}_{k=0}^{T-1} \mid \nonumber  \\
    &\rho_{k} = \mathcal{N}(0, \Sigma_{\rho_{k}}), \Sigma_{\rho_{k}} \succeq 0 \}. \label{eq:simplified prior class}
\end{align}
Regarding the decision variables of Problem \ref{prob:rewritten MIOCP} as $T$ $m$-dimensional vectors $\{\mu_{\rho_{k}}\}_{k=0}^{T-1}$ and $T$ positive semidefinite matrices $\{\Sigma_{\rho_{k}}\}_{k=0}^{T-1}$, we have the following proposition.

\begin{prop}
     For Problem \ref{prob:rewritten MIOCP} with $\{\Sigma_{\rho_{k}}\}_{k=0}^{T-1}$ fixed, $(\mu_{\rho_{0}}^{\top},\ldots, \mu_{\rho_{T-1}}^{\top})^{\top}=0$ is the unique optimal solution.
     \qedtheorem
     \label{prop:simplification of prior class}
\end{prop}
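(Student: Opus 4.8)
The plan is to fix $\{\Sigma_{\rho_k}\}_{k=0}^{T-1}$ and minimize the objective of Problem \ref{prob:rewritten MIOCP} over the stacked mean vector $\mu := (\mu_{\rho_0}^\top,\ldots,\mu_{\rho_{T-1}}^\top)^\top$ alone. First I would observe that the quantities $\Pi_k$, $C_k$, $\Sigma_{Q_k}$ and $\Theta_k$ are determined entirely by $\{\Sigma_{\rho_k}\}$ and the problem data through \eqref{eq:Riccati difference equation}, \eqref{eq:covariance matrix of Q} and \eqref{eq:Theta}; none of them depends on the means. Consequently, the only $\mu$-dependent terms in $J(\rho)$ are $\|r_0\|_{\Pi_0}^2$ and $\sum_{k=0}^{T-1}\|\mu_{\rho_k}\|_{\Theta_k}^2$, whereas $\mathrm{Tr}[\Pi_0\Sigma_{x_{\text{ini}}}]$, the log-determinant terms, and $\mathrm{Tr}[\Pi_{k+1}\Sigma_{w_k}]$ are constants for this subproblem.

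Next I would unroll the recursion \eqref{eq:residual in mean of Q}--\eqref{eq:residual for k=T in mean of Q}. Since $r_T=0$ and each step applies the affine map $r_k = A_k^{-1}r_{k+1} - \Pi_k^{-1}A_k^\top\Pi_{k+1}B_k(I+\Sigma_{\rho_k}C_k)^{-1}\mu_{\rho_k}$, the terminal residual $r_0$ is a \emph{linear and homogeneous} function of $\mu$, say $r_0=-\Phi\mu$ for a matrix $\Phi$ assembled from the (fixed) $A_k$, $\Pi_k$, $B_k$, $\Sigma_{\rho_k}$, $C_k$. Hence the $\mu$-dependent part of $J$ equals $\tfrac12\mu^\top H\mu$ with $H:=\Phi^\top\Pi_0\Phi+\mathrm{blkdiag}(\Theta_0,\ldots,\Theta_{T-1})$ and, crucially, no linear term. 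Because $\Pi_0\succeq 0$ by Lemma \ref{lem:positive semidefiniteness of Pi} and each $\Theta_k\succeq 0$, we have $H\succeq 0$, so this homogeneous quadratic form is nonnegative and attains its global minimum at $\mu=0$; this proves optimality of the zero means.

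Finally, uniqueness reduces to showing $H\succ 0$, which I expect to be the main obstacle, since $\Phi^\top\Pi_0\Phi$ is generically only positive semidefinite. The cleanest route is to verify $\Theta_k\succ 0$ for each $k$: under the standing invertibility assumption on the $A_k$ in Problem \ref{prob:rewritten MIOCP}, Lemma \ref{lem:positive semidefiniteness of Pi} upgrades $\Pi_k\succeq 0$ to $\Pi_k\succ 0$, and I would combine this with the explicit form of $\Theta_k$ from \eqref{eq:Theta} to conclude $\Theta_k\succ 0$, so that the block-diagonal summand alone forces $H\succ 0$ and hence strict convexity. Should $\Theta_k$ turn out to be merely positive semidefinite, I would instead argue from $\mu^\top H\mu=0$ that $\Phi\mu=0$ (using $\Pi_0\succ 0$) and $\mu_{\rho_k}\in\ker\Theta_k$ for every $k$, and then exploit the block structure of $\Phi$---each block being a product of the invertible factors $A_j^{-1}$ and $\Pi_k^{-1}$ with $A_k^\top\Pi_{k+1}B_k(I+\Sigma_{\rho_k}C_k)^{-1}$---to propagate these constraints back through the recursion and deduce $\mu=0$. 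Either way, $\mu=0$ is the unique minimizer.
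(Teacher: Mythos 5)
Your reduction is sound as far as it goes: with $\{\Sigma_{\rho_k}\}$ fixed, the quantities $\Pi_k$, $C_k$, $\Sigma_{Q_k}$, $\Theta_k$ are constants, $r_0$ is a linear homogeneous function of the stacked mean $\mu$ by \eqref{eq:residual in mean of Q}--\eqref{eq:residual for k=T in mean of Q}, and the mean-dependent part of $J$ is exactly the homogeneous quadratic $\tfrac12\mu^\top H\mu$ with $H=\Phi^\top\Pi_0\Phi+\mathrm{blkdiag}(\Theta_0,\ldots,\Theta_{T-1})$. The genuine gap is the assertion that "each $\Theta_k\succeq 0$," on which both your optimality and your uniqueness arguments rest. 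Nothing in the paper provides this: \eqref{eq:Theta} defines $\Theta_k$ as $\varepsilon C_k-\varepsilon C_k\Sigma_{\pi_k^\rho}C_k$ \emph{minus} the positive semidefinite matrix $(I-C_k\Sigma_{\pi_k^\rho})B_k^\top\Pi_{k+1}A_k\Pi_k^{-1}A_k^\top\Pi_{k+1}B_k(I-\Sigma_{\pi_k^\rho}C_k)$, so its sign is genuinely unclear; Lemma \ref{lem:positive semidefiniteness of Pi} controls only $\Pi_k$, not this difference. The statement $\Theta_k\succ 0$ is in fact true under the standing assumptions (it can be shown by substituting the Woodbury expression for $\Pi_k^{-1}$ into \eqref{eq:Theta} and completing a square, using $R_k\succ 0$ and $\Pi_{k+1}\succ 0$), but that half-page of matrix algebra \emph{is} the mathematical content of the proposition, and your proposal defers it ("I would combine this with the explicit form of $\Theta_k$") rather than carrying it out. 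Your fallback for the merely-semidefinite case does not close the gap either: $\Phi\mu=0$ is the single $n$-dimensional constraint $r_0=0$, and since $B_k$ is \emph{not} assumed to have full column rank in Problem \ref{prob:rewritten MIOCP}, the blocks of $\Phi$ can have nontrivial kernels; hence $\Phi\mu=0$ together with $\mu_{\rho_k}\in\ker\Theta_k$ cannot be propagated to $\mu=0$ unless one already knows $\ker\Theta_k=\{0\}$ --- which is precisely the missing fact.

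For comparison, the paper's proof never touches $\Theta_k$. It returns to the probabilistic form of the cost under $\pi^\rho$ and writes the mean-dependent part of $J$ as a sum of manifestly nonnegative quadratics \eqref{eq:input cost under optimal policy for fixed prior}--\eqref{eq:KL divergence cost under optimal policy for fixed prior}: the expected input cost $\tfrac12\|P_k^\rho\mu_{x_k}+q_k^\rho\|_{R_k}^2$, the terminal cost $\tfrac12\|\mu_{x_T}\|_F^2$, and the KL cost $\tfrac{\varepsilon}{2}\|P_k^\rho\mu_{x_k}+q_k^\rho-\mu_{\rho_k}\|_{\Sigma_{\rho_k}^\dagger}^2$, the last of which requires deriving the KL divergence between mutually absolutely continuous degenerate Gaussians, \eqref{eq:KL divergence of degenerate Gaussians}. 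Optimality of zero means is then immediate, and uniqueness follows because $R_k\succ 0$ forces the mean inputs to vanish, the mean-state recursion \eqref{eq:evolution of mean of state}, \eqref{eq:initial mean of state} then gives $\mu_{x_k}=0$, and the absolute-continuity property $\mu_{\pi_k^\rho}-\mu_{\rho_k}\in\mathrm{Im}(\Sigma_{\rho_k})$ turns $\|\mu_{\rho_k}\|_{\Sigma_{\rho_k}^\dagger}^2=0$ into $\mu_{\rho_k}=0$ even when $\Sigma_{\rho_k}$ is singular. To salvage your route you must actually prove $\Theta_k\succ 0$ (or, equivalently, $H\succ 0$); as written, the central step is assumed, not established.
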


See Appendix \ref{app:Proof of Proposition pf simplification of prior class} for the proof.
On the basis of Proposition \ref{prop:simplification of prior class}, we can restrict the prior class into $\mathcal{R}^{*}$.
Thanks to this simplification and the last claim of Proposition \ref{prop:optimal policy of MIOCP for fixed prior}, Problem \ref{prob:rewritten MIOCP} no longer needs to suppose that $A_{k}$  is invertible for any $k \in \llbracket0,T-1 \rrbracket$ as referred to in Remark \ref{rem:invertibility of A_k can be avoided}.
Henceforth, instead of Problem \ref{prob:rewritten MIOCP}, we analyze the following problem.
\begin{prob}
    \begin{align}
        &\min_{(\Sigma_{\rho_{0}},\ldots,\Sigma_{\rho_{T-1}})\in \mathcal{M}_{T}} \check{J}(\Sigma_{\rho_{0}},\ldots,\Sigma_{\rho_{T-1}})\nonumber\\
        &:=\frac{1}{2}\left[\mathrm{Tr}[\Pi_{0}\Sigma_{x_{\text{ini}}}] \right.\nonumber\\
        &\left.\hspace{15pt}+ \sum_{k=0}^{T-1} \varepsilon \log \frac{|\Sigma_{\rho_{k}}+\Sigma_{Q_{k}}|}{|\Sigma_{Q_{k}}|} + \mathrm{Tr}[\Pi_{k+1}\Sigma_{w_{k}}] \right]\label{eq:objective function of simplified MIOCP}\\
        &\mbox{s.t. }\eqref{eq:Riccati difference equation},\eqref{eq:terminal condition of Riccati difference equation}, \eqref{eq:covariance matrix of Q},\nonumber
    \end{align}
    where $\mathbb{S}_{\succeq 0}^{m}:=\{\Sigma \in \mathbb{S}^{m}|\Sigma\succeq 0\}$ and
    \begin{align*}
        \mathcal{M}_{T}:=\mathbb{S}^{m}_{\succeq 0}\times \cdots \times \mathbb{S}^{m}_{\succeq 0}
    \end{align*}
     is the $T$-fold Cartesian product of $\mathbb{S}_{\succeq 0}^{m}$.
    \qedtheorem
    \label{prob:simplified MIOCP}
\end{prob}

\begin{rem}
    As noted at the beginning of Section \ref{sec:Alternating Optimization of the MIOCP}, in contrast to \cite{enami2025mutual}, this paper considers priors of degenerate Gaussian distributions.
    By this extension, the feasible region $\mathcal{M}_{T}$ of Problem \ref{prob:simplified MIOCP} is a closed set, which is the key to proving the existence of an optimal solution in Section \ref{subsec:Existence}.
    Furthermore, in Sections \ref{subsec:Relation with the Temperature Parameter} and \ref{sec:Properties of the Alternating Optimization Algorithm for the MIOCP}, it enables us to analyze whether the policy is stochastic or deterministic because we can consider a Dirac delta distribution as a degenerate Gaussian distribution with a zero covariance matrix.
    \qedtheorem
    \label{rem:importance of the choice of prior class R}
\end{rem}

\subsection{Existence} \label{subsec:Existence}

This subsection establishes the existence of the optimal solution to Problem \ref{prob:simplified MIOCP}.
As preparation, we introduce some lemmas.
See Appendices \ref{app:Proof of Lemma of inequalites}--\ref{app:Proof of Lemma that J hat is coercive} for the proofs of Lemmas \ref{lem:inequalities for condition where optimal covariance matrices are full-rank}--\ref{lem:J hat is coercive}, respectively.

\begin{lem}
    Define the solution $\check{\Pi}_{k}$ to the following Riccati equation.
    \begin{align}
        \check{\Pi}_{k} = & A_{k}^{\top} \check{\Pi}_{k+1} A_{k} -A_{k}^{\top} \check{\Pi}_{k+1} B_{k} \nonumber \\
        &\times (R_{k} + B_{k}^{\top} \check{\Pi}_{k+1} B_{k})^{-1} B_{k}^{\top} \check{\Pi}_{k+1} A_{k},\nonumber \\
        &k \in \llbracket 0, T-1\rrbracket ,\label{eq:Riccati difference equation for uniform distribution}\\
        \check{\Pi}_{T} = & F. \label{eq:terminal condition of Riccati difference equation for uniform distribution}
    \end{align}
    Then, the solution $\Pi_{k}$ to the Riccati equation \eqref{eq:Riccati difference equation} and \eqref{eq:terminal condition of Riccati difference equation} satisfies that
    \begin{align}
        \hat{\Pi}_{k} \succeq \Pi_{k} \succeq \check{\Pi}_{k} \succeq 0\label{eq:inequality of Pi}
    \end{align}
    for any $k \in \llbracket 0,T \rrbracket$, where
    \begin{align*}
        \hat{\Pi}_{k}:= \begin{cases}
            A_{k}^{\top}\cdots A_{T-1}^{\top} FA_{T-1}\cdots A_{k}, \ &k \in \llbracket0,T-1 \rrbracket ,\\
            F,\ &k=T.
        \end{cases}
    \end{align*}
    In addition, $\Sigma_{Q_{k}}$ satisfies that
    \begin{align}
         \hat{\Sigma}_{Q_{k}} \succeq \Sigma_{Q_{k}} \succeq \check{\Sigma}_{Q_{k}} \succ 0 \label{eq:inequality of covariance matrix of Q}
    \end{align}
    for any $k \in \llbracket 0,T-1 \rrbracket$, where
    \begin{align*}
        \hat{\Sigma}_{Q_{k}}:=& \varepsilon(R_{k} + B_{k}^{\top}\check{\Pi}_{k+1}B_{k})^{-1},\\
        \check{\Sigma}_{Q_{k}} :=& \varepsilon (R_{k} + B_{k}^{\top}\hat{\Pi}_{k+1}B_{k})^{-1}.
    \end{align*}
    \qedtheorem
    \label{lem:inequalities for condition where optimal covariance matrices are full-rank}
\end{lem}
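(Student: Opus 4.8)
The plan is to establish the three-term ordering \eqref{eq:inequality of Pi} by backward induction on $k$ and then to read off \eqref{eq:inequality of covariance matrix of Q} as a direct corollary. I would regard each Riccati recursion as a map on $\mathbb{S}_{\succeq 0}^{m}$: write $\mathcal{T}_{k}(P)$ for the right-hand side of \eqref{eq:Riccati difference equation} with $\Pi_{k+1}$ replaced by a generic $P\succeq 0$ (so that $C_{k}=(R_{k}+B_{k}^{\top}PB_{k})/\varepsilon$), write $\check{\mathcal{T}}_{k}(P)$ for the LQR map in \eqref{eq:Riccati difference equation for uniform distribution}, and set $\hat{\mathcal{T}}_{k}(P):=A_{k}^{\top}PA_{k}$, which generates $\hat{\Pi}_{k}$. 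The base case $k=T$ is immediate since $\hat{\Pi}_{T}=\Pi_{T}=\check{\Pi}_{T}=F\succ 0$, and $\check{\Pi}_{k}\succeq 0$ is the standard LQR fact (using $F\succeq 0$, $R_{k}\succ 0$). The induction rests on two pointwise dominations among these maps, combined with the monotonicity of $\check{\mathcal{T}}_{k}$ and of the congruence $P\mapsto A_{k}^{\top}PA_{k}$.

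For the \emph{upper} bound I would use the Woodbury form \eqref{eq:Woodbury matrix identity for Pi} to write $\mathcal{T}_{k}(P)=A_{k}^{\top}P^{1/2}(I+P^{1/2}M_{k}P^{1/2})^{-1}P^{1/2}A_{k}$ with $M_{k}:=B_{k}\Sigma_{\rho_{k}}^{1/2}(\varepsilon I+\Sigma_{\rho_{k}}^{1/2}R_{k}\Sigma_{\rho_{k}}^{1/2})^{-1}\Sigma_{\rho_{k}}^{1/2}B_{k}^{\top}\succeq 0$. Since $(I+P^{1/2}M_{k}P^{1/2})^{-1}\preceq I$, this yields $\mathcal{T}_{k}(P)\preceq A_{k}^{\top}PA_{k}=\hat{\mathcal{T}}_{k}(P)$ for every $P\succeq 0$. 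Feeding in the inductive hypothesis $\Pi_{k+1}\preceq\hat{\Pi}_{k+1}$ and monotonicity of the congruence then gives $\Pi_{k}=\mathcal{T}_{k}(\Pi_{k+1})\preceq A_{k}^{\top}\Pi_{k+1}A_{k}\preceq A_{k}^{\top}\hat{\Pi}_{k+1}A_{k}=\hat{\Pi}_{k}$.

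For the \emph{lower} bound I would prove the domination $\mathcal{T}_{k}(P)\succeq\check{\mathcal{T}}_{k}(P)$ for all $P\succeq 0$. Both maps share the term $A_{k}^{\top}PA_{k}$, so it suffices to compare the subtracted terms, i.e. to show the ``middle'' inequality
\begin{align*}
\tfrac{1}{\varepsilon}\Sigma_{\rho_{k}}^{1/2}\bigl(I+\Sigma_{\rho_{k}}^{1/2}C_{k}\Sigma_{\rho_{k}}^{1/2}\bigr)^{-1}\Sigma_{\rho_{k}}^{1/2}\preceq\bigl(R_{k}+B_{k}^{\top}PB_{k}\bigr)^{-1},
\end{align*}
because pre- and post-multiplying by $A_{k}^{\top}PB_{k}$ and $B_{k}^{\top}PA_{k}$ preserves $\preceq$ and reverses the sign of the subtracted term. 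Setting $G:=R_{k}+B_{k}^{\top}PB_{k}=\varepsilon C_{k}\succ 0$, $S:=\Sigma_{\rho_{k}}^{1/2}$, and $Y:=\varepsilon^{-1/2}G^{1/2}S$, conjugation of the left-hand side by $G^{1/2}$ equals $Y(I+Y^{\top}Y)^{-1}Y^{\top}$, which by the push-through identity $Y(I+Y^{\top}Y)^{-1}=(I+YY^{\top})^{-1}Y$ becomes $(I+YY^{\top})^{-1}YY^{\top}=I-(I+YY^{\top})^{-1}\preceq I$; conjugating back by $G^{-1/2}$ recovers exactly the claimed inequality. With this in hand, the inductive hypothesis $\Pi_{k+1}\succeq\check{\Pi}_{k+1}$ and monotonicity of $\check{\mathcal{T}}_{k}$ give $\Pi_{k}=\mathcal{T}_{k}(\Pi_{k+1})\succeq\check{\mathcal{T}}_{k}(\Pi_{k+1})\succeq\check{\mathcal{T}}_{k}(\check{\Pi}_{k+1})=\check{\Pi}_{k}$, completing \eqref{eq:inequality of Pi}.

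Finally, \eqref{eq:inequality of covariance matrix of Q} follows immediately: from \eqref{eq:inequality of Pi} and monotonicity of $P\mapsto B_{k}^{\top}PB_{k}$ one gets $R_{k}+B_{k}^{\top}\hat{\Pi}_{k+1}B_{k}\succeq R_{k}+B_{k}^{\top}\Pi_{k+1}B_{k}\succeq R_{k}+B_{k}^{\top}\check{\Pi}_{k+1}B_{k}\succ 0$, and inverting (which reverses the order on positive definite matrices) and scaling by $\varepsilon>0$ yields $\check{\Sigma}_{Q_{k}}\preceq\Sigma_{Q_{k}}\preceq\hat{\Sigma}_{Q_{k}}$ with every term positive definite. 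I expect the genuine obstacle to be the middle inequality of the lower bound, where the push-through identity is the one non-routine manipulation; the monotonicity facts and the entire upper-bound argument are standard.
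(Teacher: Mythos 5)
Your proposal is correct and follows essentially the same route as the paper's proof: backward induction on $k$, the upper bound via the observation that the Riccati map is dominated by the congruence $P\mapsto A_{k}^{\top}PA_{k}$, the lower bound via the key ``middle'' inequality that the $\Sigma_{\rho_{k}}$-regularized subtracted term is dominated by the LQR subtracted term $(R_{k}+B_{k}^{\top}PB_{k})^{-1}$, combined with monotonicity of the LQR Riccati map (which the paper proves explicitly through the function $f_{k}(Y)=Y^{1/2}(I+Y^{1/2}B_{k}R_{k}^{-1}B_{k}^{\top}Y^{1/2})^{-1}Y^{1/2}$), and then reading off \eqref{eq:inequality of covariance matrix of Q} by inverting. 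The only difference is cosmetic: the paper establishes the middle inequality by the Woodbury-type expansion $Y^{1/2}(Y^{1/2}X^{-1}Y^{1/2}+I)^{-1}Y^{1/2}=X-X(X+Y)^{-1}X\preceq X$, whereas you use the push-through identity $Y(I+Y^{\top}Y)^{-1}=(I+YY^{\top})^{-1}Y$ after conjugation, which is an equally valid derivation of the same fact.
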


\begin{lem}
    The function $\check{J}$ is continuous on $\mathcal{M}_{T}$.
    \qedtheorem
    \label{lem:J hat is continuous}
\end{lem}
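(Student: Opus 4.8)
The plan is to exhibit $\check{J}$ as a finite sum and composition of elementary continuous operations on the decision variables $(\Sigma_{\rho_{0}},\ldots,\Sigma_{\rho_{T-1}}) \in \mathcal{M}_{T}$, with every matrix inversion and every logarithm applied only where it is well defined and continuous. The one genuine subtlety is that $\Pi_{k}$ and $\Sigma_{Q_{k}}$ depend on the decision variables only implicitly, through the backward Riccati recursion \eqref{eq:Riccati difference equation}, so their continuity must be established first; everything downstream is then routine.

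First I would prove by backward induction on $k$ that the map $(\Sigma_{\rho_{0}},\ldots,\Sigma_{\rho_{T-1}}) \mapsto \Pi_{k}$ is continuous on $\mathcal{M}_{T}$. The base case $\Pi_{T}=F$ is constant. For the inductive step, assume $\Pi_{k+1}$ is continuous. Then $C_{k}=(R_{k}+B_{k}^{\top}\Pi_{k+1}B_{k})/\varepsilon$ is continuous as a polynomial expression in $\Pi_{k+1}$, and by Lemma \ref{lem:positive semidefiniteness of Pi} it satisfies $C_{k}\succ 0$ throughout $\mathcal{M}_{T}$. To handle the inverse appearing in \eqref{eq:Riccati difference equation}, I would apply the push-through identity to rewrite the correction term as $\frac{1}{\varepsilon}A_{k}^{\top}\Pi_{k+1}B_{k}(I+\Sigma_{\rho_{k}}C_{k})^{-1}\Sigma_{\rho_{k}}B_{k}^{\top}\Pi_{k+1}A_{k}$, which eliminates the square root $\Sigma_{\rho_{k}}^{1/2}$ and leaves only polynomial expressions together with the inverse of $I+\Sigma_{\rho_{k}}C_{k}$. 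Since $\Sigma_{\rho_{k}}C_{k}$ is similar to the positive semidefinite matrix $C_{k}^{1/2}\Sigma_{\rho_{k}}C_{k}^{1/2}$, the eigenvalues of $I+\Sigma_{\rho_{k}}C_{k}$ are bounded below by $1$, so this matrix is invertible everywhere on $\mathcal{M}_{T}$. As matrix inversion is continuous on the open set of invertible matrices, the inverse is continuous, and hence so is $\Pi_{k}$ as a sum and product of continuous maps. (Alternatively, one may keep the square-root form and invoke continuity of $\Sigma\mapsto\Sigma^{1/2}$ on positive semidefinite matrices together with $I+\Sigma_{\rho_{k}}^{1/2}C_{k}\Sigma_{\rho_{k}}^{1/2}\succeq I$.)

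Given continuity of every $\Pi_{k}$, the continuity of $\Sigma_{Q_{k}}=\varepsilon(R_{k}+B_{k}^{\top}\Pi_{k+1}B_{k})^{-1}$ follows at once, since the matrix being inverted is positive definite, hence invertible, throughout $\mathcal{M}_{T}$. It then remains to check the three types of terms in \eqref{eq:objective function of simplified MIOCP}. The trace terms $\mathrm{Tr}[\Pi_{0}\Sigma_{x_{\text{ini}}}]$ and $\mathrm{Tr}[\Pi_{k+1}\Sigma_{w_{k}}]$ are continuous because the trace is linear and each $\Pi_{k}$ is continuous. For the log-determinant term $\varepsilon\log\frac{|\Sigma_{\rho_{k}}+\Sigma_{Q_{k}}|}{|\Sigma_{Q_{k}}|}$, the determinant is a polynomial in the matrix entries and hence continuous; moreover $\Sigma_{Q_{k}}\succ 0$ gives $|\Sigma_{Q_{k}}|>0$, while $\Sigma_{\rho_{k}}+\Sigma_{Q_{k}}\succeq\Sigma_{Q_{k}}\succ 0$ gives $|\Sigma_{\rho_{k}}+\Sigma_{Q_{k}}|>0$, so the argument of the logarithm is strictly positive on all of $\mathcal{M}_{T}$ and the composition with $\log$ is continuous. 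A finite sum of continuous functions being continuous, $\check{J}$ is continuous on $\mathcal{M}_{T}$.

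The main obstacle is precisely the implicit, recursive dependence of $\Pi_{k}$ (and thus of $C_{k}$ and $\Sigma_{Q_{k}}$) on the decision variables: one must verify that every matrix inversion met while unwinding the Riccati recursion acts on a matrix that remains uniformly invertible over the entire feasible set $\mathcal{M}_{T}$, including the degenerate boundary where some $\Sigma_{\rho_{k}}$ are singular. The positivity guarantees of Lemma \ref{lem:positive semidefiniteness of Pi} ($\Pi_{k}\succeq 0$, hence $C_{k}\succ 0$), together with the eigenvalue lower bound on $I+\Sigma_{\rho_{k}}C_{k}$, are exactly what secure this uniform invertibility and thereby allow the otherwise routine composition-of-continuous-functions argument to go through on the closed domain $\mathcal{M}_{T}$.
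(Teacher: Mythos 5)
Your proof is correct, and it takes a route that differs from the paper's in a meaningful way. The paper argues coordinate-wise: it fixes $\Sigma_{\rho_{l}}$, $l\neq k$, and shows continuity of $\check{J}$ in the single variable $\Sigma_{\rho_{k}}$, sweeping $k=0,1,\ldots,T-1$ in turn; along the way it invokes the bounds $\hat{\Sigma}_{Q_{k}}\succeq\Sigma_{Q_{k}}\succeq\check{\Sigma}_{Q_{k}}\succ 0$ of Lemma \ref{lem:inequalities for condition where optimal covariance matrices are full-rank} to keep the log-determinant terms under control as $\Sigma_{Q_{k}}$ varies through $\Pi_{k+1}$. You instead prove \emph{joint} continuity directly, by backward induction on the time index: the map $(\Sigma_{\rho_{0}},\ldots,\Sigma_{\rho_{T-1}})\mapsto\Pi_{k}$ is continuous because the Riccati correction in \eqref{eq:Riccati difference equation} can be rewritten via the push-through identity as $\frac{1}{\varepsilon}A_{k}^{\top}\Pi_{k+1}B_{k}(I+\Sigma_{\rho_{k}}C_{k})^{-1}\Sigma_{\rho_{k}}B_{k}^{\top}\Pi_{k+1}A_{k}$, with $I+\Sigma_{\rho_{k}}C_{k}$ uniformly invertible on $\mathcal{M}_{T}$ (eigenvalues $\geq 1$); everything downstream is then composition of continuous maps, and you need only pointwise positivity of $\Sigma_{\rho_{k}}+\Sigma_{Q_{k}}$ and $\Sigma_{Q_{k}}$, not the uniform bounds of Lemma \ref{lem:inequalities for condition where optimal covariance matrices are full-rank}. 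Two remarks on what each approach buys. First, read literally, the paper's argument establishes separate continuity in each matrix variable, which in general does not imply continuity on the product space $\mathcal{M}_{T}$; since joint continuity is what the Weierstrass-type existence argument behind Proposition \ref{prop:existence of optimal solution} actually requires, your version is the logically tighter one (the paper's conclusion is of course still true, precisely because the compositional structure you exhibit makes $\check{J}$ jointly continuous). Second, your explicit treatment of the inverse — either eliminating $\Sigma_{\rho_{k}}^{1/2}$ via push-through or invoking continuity of the square root on the positive semidefinite cone — fills in the step the paper compresses into the assertion that $\Pi_{0}$ ``can be regarded as a matrix valued continuous function,'' and it makes transparent that nothing breaks on the degenerate boundary of $\mathcal{M}_{T}$ where some $\Sigma_{\rho_{k}}$ are singular, which is exactly the regime this paper's extension of the prior class is designed to cover.
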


\begin{lem}
    The function $\check{J}$ is coercive on $\mathcal{M}_{T}$, that is, $\check{J}\rightarrow \infty$ if there exists at least one $k \in \llbracket 0,T-1 \rrbracket$ such that $\|\Sigma_{\rho_{k}}\| \rightarrow\infty$.
    \qedtheorem
    \label{lem:J hat is coercive}
\end{lem}

Now, combining Lemmas \ref{lem:J hat is continuous} and \ref{lem:J hat is coercive} with \cite[Theorem 4.7]{andreasson2020introduction}, we obtain the following proposition.

\begin{prop}
    Problem \ref{prob:simplified MIOCP} has at least one optimal solution.
    \label{prop:existence of optimal solution}
    \qedtheorem
\end{prop}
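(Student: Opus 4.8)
The plan is to invoke the classical extreme value theorem for coercive continuous functions, which is precisely the content of the cited \cite[Theorem 4.7]{andreasson2020introduction}; the entire argument reduces to verifying the three hypotheses of that theorem for $\check{J}$ on the feasible region $\mathcal{M}_{T}$.

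First I would check that $\mathcal{M}_{T}$ is a nonempty closed subset of $(\mathbb{S}^{m})^{T}$. Nonemptiness is immediate, since for instance $(\Sigma_{\rho_{0}},\ldots,\Sigma_{\rho_{T-1}})=(0,\ldots,0)$ lies in $\mathcal{M}_{T}$. Closedness follows because $\mathbb{S}^{m}_{\succeq 0}$ is closed: it equals $\bigcap_{v\in\mathbb{R}^{m}}\{\Sigma\in\mathbb{S}^{m}\mid v^{\top}\Sigma v\geq 0\}$, an intersection of closed half-spaces, and a finite Cartesian product of closed sets is closed. This is exactly the point emphasized in Remark \ref{rem:importance of the choice of prior class R}: admitting degenerate Gaussians enlarges the open cone of positive definite matrices to its closure, so that the minimization is carried out over a closed set rather than an open one.

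With closedness in hand, the remaining two hypotheses are supplied directly by the lemmas: continuity of $\check{J}$ on $\mathcal{M}_{T}$ by Lemma \ref{lem:J hat is continuous}, and coercivity of $\check{J}$ by Lemma \ref{lem:J hat is coercive}. I would then conclude as follows, which is the substance of \cite[Theorem 4.7]{andreasson2020introduction}: fix any feasible point $\Sigma^{(0)}\in\mathcal{M}_{T}$ and consider the sublevel set $L:=\{\Sigma\in\mathcal{M}_{T}\mid \check{J}(\Sigma)\leq \check{J}(\Sigma^{(0)})\}$. Coercivity forces $L$ to be bounded, while continuity of $\check{J}$ together with closedness of $\mathcal{M}_{T}$ makes $L$ closed; hence $L$ is compact. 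A continuous function attains its minimum on the nonempty compact set $L$, and since $\check{J}>\check{J}(\Sigma^{(0)})$ outside $L$, that minimizer is in fact a global minimizer over all of $\mathcal{M}_{T}$.

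Since continuity and coercivity --- the genuinely substantive inputs --- are already discharged by Lemmas \ref{lem:J hat is continuous} and \ref{lem:J hat is coercive}, the only step that remains here is the closedness of $\mathcal{M}_{T}$, which is elementary. I therefore expect no real obstacle in this proposition itself; the crux of the work lies upstream in establishing the coercivity lemma, where one must control the growth of $\log|\Sigma_{\rho_{k}}+\Sigma_{Q_{k}}|$ as $\|\Sigma_{\rho_{k}}\|\to\infty$ while the Riccati-generated quantities $\Pi_{k}$ and $\Sigma_{Q_{k}}$ themselves depend on the $\Sigma_{\rho_{k}}$, for which the uniform bounds of Lemma \ref{lem:inequalities for condition where optimal covariance matrices are full-rank} are the natural tool.
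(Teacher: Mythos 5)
Your proposal is correct and follows exactly the paper's argument: the paper proves Proposition \ref{prop:existence of optimal solution} by combining Lemma \ref{lem:J hat is continuous} (continuity) and Lemma \ref{lem:J hat is coercive} (coercivity) with the Weierstrass-type result \cite[Theorem 4.7]{andreasson2020introduction}, relying on the closedness of $\mathcal{M}_{T}$ noted in Remark \ref{rem:importance of the choice of prior class R}. Your additional spelling-out of the closedness of $\mathbb{S}^{m}_{\succeq 0}$ and the sublevel-set compactness argument merely makes explicit what the cited theorem encapsulates.
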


\subsection{Relation with the Temperature Parameter}\label{subsec:Relation with the Temperature Parameter}

In this subsection, we derive sufficient conditions on $\varepsilon$ under which the optimal policy is stochastic and deterministic, respectively.
Note that these conditions also serve as sufficient conditions for the optimal policy to be state-dependent and state-independent, respectively.
Using these results, we discuss how to tune $\varepsilon$.

Because it trivially holds that $\pi^{*} = \pi^{\rho^{*}}$ and $\rho^{*} = \rho^{\pi^{*}}$ for any optimal solution $(\pi^{*}, \rho^{*})$ to Problem \ref{prob:MIOCP}, we have $\mathrm{Im}(\Sigma_{\pi_{k}^{*}})=\mathrm{Im}(\Sigma_{\rho_{k}^{*}})$ by Propositions \ref{prop:optimal policy of MIOCP for fixed prior} and \ref{prop:optimal prior for fixed policy}, where $\{\Sigma_{\pi_{k}^{*}}\}_{k=0}^{T-1}$ and $\{\Sigma_{\rho_{k}^{*}}\}_{k=0}^{T-1}$ are the covariance matrices of $\pi^{*}$ and $\rho^{*}$, respectively.
From this observation, we have $\Sigma_{\rho_{k}^{*}}\neq 0 \Leftrightarrow \Sigma_{\pi_{k}^{*}}\neq 0$ and $\Sigma_{\rho_{k}^{*}} = 0 \Leftrightarrow \Sigma_{\pi_{k}^{*}}= 0$, that is, the optimal prior is stochastic (resp. deterministic) if and only if the optimal policy is stochastic (resp. deterministic).
With this in mind, we consider the conditions on $\varepsilon$ under which $\Sigma_{\rho_{k}^{*}}\neq 0$ and $\Sigma_{\rho_{k}^{*}}= 0$.
Note that, according to \eqref{eq:mean of optimal policy of MIOCP for fixed prior}, $\Sigma_{\rho_{k}^{*}}\neq 0$ and $\Sigma_{\rho_{k}^{*}}= 0$ also correspond to the cases where the optimal policy becomes state-dependent and state-independent, respectively.

In this subsection, $\pi$ is implicitly given by $\pi=\pi^{\rho}$ henceforth, and consequently $\Sigma_{x_{k}}$ follows \eqref{eq:evolution of covariance matrix of state} and \eqref{eq:initial covariance matrix of state} under $\pi^{\rho}$.

\subsubsection{Sufficient Condition for Stochastic Optimal Policies}

We derive a sufficient condition where $\Sigma_{\rho_{k}^{*}}\succ 0$.
Let us introduce the following lemma.
For the proof, see Appendix \ref{app:Proof of Lemma of derivative of J_check}.

\begin{lem}
    The directional derivative of $\check{J}$ at $(\bar{\Sigma}_{\rho_{0}},$ $\ldots,\bar{\Sigma}_{\rho_{T-1}}) \in \mathcal{M}_{T}$ in a direction $(S_{0}-\bar{\Sigma}_{\rho_{0}},\ldots,S_{T-1}-\bar{\Sigma}_{\rho_{T-1}})$ is given by
    \begin{align}
        &\lim_{t\rightarrow +0} \left\{\check{J}(\bar{\Sigma}_{\rho_{0}}+t(S_{0}-\bar{\Sigma}_{\rho_{0}}),\ldots,\right. \nonumber\\
        &\left. \bar{\Sigma}_{\rho_{T-1}}+t(S_{T-1}-\bar{\Sigma}_{\rho_{T-1}}))-\check{J}(\bar{\Sigma}_{\rho_{0}},\ldots,\bar{\Sigma}_{\rho_{T-1}})\right\}/t\nonumber\\
        &=\sum_{k=0}^{T-1}\mathrm{Tr}\left[\check{J}_{k}^{\prime}(\bar{\Sigma}_{\rho_{0}},\ldots,\bar{\Sigma}_{\rho_{T-1}})(S_{k}-\bar{\Sigma}_{\rho_{k}})\right],  \label{eq:directional derivative of J_check}
    \end{align}
    where $(S_{0},\ldots,S_{T-1}) \in \mathcal{M}_{T}$ and $\check{J}_{k}^{\prime}:\mathcal{M}_{T}\rightarrow \mathbb{S}_{\succeq 0}^{m}$,
    \begin{align}
        &\check{J}_{k}^{\prime}(\Sigma_{\rho_{0}},\ldots,\Sigma_{\rho_{T-1}})\nonumber\\
        &:= \frac{\varepsilon}{2}L_{k}(\Sigma_{\rho_{k}}+\Sigma_{Q_{k}}-E_{k}\Sigma_{x_{k}}E_{k}^{\top})L_{k}, \label{eq:derivative of J_check}
    \end{align}
    with
    \begin{align}
        E_{k}:=&\Sigma_{Q_{k}} B_{k}^{\top} \Pi_{k+1} A_{k}/ \varepsilon, \label{eq:E} \\
        L_{k}:=&(\Sigma_{Q_{k}} + \Sigma_{\rho_{k}})^{-1}\succ 0.\label{eq:L}
    \end{align}
    \label{lem:derivative of J_check}
    \qedtheorem
\end{lem}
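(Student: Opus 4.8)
The plan is to reduce the one-sided directional derivative to a single trace inner product with a gradient, and then to compute that gradient by exploiting the optimality of $\pi^{\rho}$ rather than by differentiating through the Riccati recursion term by term. First I would show that, although $\check{J}$ in \eqref{eq:objective function of simplified MIOCP} is a priori only defined on the closed cone $\mathcal{M}_{T}$ and its Riccati data \eqref{eq:Riccati difference equation} contain the non-smooth factor $\Sigma_{\rho_{k}}^{1/2}$, the map $(\Sigma_{\rho_{0}},\ldots,\Sigma_{\rho_{T-1}})\mapsto\check{J}$ extends to a smooth ($C^{\infty}$) function on an open neighborhood of $\mathcal{M}_{T}$. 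The key observation is the push-through identity $\Sigma^{1/2}(I+\Sigma^{1/2}M\Sigma^{1/2})^{-1}\Sigma^{1/2}=\Sigma(I+M\Sigma)^{-1}$, valid for $\Sigma\succ0$ and extending by continuity to $\Sigma\succeq0$; with $M=C_{k}\succ0$ the matrix $I+C_{k}\Sigma_{\rho_{k}}$ is invertible for every $\Sigma_{\rho_{k}}\succeq0$, so each Riccati step becomes a rational---hence smooth---map jointly in $(\Pi_{k+1},\Sigma_{\rho_{k}})$, and $\log|\Sigma_{\rho_{k}}+\Sigma_{Q_{k}}|$ is smooth because $\Sigma_{Q_{k}}\succ0$. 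Consequently $\check{J}$ is differentiable at any $\bar{\Sigma}_{\rho}\in\mathcal{M}_{T}$; the segment $\bar{\Sigma}_{\rho_{k}}+t(S_{k}-\bar{\Sigma}_{\rho_{k}})=(1-t)\bar{\Sigma}_{\rho_{k}}+tS_{k}$ stays in $\mathbb{S}_{\succeq0}^{m}$ for $t\in[0,1]$ by convexity of the cone, and the limit in \eqref{eq:directional derivative of J_check} equals $\sum_{k}\mathrm{Tr}[(\partial\check{J}/\partial\Sigma_{\rho_{k}})(S_{k}-\bar{\Sigma}_{\rho_{k}})]$. It then remains to identify $\partial\check{J}/\partial\Sigma_{\rho_{k}}$ with \eqref{eq:derivative of J_check}.

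To compute this gradient I would use that $\check{J}(\rho)=\min_{\pi}J(\pi,\rho)$ with the unique minimizer $\pi^{\rho}$ of Proposition \ref{prop:optimal policy of MIOCP for fixed prior}. By the envelope (Danskin) principle the policy variation contributes nothing at first order, so on the interior $\Sigma_{\rho_{k}}\succ0$ one may differentiate $J(\pi,\rho)$ with respect to $\Sigma_{\rho_{k}}$ holding $\pi=\pi^{\rho}$ fixed. Since $J$ depends on $\Sigma_{\rho_{k}}$ only through the single stage-$k$ term $\varepsilon\,\mathbb{E}[\mathcal{D}_{\mathrm{KL}}[\pi_{k}(\cdot|x_{k})\|\rho_{k}]]$, and on the interior the Gaussian Kullback--Leibler divergence admits its closed form, I would write $\pi_{k}^{\rho}(\cdot|x)=\mathcal{N}(P_{k}x,\Sigma_{\pi_{k}^{\rho}})$ (here $\mu_{\rho_{k}}=0$ forces $r_{k}=0$ and $q_{k}=0$) and take the expectation over the zero-mean state $x_{k}$ with covariance $\Sigma_{x_{k}}$. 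This gives $\frac{\partial\check{J}}{\partial\Sigma_{\rho_{k}}}=\frac{\varepsilon}{2}[\Sigma_{\rho_{k}}^{-1}-\Sigma_{\rho_{k}}^{-1}(\Sigma_{\pi_{k}^{\rho}}+P_{k}\Sigma_{x_{k}}P_{k}^{\top})\Sigma_{\rho_{k}}^{-1}]$, and this is exactly where $\Sigma_{x_{k}}$ enters: it comes from $\mathbb{E}[\|P_{k}x_{k}\|_{\Sigma_{\rho_{k}}^{-1}}^{2}]=\mathrm{Tr}(\Sigma_{\rho_{k}}^{-1}P_{k}\Sigma_{x_{k}}P_{k}^{\top})$, with $\Sigma_{x_{k}}$ evolving under $\pi^{\rho}$ as in \eqref{eq:evolution of covariance matrix of state}.

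It then remains to recast this into \eqref{eq:derivative of J_check} by splitting into the $\Sigma_{x_{k}}$-free and the $\Sigma_{x_{k}}$-dependent parts. For the first, I would use $\Sigma_{\pi_{k}^{\rho}}=(\Sigma_{\rho_{k}}^{-1}+C_{k})^{-1}$ (the push-through simplification of \eqref{eq:covariance matrix of optimal policy of MIOCP for fixed prior}) together with $C_{k}=\Sigma_{Q_{k}}^{-1}$ and the Woodbury identity $\Sigma_{\rho_{k}}-(\Sigma_{\rho_{k}}^{-1}+\Sigma_{Q_{k}}^{-1})^{-1}=\Sigma_{\rho_{k}}L_{k}\Sigma_{\rho_{k}}$, which collapses it to $L_{k}=L_{k}(\Sigma_{\rho_{k}}+\Sigma_{Q_{k}})L_{k}$. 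For the second, I would read off $P_{k}=-\Sigma_{\rho_{k}}L_{k}E_{k}$ from \eqref{eq:mean of optimal policy of MIOCP for fixed prior} and the definition \eqref{eq:E} of $E_{k}$, so that $\Sigma_{\rho_{k}}^{-1}P_{k}=-L_{k}E_{k}$ turns it into $-L_{k}E_{k}\Sigma_{x_{k}}E_{k}^{\top}L_{k}$. Adding the two yields $\frac{\varepsilon}{2}L_{k}(\Sigma_{\rho_{k}}+\Sigma_{Q_{k}}-E_{k}\Sigma_{x_{k}}E_{k}^{\top})L_{k}=\check{J}_{k}^{\prime}$ for $\Sigma_{\rho_{k}}\succ0$. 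Finally, since $\check{J}_{k}^{\prime}$ in \eqref{eq:derivative of J_check} is continuous on all of $\mathcal{M}_{T}$ (because $L_{k}$ in \eqref{eq:L} exists whenever $\Sigma_{Q_{k}}\succ0$) and $\partial\check{J}/\partial\Sigma_{\rho_{k}}$ is continuous by the smoothness of the first step, the two agree on the dense interior and hence everywhere, covering the degenerate priors.

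The main obstacle is the rigorous justification of the envelope step on the interior, namely that the first-order effect of the $\rho$-dependence of the minimizer $\pi^{\rho}$ vanishes; I would secure this from the uniqueness and smooth dependence of $\pi^{\rho}$ on $\rho$ together with stationarity, or else bypass Danskin entirely by differentiating the explicit $\check{J}$ in \eqref{eq:objective function of simplified MIOCP} directly. In the latter, PDF-free route the difficulty migrates to the Riccati sensitivity: perturbing $\Sigma_{\rho_{k}}$ changes $\Pi_{j}$ for all $j\le k$, and one must show that the backward accumulation of these sensitivities, contracted against $\Sigma_{x_{\mathrm{ini}}}$ and the noise covariances $\Sigma_{w_{j}}$, reassembles precisely into the forward state covariance $\Sigma_{x_{k}}$ through the closed-loop transition matrices $A_{j}+B_{j}P_{j}$ --- the forward/backward duality that the envelope argument makes transparent but that nonetheless has to be proved.
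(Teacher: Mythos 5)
Your proposal is correct, but it takes a genuinely different route from the paper's own proof. The paper differentiates the explicit expression \eqref{eq:objective function of simplified MIOCP} directly: it rewrites the Riccati step using \eqref{eq:covariance matrix of Q}, computes $\partial\check{J}/\partial\Sigma_{\rho_{0}}$ by matrix calculus, and for $k\geq 1$ proves precisely the forward/backward duality that you flag as the hard part of this route, namely the differential identity $\mathrm{Tr}[(d\Pi_{0})\Sigma_{x_{\text{ini}}}]+\varepsilon\, d\log\left(|\Sigma_{\rho_{0}}+\Sigma_{Q_{0}}|/|\Sigma_{Q_{0}}|\right)+\mathrm{Tr}[(d\Pi_{1})\Sigma_{w_{0}}]=\mathrm{Tr}[(d\Pi_{1})\Sigma_{x_{1}}]$, applied recursively so that the backward Riccati sensitivities contract into the forward covariance $\Sigma_{x_{k}}$; the directional derivative \eqref{eq:directional derivative of J_check} is then obtained via the mean value theorem along the segment. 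You instead (i) extend $\check{J}$ smoothly to a neighborhood of $\mathcal{M}_{T}$ via the push-through identity, (ii) compute the interior gradient by an envelope argument in which, with $\pi=\pi^{\rho}$ held fixed, only the stage-$k$ Gaussian KL term depends on $\Sigma_{\rho_{k}}$, and (iii) match the result to \eqref{eq:derivative of J_check} by Woodbury and $P_{k}=-\Sigma_{\rho_{k}}L_{k}E_{k}$, finally covering degenerate priors by continuity and density of the interior. Your algebra checks out: $\Sigma_{\rho_{k}}^{-1}-\Sigma_{\rho_{k}}^{-1}\Sigma_{\pi_{k}^{\rho}}\Sigma_{\rho_{k}}^{-1}=L_{k}$ and $\Sigma_{\rho_{k}}^{-1}P_{k}=-L_{k}E_{k}$ indeed reassemble \eqref{eq:derivative of J_check}. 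Your route buys transparency (the appearance of $\Sigma_{x_{k}}$ is immediate from $\mathbb{E}[\|P_{k}x_{k}\|_{\Sigma_{\rho_{k}}^{-1}}^{2}]$) and it actually repairs a point the paper treats lightly: when the whole segment $(1-t)\bar{\Sigma}_{\rho_{k}}+tS_{k}$ lies on the boundary of the cone, the paper's mean-value-theorem step invokes a derivative formula established only on the interior, whereas your smooth extension makes differentiability along such segments explicit. What your route costs is the envelope step: Danskin's theorem in textbook form does not apply (the inner minimization is over a non-compact, infinite-dimensional set), so you must, as you correctly anticipate, argue via uniqueness of $\pi^{\rho}$, its smooth rational dependence on $\{\Sigma_{\rho_{k}}\}$ within the parametric class $\mathcal{P}$, stationarity of the parameters $(P_{k},q_{k},\Sigma_{\pi_{k}})$ at an interior minimizer, and the chain rule; moreover your use of closed-form Gaussian KL is legitimate only on the interior, which is exactly why the final continuity/density step is indispensable --- this is the same degeneracy concern that drove the paper to its PDF-free treatment in the first place.
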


We denote $\Sigma_{w_{-1}}:=\Sigma_{x_{\text{ini}}}\succ 0$ for simplicity of notation.
On the basis of Lemmas \ref{lem:inequalities for condition where optimal covariance matrices are full-rank} and \ref{lem:derivative of J_check}, we obtain the following theorem.

\begin{thm}
    Assume that $A_{k}$ is invertible and $B_{k}$ is full column rank for any $k\in \llbracket 0,T-1 \rrbracket$.
    If we choose $\varepsilon$ such that
    \begin{align}
        \check{M}_{k}:=&\nonumber(R_{k}+B_{k}^{\top}\hat{\Pi}_{k+1}B_{k})^{-1}B_{k}^{\top}\check{\Pi}_{k+1}A_{k}\Sigma_{w_{k-1}}\\
        &\times A_{k}^{\top}\check{\Pi}_{k+1}B_{k} (R_{k}+B_{k}^{\top}\hat{\Pi}_{k+1}B_{k})^{-1}\nonumber \\
        &- \varepsilon(R_{k}+B_{k}^{\top}\check{\Pi}_{k+1}B_{k})^{-1} \succ 0 \label{eq:condition where optimal covariance matrices are full-rank}
    \end{align}
    for any $k  \in \llbracket 0,T-1 \rrbracket$,
    then any optimal solution $\{\Sigma_{\rho_{k}^{*}}\}_{k=0}^{T-1}$ to Problem \ref{prob:simplified MIOCP} satisfies that $\Sigma_{\rho_{k}^{*}}\succ 0$ for any $k \in \llbracket 0,T-1 \rrbracket$.
    \qedtheorem
    \label{thm:condition where optimal covariance matrices are full-rank}
\end{thm}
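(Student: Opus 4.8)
The plan is to use the first-order optimality condition for the convex-looking minimization, exploiting the directional derivative formula \eqref{eq:directional derivative of J_check}. Concretely, suppose for contradiction that some optimal $\Sigma_{\rho_{k}^{*}}$ is \emph{not} positive definite, so that $\Sigma_{\rho_{k}^{*}}$ has a nontrivial kernel; let $v\neq 0$ satisfy $\Sigma_{\rho_{k}^{*}}v=0$. The strategy is to construct a feasible perturbation that strictly decreases $\check{J}$, contradicting optimality. The natural perturbation at index $k$ moves $\Sigma_{\rho_{k}^{*}}$ in the direction $S_{k}-\Sigma_{\rho_{k}^{*}}$ where $S_{k}$ adds a small rank-one (or low-rank) positive semidefinite bump $vv^{\top}$ supported on the kernel, while leaving the other indices fixed ($S_{j}=\Sigma_{\rho_{j}^{*}}$ for $j\neq k$). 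Since $\mathcal{M}_{T}$ is closed under such moves (the bumped matrix stays positive semidefinite), this is an admissible feasible direction, and at an optimum the directional derivative in \eqref{eq:directional derivative of J_check} must be nonnegative.

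The key step is to show that the sign of this directional derivative is controlled by $\check{M}_{k}$, so that \eqref{eq:condition where optimal covariance matrices are full-rank} forces it to be strictly negative on the kernel, yielding the contradiction. From \eqref{eq:derivative of J_check}, the only surviving term of the directional derivative is $\mathrm{Tr}\bigl[\check{J}_{k}^{\prime}(S_{k}-\Sigma_{\rho_{k}^{*}})\bigr]$. On the kernel of $\Sigma_{\rho_{k}^{*}}$ the operator $L_{k}=(\Sigma_{Q_{k}}+\Sigma_{\rho_{k}^{*}})^{-1}$ acts simply, and the bracketed factor $\Sigma_{\rho_{k}^{*}}+\Sigma_{Q_{k}}-E_{k}\Sigma_{x_{k}}E_{k}^{\top}$ restricted there reduces essentially to $\Sigma_{Q_{k}}-E_{k}\Sigma_{x_{k}}E_{k}^{\top}$. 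I would then bound $\Sigma_{x_{k}}\succeq \Sigma_{w_{k-1}}$ from the covariance recursion \eqref{eq:evolution of covariance matrix of state} (every term there is positive semidefinite, and $\Sigma_{w_{k-1}}\succ 0$ is the inhomogeneous part by the convention $\Sigma_{w_{-1}}:=\Sigma_{x_{\mathrm{ini}}}$). Combining this with the two-sided bounds \eqref{eq:inequality of Pi}--\eqref{eq:inequality of covariance matrix of Q} from Lemma \ref{lem:inequalities for condition where optimal covariance matrices are full-rank} — which control $\Sigma_{Q_{k}}$ and $\Pi_{k+1}$, hence $E_{k}$, in terms of the $\rho$-independent matrices $\hat{\Pi}_{k+1},\check{\Pi}_{k+1}$ — the quantity $\Sigma_{Q_{k}}-E_{k}\Sigma_{x_{k}}E_{k}^{\top}$ can be sandwiched so that its negativity is implied by $\check{M}_{k}\succ 0$ after substituting \eqref{eq:E} and the definitions of $\hat{\Sigma}_{Q_{k}},\check{\Sigma}_{Q_{k}}$. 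Then the directional derivative is strictly negative, contradicting optimality, so no such $v$ exists and $\Sigma_{\rho_{k}^{*}}\succ 0$.

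I expect the main obstacle to be the careful sign-chasing in the third step: one must simultaneously use the \emph{upper} bound on $\Sigma_{Q_{k}}$ and the \emph{lower} bounds on $E_{k}\Sigma_{x_{k}}E_{k}^{\top}$ (via $\check{\Pi}_{k+1}\preceq\Pi_{k+1}$ and $\Sigma_{x_{k}}\succeq\Sigma_{w_{k-1}}$) so that the worst case across all feasible optima collapses exactly to the $\rho$-independent expression $\check{M}_{k}$ in \eqref{eq:condition where optimal covariance matrices are full-rank}. The subtlety is that $\Sigma_{Q_{k}}$, $E_{k}$, and $\Sigma_{x_{k}}$ all depend on the (unknown) optimal $\rho$, so the bounds from Lemma \ref{lem:inequalities for condition where optimal covariance matrices are full-rank} are essential to obtain a condition on $\varepsilon$ alone; getting the directions of all the matrix inequalities to align — in particular ensuring the substitution $E_{k}=\Sigma_{Q_{k}}B_{k}^{\top}\Pi_{k+1}A_{k}/\varepsilon$ reproduces the factor $(R_{k}+B_{k}^{\top}\hat{\Pi}_{k+1}B_{k})^{-1}B_{k}^{\top}\check{\Pi}_{k+1}A_{k}$ appearing in \eqref{eq:condition where optimal covariance matrices are full-rank} — is where the full-column-rank assumption on $B_{k}$ and invertibility of $A_{k}$ will be used.
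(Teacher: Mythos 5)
Your overall skeleton is the same as the paper's: a contradiction argument based on the first-order necessary condition, evaluated through the directional-derivative formula \eqref{eq:directional derivative of J_check}--\eqref{eq:derivative of J_check}, with all $\rho$-dependent quantities reduced to the $\varepsilon$-condition on $\check{M}_{k}$ via Lemma \ref{lem:inequalities for condition where optimal covariance matrices are full-rank} and $\Sigma_{x_{k}}\succeq\Sigma_{w_{k-1}}$, which yields $\Sigma_{Q_{k}}-E_{k}\Sigma_{x_{k}}E_{k}^{\top}\preceq-\check{M}_{k}\prec 0$. However, your choice of test direction contains a genuine gap. You perturb $\Sigma_{\rho_{k}^{*}}$ by $vv^{\top}$ with $\Sigma_{\rho_{k}^{*}}v=0$ and assert that, ``on the kernel,'' the bracketed factor reduces to $\Sigma_{Q_{k}}-E_{k}\Sigma_{x_{k}}E_{k}^{\top}$. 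That is not what \eqref{eq:derivative of J_check} gives: the derivative in your direction is
\begin{align*}
\frac{\varepsilon}{2}\,(L_{k}v)^{\top}\left(\Sigma_{\rho_{k}^{*}}+\Sigma_{Q_{k}}-E_{k}\Sigma_{x_{k}}E_{k}^{\top}\right)(L_{k}v),
\end{align*}
and $L_{k}v=(\Sigma_{Q_{k}}+\Sigma_{\rho_{k}^{*}})^{-1}v$ is in general \emph{not} in $\ker\Sigma_{\rho_{k}^{*}}$ (it is only when $\ker\Sigma_{\rho_{k}^{*}}$ is invariant under $\Sigma_{Q_{k}}$). Hence the nonnegative term $(L_{k}v)^{\top}\Sigma_{\rho_{k}^{*}}(L_{k}v)$ does not drop out, and nothing in the hypotheses bounds $\Sigma_{\rho_{k}^{*}}$ pointwise, so this term can dominate the strictly negative contribution of $\Sigma_{Q_{k}}-E_{k}\Sigma_{x_{k}}E_{k}^{\top}$. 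A concrete two-dimensional computation (e.g.\ $\Sigma_{\rho_{k}^{*}}=\mathrm{diag}(1,0)$, $\Sigma_{Q_{k}}$ with strong off-diagonal correlation, and $\Sigma_{Q_{k}}-E_{k}\Sigma_{x_{k}}E_{k}^{\top}=-\delta I$ with $\delta$ small) makes your directional derivative strictly positive even though $\Sigma_{\rho_{k}^{*}}$ is singular, so no contradiction is reached and the proof as proposed collapses at exactly the step you flagged as delicate.

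The step can be repaired in two ways. The paper's route: since $v^{\top}(\Sigma_{\rho_{k}^{*}}+\Sigma_{Q_{k}}-E_{k}\Sigma_{x_{k}}E_{k}^{\top})v<0$, the inner matrix has a negative eigenvalue, and because $L_{k}\succ 0$ acts as a congruence, Sylvester's law of inertia gives a negative eigenvalue $\sigma_{k,m}$ of $L_{k}(\Sigma_{\rho_{k}^{*}}+\Sigma_{Q_{k}}-E_{k}\Sigma_{x_{k}}E_{k}^{\top})L_{k}$; perturbing along the corresponding unit eigenvector $u$, i.e.\ $S_{k}=\Sigma_{\rho_{k}^{*}}+uu^{\top}$, makes the derivative equal to $\frac{\varepsilon}{2}\sigma_{k,m}<0$. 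Alternatively, you can keep your kernel vector $v$ but pre-compose with $L_{k}^{-1}$: since $\Sigma_{\rho_{k}^{*}}v=0$ we have $L_{k}^{-1}v=\Sigma_{Q_{k}}v$, and the feasible direction $S_{k}-\Sigma_{\rho_{k}^{*}}=(\Sigma_{Q_{k}}v)(\Sigma_{Q_{k}}v)^{\top}$ cancels both factors $L_{k}$, giving derivative $\frac{\varepsilon}{2}v^{\top}(\Sigma_{Q_{k}}-E_{k}\Sigma_{x_{k}}E_{k}^{\top})v\leq-\frac{\varepsilon}{2}v^{\top}\check{M}_{k}v<0$. With either repair, the remainder of your plan (the sandwiching via Lemma \ref{lem:inequalities for condition where optimal covariance matrices are full-rank}, which the paper also uses, stated equally tersely) goes through as in the paper.
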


See Appendix \ref{app:Proof of Theorem of condition where optimal covariance matrices are full-rank} for the proof.
Theorem \ref{thm:condition where optimal covariance matrices are full-rank} says that $\varepsilon$ needs to be small to ensure that $\pi^{*}$ is stochastic and state-dependent.
We now give the following remark on the assumptions in Theorem \ref{thm:condition where optimal covariance matrices are full-rank}.

\begin{rem}
    In many cases, $A_k$ of the discrete-time linear system \eqref{eq:linear system of MIOCP} is invertible.
    One such instance is when \eqref{eq:linear system of MIOCP} is obtained from a continuous-time linear system via zero-order hold discretization.
    Note that the invertibility assumption on $A_{k}$ in Theorem \ref{thm:condition where optimal covariance matrices are full-rank} is completely unrelated to that in Proposition \ref{prop:optimal policy of MIOCP for fixed prior}, which was removed in Section \ref{subsec:Simplification of the Prior Class} as mentioned in Remark \ref{rem:invertibility of A_k can be avoided}.
    In addition, it is not restrictive to assume that $B_k$ has full column rank. 
    For example, see \cite[Section 6.2.1]{chen1984linear}.
    \qedtheorem
    \label{rem:validity of assumptions of theorem of condition where optimal covariance matrices are full-rank}
\end{rem}

\subsubsection{Sufficient Condition for Deterministic Optimal Policies} 
Contrary to Theorem \ref{thm:condition where optimal covariance matrices are full-rank}, we will show that $\Sigma_{\rho_{k}^{*}}=0$ when $\varepsilon$ is sufficiently large.

\begin{thm}
    Define the covariance matrix of the state with a zero control input $u_{k}=0,k\in \llbracket0,T-1 \rrbracket$ as
    \begin{align*}
        \Sigma_{x_{k+1}}^{\text{zero}} =& A_{k}\Sigma_{x_{k}}^{\text{zero}}A_{k}^{\top} + \Sigma_{w_{k}}, k \in \llbracket0,T-1 \rrbracket,\\
        \Sigma_{x_{0}}^{\text{zero}}=& \Sigma_{x_{\text{ini}}}.
    \end{align*}
    If we choose $\varepsilon$ such that
    \begin{align}
        \hat{M}_{k}^{\text{zero}}:=&\nonumber(R_{k}+B_{k}^{\top}\check{\Pi}_{k+1}B_{k})^{-1}B_{k}^{\top}\hat{\Pi}_{k+1}A_{k}\Sigma_{x_{k}}^{\text{zero}}\\
        &\times A_{k}^{\top}\hat{\Pi}_{k+1}B_{k} (R_{k}+B_{k}^{\top}\check{\Pi}_{k+1}B_{k})^{-1}\nonumber \\
        &- \varepsilon(R_{k}+B_{k}^{\top}\hat{\Pi}_{k+1}B_{k})^{-1} \prec 0 \label{eq:condition where optimal covariance matrices are 0}
    \end{align}
    for any $k \in \llbracket 0,T-1 \rrbracket$, then the optimal solution $\{\Sigma_{\rho_{k}^{*}}\}_{k=0}^{T-1}$ to Problem \ref{prob:simplified MIOCP} is unique and given by $\Sigma_{\rho_{0}^{*}}= \cdots = \Sigma_{\rho_{T-1}^{*}} =0$.
    \qedtheorem
    \label{thm:condition where optimal covariance matrices are 0}
\end{thm}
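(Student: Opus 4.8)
The plan is to show that under condition \eqref{eq:condition where optimal covariance matrices are 0}, the zero solution $\Sigma_{\rho_{0}}=\cdots=\Sigma_{\rho_{T-1}}=0$ is the unique global minimizer of $\check{J}$ over $\mathcal{M}_{T}$. The existence of a minimizer is already guaranteed by Proposition \ref{prop:existence of optimal solution}, so the work is to rule out any minimizer with some $\Sigma_{\rho_{k}^{*}}\neq 0$. The natural tool is the directional derivative formula of Lemma \ref{lem:derivative of J_check}: a necessary first-order condition at any interior-or-boundary minimizer $(\bar{\Sigma}_{\rho_{0}},\ldots,\bar{\Sigma}_{\rho_{T-1}})$ is that the directional derivative in every feasible direction into $\mathcal{M}_{T}$ is nonnegative. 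The crux will be to argue, via the sign of $\check{J}_{k}^{\prime}$, that at the zero point every feasible direction (necessarily of the form $(S_{0},\ldots,S_{T-1})$ with $S_{k}\succeq 0$) produces a strictly positive derivative, so that moving away from zero strictly increases the cost; conversely, any nonzero candidate admits a descent direction, contradicting optimality.

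Concretely, first I would evaluate $\check{J}_{k}^{\prime}$ at the zero solution. Setting all $\Sigma_{\rho_{k}}=0$, the quantity $L_{k}=(\Sigma_{Q_{k}}+\Sigma_{\rho_{k}})^{-1}=\Sigma_{Q_{k}}^{-1}=C_{k}$ by \eqref{eq:L} and \eqref{eq:covariance matrix of Q}, and the state covariance $\Sigma_{x_{k}}$ collapses to the zero-input covariance $\Sigma_{x_{k}}^{\text{zero}}$, since with $\Sigma_{\rho_{k}}=0$ the policy covariance $\Sigma_{\pi_{k}^{\rho}}$ from \eqref{eq:covariance matrix of optimal policy of MIOCP for fixed prior} also vanishes and \eqref{eq:evolution of covariance matrix of state} reduces to the recursion defining $\Sigma_{x_{k}}^{\text{zero}}$ (using that the deterministic mean feedback does not feed into the covariance once $P_{k}$ acts on a degenerate state—this point must be checked carefully). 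Substituting into \eqref{eq:derivative of J_check} gives
\begin{align}
\check{J}_{k}^{\prime}(0,\ldots,0)=\frac{\varepsilon}{2}\,C_{k}\bigl(\Sigma_{Q_{k}}-E_{k}\Sigma_{x_{k}}^{\text{zero}}E_{k}^{\top}\bigr)C_{k},\nonumber
\end{align}
and since the directional derivative at zero in a direction $S_{k}\succeq 0$ equals $\sum_{k}\mathrm{Tr}[\check{J}_{k}^{\prime}(0,\ldots,0)\,S_{k}]$, strict positivity for all $S_{k}\succeq 0$ (not all zero) is equivalent to $\check{J}_{k}^{\prime}(0,\ldots,0)\succ 0$ for every $k$. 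Using $E_{k}=\Sigma_{Q_{k}}B_{k}^{\top}\Pi_{k+1}A_{k}/\varepsilon$ from \eqref{eq:E} and $C_{k}\Sigma_{Q_{k}}=I$, the condition $\Sigma_{Q_{k}}-E_{k}\Sigma_{x_{k}}^{\text{zero}}E_{k}^{\top}\succ 0$ unwinds to $\varepsilon(R_{k}+B_{k}^{\top}\Pi_{k+1}B_{k})^{-1}\succ (R_{k}+B_{k}^{\top}\Pi_{k+1}B_{k})^{-1}B_{k}^{\top}\Pi_{k+1}A_{k}\Sigma_{x_{k}}^{\text{zero}}A_{k}^{\top}\Pi_{k+1}B_{k}(R_{k}+B_{k}^{\top}\Pi_{k+1}B_{k})^{-1}$, which is exactly the shape of \eqref{eq:condition where optimal covariance matrices are 0} except that the true $\Pi_{k+1}$ appears rather than the bounds $\hat{\Pi}_{k+1},\check{\Pi}_{k+1}$. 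Here I invoke Lemma \ref{lem:inequalities for condition where optimal covariance matrices are full-rank}: since $\check{\Pi}_{k+1}\preceq\Pi_{k+1}\preceq\hat{\Pi}_{k+1}$, replacing $\Pi_{k+1}$ by $\hat{\Pi}_{k+1}$ in the subtracted (positive) term and by $\check{\Pi}_{k+1}$ in the leading $\varepsilon$-term yields a matrix that dominates $\hat{M}_{k}^{\text{zero}}$ in the negation; thus \eqref{eq:condition where optimal covariance matrices are 0}, i.e. $\hat{M}_{k}^{\text{zero}}\prec 0$, implies $\check{J}_{k}^{\prime}(0,\ldots,0)\succ 0$.

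The harder part will be upgrading this first-order statement at zero into a global uniqueness claim, because $\check{J}$ is not obviously convex and the directional-derivative sign was only computed at the single point $0$. My plan is to show more strongly that $\check{J}_{k}^{\prime}(\Sigma_{\rho_{0}},\ldots,\Sigma_{\rho_{T-1}})\succ 0$ holds at every point of $\mathcal{M}_{T}$ under \eqref{eq:condition where optimal covariance matrices are 0}, not merely at the origin. The key monotonicity facts are that increasing any $\Sigma_{\rho_{j}}$ can only increase the state covariances $\Sigma_{x_{k}}$ for $k>j$ and, through the Riccati recursion, the relevant $\Pi$-quantities remain trapped in the same bounds of Lemma \ref{lem:inequalities for condition where optimal covariance matrices are full-rank}; combined with $L_{k}\preceq C_{k}$ (since $\Sigma_{\rho_{k}}\succeq 0$ makes $\Sigma_{Q_{k}}+\Sigma_{\rho_{k}}\succeq\Sigma_{Q_{k}}$), the term $E_{k}\Sigma_{x_{k}}E_{k}^{\top}$ is controlled and the sign of $\check{J}_{k}^{\prime}$ is preserved. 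If $\check{J}_{k}^{\prime}\succ 0$ everywhere, then for any feasible point the directional derivative toward $0$, namely $\sum_{k}\mathrm{Tr}[\check{J}_{k}^{\prime}(-\bar{\Sigma}_{\rho_{k}})]$, is strictly negative whenever some $\bar{\Sigma}_{\rho_{k}}\neq 0$, so the point cannot be a minimizer; hence the minimizer guaranteed by Proposition \ref{prop:existence of optimal solution} must be $0$, and strict positivity of the derivative at $0$ forces uniqueness. The main obstacle I anticipate is rigorously establishing that $\Sigma_{x_{k}}$ feeding into \eqref{eq:derivative of J_check} is the state covariance under $\pi^{\rho}$ and controlling its dependence on the full vector $(\Sigma_{\rho_{0}},\ldots,\Sigma_{\rho_{T-1}})$ uniformly, so that the eigenvalue bounds of Lemma \ref{lem:inequalities for condition where optimal covariance matrices are full-rank} can be applied cleanly to pass from the true $\Pi_{k+1}$ and true $\Sigma_{x_{k}}$ to the computable worst-case bounds $\hat{\Pi}_{k+1},\check{\Pi}_{k+1},\Sigma_{x_{k}}^{\text{zero}}$ appearing in \eqref{eq:condition where optimal covariance matrices are 0}.
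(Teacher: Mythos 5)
Your first step—evaluating $\check{J}_{k}^{\prime}$ at the origin, noting $L_{k}=\Sigma_{Q_{k}}^{-1}$ and $\Sigma_{x_{k}}=\Sigma_{x_{k}}^{\text{zero}}$ there, and using Lemma \ref{lem:inequalities for condition where optimal covariance matrices are full-rank} to convert \eqref{eq:condition where optimal covariance matrices are 0} into positivity of the derivative—is sound and matches the paper's computation. The genuine gap is in your "harder part": the claim that $\check{J}_{k}^{\prime}\succ 0$ at \emph{every} point of $\mathcal{M}_{T}$ under the hypothesis is false. The hypothesis $\hat{M}_{k}^{\text{zero}}\prec 0$ controls $E_{k}\Sigma_{x_{k}}E_{k}^{\top}$ only when $\Sigma_{x_{k}}$ is the \emph{zero-input} covariance. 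At a point of $\mathcal{M}_{T}$ where some earlier $\Sigma_{\rho_{j}}$, $j<k$, is large, the policy $\pi^{\rho}$ injects input noise $B_{j}\Sigma_{\pi_{j}^{\rho}}B_{j}^{\top}$ with $\Sigma_{\pi_{j}^{\rho}}\approx\Sigma_{Q_{j}}=\varepsilon(R_{j}+B_{j}^{\top}\Pi_{j+1}B_{j})^{-1}$, i.e. noise of size $O(\varepsilon)$; precisely in the large-$\varepsilon$ regime of this theorem, $\Sigma_{x_{k}}$ then exceeds $\Sigma_{x_{k}}^{\text{zero}}$ by an $O(\varepsilon)$ amount and $E_{k}\Sigma_{x_{k}}E_{k}^{\top}$ can dominate $\Sigma_{Q_{k}}$. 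Concretely, take $n=m=1$, $T=2$, $A_{k}=B_{k}=R_{k}=F=1$, $\Sigma_{x_{\text{ini}}}=0.01$, $\Sigma_{w_{k}}=1$: the hypothesis holds for $\varepsilon=0.51$ (it requires $\varepsilon>0.505$), yet at $(\Sigma_{\rho_{0}},\Sigma_{\rho_{1}})=(s,0)$ with $s$ large one finds $\Sigma_{Q_{1}}-E_{1}\Sigma_{x_{1}}E_{1}^{\top}\approx \tfrac{3\varepsilon}{8}-\tfrac{\Sigma_{x_{\text{ini}}}}{16}-\tfrac{\Sigma_{w_{0}}}{4}\approx -0.06<0$, so $\check{J}_{1}^{\prime}<0$ there. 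Note also that the monotonicity you invoke (larger $\Sigma_{\rho_{j}}$ giving larger $\Sigma_{x_{k}}$), even where true, works \emph{against} you, since it enlarges the subtracted term $E_{k}\Sigma_{x_{k}}E_{k}^{\top}$ rather than controlling it.

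The missing idea—and what the paper actually does—is a \emph{sequential induction in time carried out at the optimum itself}, which never needs positivity of the gradient away from the relevant points. At $k=0$ the state covariance is policy-independent: $\Sigma_{x_{0}}=\Sigma_{x_{\text{ini}}}=\Sigma_{x_{0}}^{\text{zero}}$ for every $\rho$. So at any optimal $\{\Sigma_{\rho_{k}^{*}}\}$, if $\Sigma_{\rho_{0}^{*}}\neq 0$, the feasible direction $S_{0}=0$ (other blocks held fixed) gives directional derivative $-\tfrac{\varepsilon}{2}\mathrm{Tr}\bigl[\Sigma_{\rho_{0}^{*}}^{1/2}L_{0}(\Sigma_{\rho_{0}^{*}}+\Sigma_{Q_{0}}-E_{0}\Sigma_{x_{0}}^{\text{zero}}E_{0}^{\top})L_{0}\Sigma_{\rho_{0}^{*}}^{1/2}\bigr]\leq -\tfrac{\varepsilon}{2}\mathrm{Tr}\bigl[\Sigma_{\rho_{0}^{*}}^{1/2}L_{0}(\Sigma_{\rho_{0}^{*}}-\hat{M}_{0}^{\text{zero}})L_{0}\Sigma_{\rho_{0}^{*}}^{1/2}\bigr]<0$, contradicting optimality; hence $\Sigma_{\rho_{0}^{*}}=0$. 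This forces $\Sigma_{\pi_{0}^{\rho}}=0$ and $P_{0}=0$, so $\Sigma_{x_{1}}=\Sigma_{x_{1}}^{\text{zero}}$ \emph{at the optimum}, and the identical contradiction argument then applies at $k=1$, and so on recursively through $k=T-1$; this simultaneously yields uniqueness. Your evaluation of $\check{J}_{k}^{\prime}$ and your use of the bounds $\check{\Pi}_{k+1}\preceq\Pi_{k+1}\preceq\hat{\Pi}_{k+1}$ can be reused verbatim inside this induction, but the induction itself cannot be bypassed by a global sign argument.
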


For the proof, see Appendix \ref{app:Proof of Theorem of condition where optimal covariance matrices are 0}.
Theorem \ref{thm:condition where optimal covariance matrices are 0} implies that $\pi^{*}$ is deterministic and state-independent if the temperature parameter is too large.

\begin{rem}
    Proposition \ref{prop:optimal policy of MIOCP for fixed prior} implies that as $\varepsilon \rightarrow 0$, we have $\Sigma_{\pi_{k}^{\rho}} \rightarrow 0$, and the optimal policy converges to a deterministic feedback policy.
    On the other hand, Theorem \ref{thm:condition where optimal covariance matrices are 0} shows that if $\varepsilon$ is too large, the optimal policy becomes a deterministic feedforward policy.
    Although the optimal policy is deterministic in both cases, there is a significant difference in whether it is state-dependent or state-independent.
    \label{rem:difference between zero epsilon and large epsilon}
\end{rem}   

\subsubsection{Rough Descriptions of Theorems \ref{thm:condition where optimal covariance matrices are full-rank} and \ref{thm:condition where optimal covariance matrices are 0}}

To provide intuitive understanding, we give rough descriptions of Theorems \ref{thm:condition where optimal covariance matrices are full-rank} and \ref{thm:condition where optimal covariance matrices are 0}.

Let us first consider Theorem \ref{thm:condition where optimal covariance matrices are full-rank}.
When $\varepsilon$ is small, minimizing the quadratic cost terms in \eqref{eq:objective function of MIOCP} other than the KL cost becomes the primary objective.
If $\Sigma_{\rho_{k}}$ is not positive definite, then according to Remark \ref{rem:initialize prior appropriately}, the realizations of $u_{k}$ are restricted to lie in a subspace of $\mathbb{R}^{m}$, specifically $\mathrm{Im}(\Sigma_{\rho_{k}})$, which is generally unsuitable for minimizing the quadratic cost.
Therefore, the optimal $\Sigma_{\rho_{k}^{*}}$ is expected to be positive definite, satisfying $\mathrm{Im}(\Sigma_{\rho_{k}^{*}}) = \mathbb{R}^{m}$.

Next, we consider Theorem \ref{thm:condition where optimal covariance matrices are 0}.
As $\varepsilon$ becomes large, the KL cost dominates the objective, causing the policy $\pi$ to approach the feedforward prior $\rho$.
Consequently, the optimal policy begins to behave like a feedforward policy.
Since the terms other than the KL cost in \eqref{eq:objective function of MIOCP} are quadratic and the system \eqref{eq:linear system of MIOCP} is linear, the feedforward policy that minimizes the quadratic terms is trivially deterministic.
Therefore, when $\varepsilon$ is large, the optimal policy is expected to be a deterministic feedforward policy.
If the system \eqref{eq:linear system of MIOCP} is unstable, a feedforward policy can not regulate the state covariance $\Sigma_{x_{k}}$, resulting in a large terminal cost $\mathbb{E}[\frac{1}{2}\|x_{T}\|_{F}^{2}]$.
However, when $\varepsilon$ is sufficiently large such that minimizing the KL cost takes priority over reducing the terminal cost, the optimal policy becomes a deterministic feedforward policy.


\subsubsection{Considerations for Tuning the Temperature Parameter}\label{subsubsec:Considerations for Tuning the Temperature Parameter}

In Section \ref{subsubsec:Considerations for Tuning the Temperature Parameter}, we discuss how to choose $\varepsilon$ from two perspectives.

We first consider this from the viewpoint of privacy protection.
As referred to in Section \ref{sec:Problem Formulation}, the KL divergence term in \eqref{eq:objective function of MIOCP} can be rewritten as the mutual information $\mathcal{I}(x_{k},u_{k})$ by optimizing only $\rho$.
Therefore, increasing $\varepsilon$ is beneficial for privacy protection \cite{cundy2024privacy}.
However, this also leads to losing the state dependency of the policy.
The state dependency is particularly crucial in the control of stochastic systems, as a feedforward input that stabilizes a deterministic system may fail to stabilize a stochastic one.
For Problem \ref{prob:MIOCP}, the dependency vanishes completely the moment the condition \eqref{eq:condition where optimal covariance matrices are 0} in Theorem \ref{thm:condition where optimal covariance matrices are 0} is satisfied.
Thus, it is practical to tune $\varepsilon$ to balance state dependency and privacy protection within the range where \eqref{eq:condition where optimal covariance matrices are 0} does not hold.

Next, we examine the tuning of $\varepsilon$ from the perspective of treating MIOCP as an extension of MEOCP.
Recall that in maximum entropy optimal control, the stochasticity of the optimal policy can be intuitively adjusted by $\varepsilon$; increasing $\varepsilon$ brings the policy closer to the uniform distribution and increases its stochasticity.
On the other hand, in mutual information optimal control, the optimal prior changes as $\varepsilon$ is varied, making the tuning of $\varepsilon$ more complex than in maximum entropy optimal control.
For instance, attempting to increase the input stochasticity by using a larger $\varepsilon$, similarly to maximum entropy optimal control, would render the optimal policy deterministic as shown in Theorem \ref{thm:condition where optimal covariance matrices are 0}, thus negating the benefits of input stochasticity unintentionally.
Here, let us consider how to tune $\varepsilon$ to ensure that the input is inherently stochastic.
Theorem \ref{thm:condition where optimal covariance matrices are full-rank} indicates that reducing $\varepsilon$ makes the optimal policy stochastic.
However, Proposition \ref{prop:optimal policy of MIOCP for fixed prior} implies that if $\varepsilon$ becomes too small, the optimal policy actually approaches a deterministic one.
Specifically, as $\varepsilon \rightarrow 0$, we have $\Sigma_{\pi_{k}^{\rho}} \rightarrow 0$, and the stochasticity of the optimal policy is lost.
On the other hand, Theorem \ref{thm:condition where optimal covariance matrices are 0} shows that if $\varepsilon$ is too large, the optimal policy becomes deterministic.
On the basis of these observations, we argue that it is desirable to choose a moderately large $\varepsilon$, meaning large enough to make the optimal policy stochastic to some extent, but not so large as to make the optimal policy deterministic.

While establishing a systematic method for choosing $\varepsilon$ is an important issue, a sophisticated tuning method has not yet been established.
Since developing such a method is beyond the scope of this paper, it is left for future work.

\section{Properties of the Alternating Optimization Algorithm} \label{sec:Properties of the Alternating Optimization Algorithm for the MIOCP}

In this section, we show that the policy computed by Algorithm \ref{alg:alternating optimization algorithm for MIOCPs} is stochastic and deterministic under the same assumptions as Theorems \ref{thm:condition where optimal covariance matrices are full-rank} and \ref{thm:condition where optimal covariance matrices are 0}, respectively.
Consequently, this policy is also state-dependent and state-independent under these conditions, following the same argument as in the second paragraph of Section \ref{subsec:Relation with the Temperature Parameter}.

We provide a general property of Algorithm \ref{alg:alternating optimization algorithm for MIOCPs}.
Let us define a map $\mathcal{A}:\mathcal{R}^{*} \rightarrow \mathcal{R}^{*}, \rho \mapsto \rho^{+}=\arg\min_{\rho^{+}\in \mathcal{R}^{*}}J(\pi^{\rho},\rho^{+})$.
Note that $\mathcal{A}$ satisfies $\rho^{(i+1)}=\mathcal{A}(\rho^{(i)})$ for the sequence $\{\rho^{(i)}\}_{i\in \mathbb{Z}_{\geq 0}}$ generated by Algorithm \ref{alg:alternating optimization algorithm for MIOCPs}.
Using this notation, we have the following proposition.
For the proof, see Appendix \ref{app:Proof of Proposition of algorithm converges to equilibrium point}.

\begin{prop}
    The set $\mathcal{E}$ of all cluster points of the sequence $\{\rho^{(i)}\}_{i \in \mathbb{Z}_{\geq 0}}$ generated by Algorithm \ref{alg:alternating optimization algorithm for MIOCPs} satisfies $\mathcal{E}\subset\{ \rho \in \mathcal{R}^{*} | \rho = \mathcal{A}(\rho)\}$.
    \label{prop:algorithm converges to equilibrium point}
    \qedtheorem
\end{prop}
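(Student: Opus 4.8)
The plan is to treat Algorithm \ref{alg:alternating optimization algorithm for MIOCPs} as a monotone descent scheme and to combine value convergence with the uniqueness of the inner minimizers. First I would record the descent chain: for any $\rho \in \mathcal{R}^{*}$, writing $\rho^{+}=\mathcal{A}(\rho)$, the definition of $\mathcal{A}$ together with Propositions \ref{prop:optimal policy of MIOCP for fixed prior} and \ref{prop:optimal prior for fixed policy} gives
$$J(\rho^{+})=J(\pi^{\rho^{+}},\rho^{+})\le J(\pi^{\rho},\rho^{+})\le J(\pi^{\rho},\rho)=J(\rho),$$
where $J(\rho):=J(\pi^{\rho},\rho)$ is the reduced objective, which on $\mathcal{R}^{*}$ coincides with $\check{J}$ and is therefore continuous by Lemma \ref{lem:J hat is continuous}. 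The first inequality holds because $\pi^{\rho^{+}}$ is optimal for the prior $\rho^{+}$, and the second because $\rho^{+}$ minimizes $J(\pi^{\rho},\cdot)$. Applying this along the iterates shows that $\{J(\rho^{(i)})\}_{i}$ is non-increasing, and since $J\ge 0$ it converges to some limit $J^{\star}$; consequently every subsequence of $\{J(\rho^{(i)})\}_{i}$ converges to $J^{\star}$ as well.

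Next I would pass to a cluster point. Let $\bar{\rho}\in\mathcal{E}$ and pick a subsequence $\rho^{(i_{j})}\to\bar{\rho}$; since $\mathcal{R}^{*}$ (equivalently $\mathcal{M}_{T}$) is closed, $\bar{\rho}\in\mathcal{R}^{*}$. Continuity of $J$ gives $J(\bar{\rho})=\lim_{j}J(\rho^{(i_{j})})=J^{\star}$. The key auxiliary fact is that $\mathcal{A}$ is continuous; granting this, $\rho^{(i_{j}+1)}=\mathcal{A}(\rho^{(i_{j})})\to\mathcal{A}(\bar{\rho})$, and because $\{J(\rho^{(i_{j}+1)})\}_{j}$ is a subsequence of $\{J(\rho^{(i)})\}_{i}$ it also tends to $J^{\star}$, whence continuity of $J$ yields $J(\mathcal{A}(\bar{\rho}))=J^{\star}=J(\bar{\rho})$. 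Feeding $\rho=\bar{\rho}$ into the descent chain, equality of its two ends forces $J(\pi^{\bar{\rho}},\mathcal{A}(\bar{\rho}))=J(\pi^{\bar{\rho}},\bar{\rho})$. Since, by Proposition \ref{prop:optimal prior for fixed policy}, $\mathcal{A}(\bar{\rho})$ is the \emph{unique} minimizer of $J(\pi^{\bar{\rho}},\cdot)$ over $\mathcal{R}^{*}$ and $\bar{\rho}$ attains the same value, uniqueness gives $\bar{\rho}=\mathcal{A}(\bar{\rho})$, i.e. $\bar{\rho}\in\{\rho\in\mathcal{R}^{*}\mid\rho=\mathcal{A}(\rho)\}$, which is exactly the desired inclusion.

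The main obstacle is establishing continuity of $\mathcal{A}$, which is where the degenerate-prior extension of Section \ref{sec:Alternating Optimization of the MIOCP} earns its keep. I would prove it by following the explicit formulas: the Riccati solution $\Pi_{k}$ depends continuously on $(\Sigma_{\rho_{0}},\ldots,\Sigma_{\rho_{T-1}})$ because \eqref{eq:Riccati difference equation} only involves the matrix square root $\Sigma_{\rho_{k}}\mapsto\Sigma_{\rho_{k}}^{1/2}$ (continuous on the PSD cone) and the inverse of $I+\Sigma_{\rho_{k}}^{1/2}C_{k}\Sigma_{\rho_{k}}^{1/2}$, which is always positive definite and hence stays bounded away from singularity. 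Continuity then propagates through $\Sigma_{\pi_{k}^{\rho}}$, the gain and offset extracted from \eqref{eq:covariance matrix of optimal policy of MIOCP for fixed prior}--\eqref{eq:mean of optimal policy of MIOCP for fixed prior}, the moment recursions \eqref{eq:evolution of mean of state}--\eqref{eq:initial covariance matrix of state}, and finally the optimal-prior map of Proposition \ref{prop:optimal prior for fixed policy}, so $\mathcal{A}$ is a composition of continuous maps. The point requiring care is precisely continuity at covariances $\Sigma_{\rho_{k}}$ that are only positive semidefinite: here one must avoid PDF-based expressions (as the paper already does) and rely on these globally invertible inverses and the continuity of the square root, rather than on $\Sigma_{\rho_{k}}^{-1}$. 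I would also note that, because $\bar{\rho}\in\mathcal{R}^{*}$ has zero mean, the optimal prior returned by Proposition \ref{prop:optimal prior for fixed policy} again has zero mean, so $\mathcal{A}$ indeed maps $\mathcal{R}^{*}$ into $\mathcal{R}^{*}$ and the uniqueness argument above is applied within the correct feasible set.
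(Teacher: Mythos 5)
Your proposal is correct and follows essentially the same route as the paper's proof: monotone descent of $J$ along the iterates, convergence of the values $J(\rho^{(i)})$ to a limit, and the uniqueness of the optimal prior for a fixed policy (Proposition \ref{prop:optimal prior for fixed policy}) to force $\bar{\rho}=\mathcal{A}(\bar{\rho})$ at every cluster point. The only notable differences are that you explicitly state and prove the continuity of $\mathcal{A}$ --- a fact the paper's proof uses implicitly in asserting $J(\rho^{(\infty)})=J(\mathcal{A}(\rho^{(\infty)}))=\alpha$, so your version is if anything more complete on this point --- while you omit the coercivity/compactness argument the paper includes, which is needed only to show $\mathcal{E}\neq\emptyset$ and not for the stated inclusion.
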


Proposition \ref{prop:algorithm converges to equilibrium point} ensures that Algorithm \ref{alg:alternating optimization algorithm for MIOCPs} converges to the set of fixed points of Algorithm \ref{alg:alternating optimization algorithm for MIOCPs}.
By \eqref{eq:covariance matrix of optimal policy of MIOCP for fixed prior} and \eqref{eq:optimal prior for fixed policy}, a fixed point $\rho \in \mathcal{R}^{*},\rho_{k}=\mathcal{N}(0,\Sigma_{\rho_{k}})$ satisfies
\begin{align}
    &\mathcal{A}(\rho)=\rho \nonumber\\
    &\Leftrightarrow\Sigma_{\rho_{k}}L_{k} \left(E_{k}\Sigma_{x_{k}}E_{k}^{\top} -  \Sigma_{\rho_{k}} -\Sigma_{Q_{k}}\right)L_{k}\Sigma_{\rho_{k}}=0,\label{eq:equation that equilibrium points satisfy}\\
    &k \in \llbracket0,T-1 \rrbracket.\nonumber
\end{align}

On the basis of Proposition \ref{prop:algorithm converges to equilibrium point}, we show that the policy computed by Algorithm \ref{alg:alternating optimization algorithm for MIOCPs} converges to a stochastic feedback one under the same assumptions as Theorem \ref{thm:condition where optimal covariance matrices are full-rank}.

\begin{thm}
    Suppose the same assumptions as Theorem \ref{thm:condition where optimal covariance matrices are full-rank}.
    If we choose $\varepsilon$ such that $\check{M}_{k}\succ 0$ for any $k \in \llbracket0,T-1 \rrbracket$, then the sequence $\{\rho^{(i)}\}_{i\in \mathbb{Z}_{\geq 0}}$ generated by Algorithm \ref{alg:alternating optimization algorithm for MIOCPs} converges to $\{\rho \in \mathcal{E}\mid \rho_{k}=\mathcal{N}(0,\Sigma_{\rho_{k}}), \Sigma_{\rho_{k}}\neq 0, k \in \llbracket0,T-1 \rrbracket\}$. 
    \label{thm:condition where algorithm converges to nonzero covariance matrix}
    \qedtheorem
\end{thm}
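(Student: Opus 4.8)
The plan is to combine the structural description of the algorithm's limit set from Proposition \ref{prop:algorithm converges to equilibrium point} with two quantitative estimates on the per-step prior-covariance update, both powered by the hypothesis $\check{M}_{k}\succ 0$. By Proposition \ref{prop:algorithm converges to equilibrium point} the cluster-point set $\mathcal{E}$ is nonempty and consists of fixed points satisfying \eqref{eq:equation that equilibrium points satisfy}; moreover, since $\rho^{(0)}\in\mathcal{R}_{+}^{*}$, Remark \ref{rem:initialize prior appropriately} guarantees $\Sigma_{\rho_{k}^{(i)}}\succ 0$ for every $i$ and $k$. Because \eqref{eq:equation that equilibrium points satisfy} holds trivially at $\Sigma_{\rho_{k}}=0$, the content of the theorem is precisely to exclude $\Sigma_{\rho_{k}^{(\infty)}}=0$ at cluster points; it therefore suffices to prove, for each fixed $k$, the uniform trace bound $\inf_{i}\mathrm{Tr}[\Sigma_{\rho_{k}^{(i)}}]>0$, which immediately forces $\mathrm{Tr}[\Sigma_{\rho_{k}^{(\infty)}}]>0$, i.e.\ $\Sigma_{\rho_{k}^{(\infty)}}\neq 0$, for every cluster point.

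The first thing I would record is the exact one-step update of the prior covariance. Writing $\pi^{(i)}=\pi^{\rho^{(i)}}$, Propositions \ref{prop:optimal policy of MIOCP for fixed prior} and \ref{prop:optimal prior for fixed policy} give $\Sigma_{\rho_{k}^{(i+1)}}=\Sigma_{\pi_{k}^{(i)}}+P_{k}^{(i)}\Sigma_{x_{k}}^{(i)}P_{k}^{(i)\top}$, and a short manipulation using \eqref{eq:covariance matrix of optimal policy of MIOCP for fixed prior} and \eqref{eq:derivative of J_check}--\eqref{eq:L} rewrites this as $\Sigma_{\rho_{k}^{(i+1)}}=\Sigma_{\rho_{k}^{(i)}}-\tfrac{2}{\varepsilon}\Sigma_{\rho_{k}^{(i)}}\check{J}_{k}^{\prime}(\rho^{(i)})\Sigma_{\rho_{k}^{(i)}}$, which is the identity behind \eqref{eq:equation that equilibrium points satisfy}. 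From this I extract two estimates. \emph{Increase estimate}: by Lemma \ref{lem:inequalities for condition where optimal covariance matrices are full-rank} together with the universal bound $\Sigma_{x_{k}}\succeq\Sigma_{w_{k-1}}$ implied by \eqref{eq:evolution of covariance matrix of state}, the quantity $E_{k}\Sigma_{x_{k}}E_{k}^{\top}-\Sigma_{Q_{k}}$ admits the prior-independent lower bound $\check{M}_{k}$ at every $\rho\in\mathcal{R}^{*}$; hence whenever $\Sigma_{\rho_{k}^{(i)}}\preceq\check{M}_{k}$ we have $\check{J}_{k}^{\prime}(\rho^{(i)})\preceq 0$ by \eqref{eq:derivative of J_check}, and the update identity yields $\Sigma_{\rho_{k}^{(i+1)}}\succeq\Sigma_{\rho_{k}^{(i)}}$. \emph{No-collapse estimate}: since $P_{k}^{(i)}\Sigma_{x_{k}}^{(i)}P_{k}^{(i)\top}\succeq 0$ and $\Sigma_{\pi_{k}^{(i)}}=(\Sigma_{\rho_{k}^{(i)}}^{-1}+\Sigma_{Q_{k}}^{-1})^{-1}$, using $\Sigma_{Q_{k}}^{-1}\preceq\nu_{k}I$ (uniform, from \eqref{eq:inequality of covariance matrix of Q}) gives $\lambda_{\max}(\Sigma_{\rho_{k}^{(i+1)}})\geq\lambda_{\max}(\Sigma_{\rho_{k}^{(i)}})/(1+\nu_{k}\lambda_{\max}(\Sigma_{\rho_{k}^{(i)}}))$, so a single step cannot drop the covariance from ``not small'' to ``near zero.''

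I would then assemble the trace bound by a backward-tracking argument. Set $\phi:=\lambda_{\min}(\check{M}_{k})>0$; if $\mathrm{Tr}[\Sigma_{\rho_{k}^{(i)}}]\leq\phi$ then $\Sigma_{\rho_{k}^{(i)}}\preceq\check{M}_{k}$, so the increase estimate applies and the trace is nondecreasing along any run of indices on which it stays $\leq\phi$. For an index $i$ with $\mathrm{Tr}[\Sigma_{\rho_{k}^{(i)}}]\leq\phi$, look back to the last index $i'<i$ with $\mathrm{Tr}[\Sigma_{\rho_{k}^{(i')}}]>\phi$: if none exists the trace has only increased from step $0$, giving $\mathrm{Tr}[\Sigma_{\rho_{k}^{(i)}}]\geq\mathrm{Tr}[\Sigma_{\rho_{k}^{(0)}}]>0$; if $i'$ exists, the no-collapse estimate applied at $i'\to i'+1$ bounds $\mathrm{Tr}[\Sigma_{\rho_{k}^{(i'+1)}}]$ from below by $(\phi/m)/(1+\nu_{k}\phi/m)>0$, and the increase estimate propagates this bound up to index $i$ because the trace stays $\leq\phi$ on $(i',i]$. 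Either way $\mathrm{Tr}[\Sigma_{\rho_{k}^{(i)}}]$ is bounded below by a fixed positive constant, so $\inf_{i}\mathrm{Tr}[\Sigma_{\rho_{k}^{(i)}}]>0$ and the theorem follows.

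The main obstacle I anticipate is the prior-independent lower bound $E_{k}\Sigma_{x_{k}}E_{k}^{\top}-\Sigma_{Q_{k}}\succeq\check{M}_{k}$ used in the increase estimate. Reducing it via $\Sigma_{x_{k}}\succeq\Sigma_{w_{k-1}}$ and $\Sigma_{Q_{k}}\preceq\hat{\Sigma}_{Q_{k}}$ is immediate, but the remaining step needs the matrix monotonicity of the feedback gain, namely $E_{k}\Sigma_{w_{k-1}}E_{k}^{\top}\succeq\check{E}_{k}\Sigma_{w_{k-1}}\check{E}_{k}^{\top}$ with $\check{E}_{k}:=(R_{k}+B_{k}^{\top}\hat{\Pi}_{k+1}B_{k})^{-1}B_{k}^{\top}\check{\Pi}_{k+1}A_{k}$, which is delicate because $M\mapsto B_{k}^{\top}MA_{k}\Sigma A_{k}^{\top}MB_{k}$ is not operator monotone in general. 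This is exactly the estimate underlying Theorem \ref{thm:condition where optimal covariance matrices are full-rank} (the placement of $\hat{\Pi}_{k+1}$ in the normalization and $\check{\Pi}_{k+1}$ in the numerator is what makes the bound valid, as one checks directly in the scalar case), so I would invoke it from there via Lemma \ref{lem:inequalities for condition where optimal covariance matrices are full-rank}. A secondary subtlety is making the trace argument insensitive to the sequence repeatedly entering and leaving $\{\Sigma_{\rho_{k}}\preceq\check{M}_{k}\}$; the no-collapse estimate is precisely what closes this gap, since it forbids the one-step descent into a neighborhood of $0$ that the increase estimate alone could not exclude.
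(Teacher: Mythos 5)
Your proof is correct, and it shares the paper's overall skeleton: reduce via Proposition \ref{prop:algorithm converges to equilibrium point} to establishing a uniform positive lower bound on the prior covariances along the iterations, use the one-step identity $\Sigma_{\rho_{k}}^{+}-\Sigma_{\rho_{k}}=\Sigma_{\rho_{k}}L_{k}(E_{k}\Sigma_{x_{k}}E_{k}^{\top}-\Sigma_{\rho_{k}}-\Sigma_{Q_{k}})L_{k}\Sigma_{\rho_{k}}$, and split on whether $\Sigma_{\rho_{k}}$ is dominated by $\check{M}_{k}$. In the ``small'' regime your increase estimate coincides with the paper's first case, and both arguments hinge on the same uniform bound $E_{k}\Sigma_{x_{k}}E_{k}^{\top}-\Sigma_{Q_{k}}\succeq\check{M}_{k}$ (via $\Sigma_{x_{k}}\succeq\Sigma_{w_{k-1}}$ and Lemma \ref{lem:inequalities for condition where optimal covariance matrices are full-rank}); you are right to flag this as the delicate point, and treating it as inherited from Theorem \ref{thm:condition where optimal covariance matrices are full-rank} puts you on exactly the same footing as the paper, whose own proof invokes it without further justification. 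Where you genuinely depart is the ``not small'' regime. The paper there derives $\Sigma_{\rho_{k}}^{+}\succeq\Sigma_{\rho_{k}}L_{k}(\check{M}_{k}+\Sigma_{Q_{k}})L_{k}\Sigma_{\rho_{k}}\succeq\gamma_{k}^{\prime}\Sigma_{\rho_{k}}\Sigma_{\rho_{k}}$, where $\gamma_{k}^{\prime}>0$ is manufactured from the level-set bound $\Sigma_{\rho_{k}^{(i)}}\preceq\kappa I$ (compactness, via coercivity) plus a further sandwiching of $L_{k}$ between constant matrices, and concludes $\|\Sigma_{\rho_{k}}^{+}\|\geq\gamma_{k}^{2}\gamma_{k}^{\prime}$ in a single step, so a one-line induction on $\|\Sigma_{\rho_{k}}^{(i)}\|$ finishes. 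Your ``no-collapse'' estimate replaces all of this with the structural inequality $\Sigma_{\rho_{k}}^{+}=\Sigma_{\pi_{k}}+P_{k}\Sigma_{x_{k}}P_{k}^{\top}\succeq\Sigma_{\pi_{k}}=(\Sigma_{\rho_{k}}^{-1}+\Sigma_{Q_{k}}^{-1})^{-1}$, immediate from Propositions \ref{prop:optimal policy of MIOCP for fixed prior} and \ref{prop:optimal prior for fixed policy} and needing only $\Sigma_{Q_{k}}\succeq\check{\Sigma}_{Q_{k}}\succ 0$ from Lemma \ref{lem:inequalities for condition where optimal covariance matrices are full-rank}. This buys you independence from the compactness constant $\kappa$ and avoids the paper's additional non-commuting sandwich step, at the price of heavier bookkeeping: the backward-tracking argument on traces, which I checked and which closes correctly ($\mathrm{Tr}[\Sigma_{\rho_{k}}]\leq\lambda_{\min}(\check{M}_{k})$ forces $\Sigma_{\rho_{k}}\preceq\check{M}_{k}$, monotonicity propagates the bound inside that region, and the $\lambda_{\max}$ recursion forbids a one-step collapse upon entering it). Both routes prove the same conclusion; yours is marginally more self-contained in the second case, the paper's is shorter in the final assembly.
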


For the proof, see Appendix \ref{app:Proof of Theorem of condition where algorithm converges to nonzero covariance matrix}.
Note that Theorems \ref{thm:condition where optimal covariance matrices are full-rank} and \ref{thm:condition where algorithm converges to nonzero covariance matrix} are slightly different: Theorem \ref{thm:condition where optimal covariance matrices are full-rank} shows the covariance matrix of the optimal policy is positive definite, whereas Theorem \ref{thm:condition where algorithm converges to nonzero covariance matrix} shows that the covariance matrix of the policy computed by Algorithm \ref{alg:alternating optimization algorithm for MIOCPs} is not a zero matrix.

Next, we show that the policy computed by Algorithm \ref{alg:alternating optimization algorithm for MIOCPs} converges to a deterministic feedforward one when $\varepsilon$ is chosen as specified in Theorem \ref{thm:condition where optimal covariance matrices are 0}.
For the proof, see Appendix \ref{app:Proof of Theorem of condition where algorithm converges to zero covariance matrix} .
\begin{thm}
    If we choose $\varepsilon$ such that $\hat{M}_{k}^{\text{zero}}\prec 0$ for any $k \in \llbracket0,T-1 \rrbracket$, then the sequence $\{\rho^{(i)}\}_{i\in \mathbb{Z}_{\geq 0}}$ generated by Algorithm \ref{alg:alternating optimization algorithm for MIOCPs} converges to $\{\rho \in \mathcal{E}\mid \rho_{k}=\mathcal{N}(0,0)\}$. 
    \qedtheorem
    \label{thm:condition where algorithm converges to zero covariance matrix}
\end{thm}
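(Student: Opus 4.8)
The plan is to deduce the result from the general convergence property already established in Proposition~\ref{prop:algorithm converges to equilibrium point}, together with a complete characterization of the fixed points of $\mathcal{A}$ under the hypothesis $\hat{M}_{k}^{\text{zero}}\prec 0$. Recall from the proof of Proposition~\ref{prop:algorithm converges to equilibrium point} that, identifying $\rho^{(i)}$ with $(\Sigma_{\rho_{0}^{(i)}},\ldots,\Sigma_{\rho_{T-1}^{(i)}})$, the sequence lies in a bounded level set of the coercive function $\check{J}$, hence in a compact subset of $\mathcal{M}_{T}$; its cluster-point set $\mathcal{E}$ is therefore nonempty and satisfies $\mathcal{E}\subset\{\rho\in\mathcal{R}^{*}\mid\rho=\mathcal{A}(\rho)\}$, i.e. every cluster point solves the fixed-point relation \eqref{eq:equation that equilibrium points satisfy}. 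Since a bounded sequence whose cluster points all coincide with a single point must converge to that point, it suffices to prove that every solution of \eqref{eq:equation that equilibrium points satisfy} has $\Sigma_{\rho_{k}}=0$ for all $k\in\llbracket 0,T-1\rrbracket$.

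I would prove this by a forward induction on $k$ that simultaneously maintains $\Sigma_{x_{k}}\preceq\Sigma_{x_{k}}^{\text{zero}}$ and forces $\Sigma_{\rho_{k}}=0$ at a fixed point. The base case holds because $\Sigma_{x_{0}}=\Sigma_{x_{\text{ini}}}=\Sigma_{x_{0}}^{\text{zero}}$ independently of the policy. For the inductive step, assuming $\Sigma_{x_{k}}\preceq\Sigma_{x_{k}}^{\text{zero}}$, the congruence $E_{k}\Sigma_{x_{k}}E_{k}^{\top}\preceq E_{k}\Sigma_{x_{k}}^{\text{zero}}E_{k}^{\top}$ holds, and replacing the actual Riccati matrix $\Pi_{k+1}$ by its envelopes $\check{\Pi}_{k+1}\preceq\Pi_{k+1}\preceq\hat{\Pi}_{k+1}$ from Lemma~\ref{lem:inequalities for condition where optimal covariance matrices are full-rank} gives, by the same bounding as in Theorem~\ref{thm:condition where optimal covariance matrices are 0}, the uniform upper bound $E_{k}\Sigma_{x_{k}}E_{k}^{\top}-\Sigma_{Q_{k}}\preceq\hat{M}_{k}^{\text{zero}}\prec 0$. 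Consequently $N_{k}:=E_{k}\Sigma_{x_{k}}E_{k}^{\top}-\Sigma_{\rho_{k}}-\Sigma_{Q_{k}}\prec 0$. Rewriting \eqref{eq:equation that equilibrium points satisfy} as $(\Sigma_{\rho_{k}}L_{k})N_{k}(\Sigma_{\rho_{k}}L_{k})^{\top}=0$ and using $N_{k}\prec 0$ together with $L_{k}\succ 0$, I conclude that $\Sigma_{\rho_{k}}L_{k}=0$, hence $\Sigma_{\rho_{k}}=0$.

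To close the induction I would propagate the state-covariance bound. Once $\Sigma_{\rho_{k}}=0$, the formulas \eqref{eq:covariance matrix of optimal policy of MIOCP for fixed prior} and \eqref{eq:mean of optimal policy of MIOCP for fixed prior} give $\Sigma_{\pi_{k}}=0$ and $P_{k}=0$, so the $k$-th policy is the deterministic feedforward $u_{k}=0$; then \eqref{eq:evolution of covariance matrix of state} collapses to $\Sigma_{x_{k+1}}=A_{k}\Sigma_{x_{k}}A_{k}^{\top}+\Sigma_{w_{k}}\preceq A_{k}\Sigma_{x_{k}}^{\text{zero}}A_{k}^{\top}+\Sigma_{w_{k}}=\Sigma_{x_{k+1}}^{\text{zero}}$, which re-establishes the inductive hypothesis at $k+1$. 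Therefore every fixed point has $\Sigma_{\rho_{0}}=\cdots=\Sigma_{\rho_{T-1}}=0$, so $\mathcal{E}=\{\rho\in\mathcal{E}\mid\rho_{k}=\mathcal{N}(0,0)\}$ and the sequence converges to $\{\rho\in\mathcal{E}\mid\rho_{k}=\mathcal{N}(0,0)\}$, as claimed.

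The main obstacle I anticipate is the bound $\Sigma_{x_{k}}\preceq\Sigma_{x_{k}}^{\text{zero}}$. This inequality is false at a generic prior, since feedback and input noise can make the closed-loop state covariance exceed the open-loop one; it becomes available only at a fixed point, where the forcing $\Sigma_{\rho_{k}}=0$ collapses the policy to pure feedforward and thereby restores $\Sigma_{x_{k+1}}=\Sigma_{x_{k+1}}^{\text{zero}}$ step by step. Sequencing the argument correctly — propagating $\Sigma_{x_{k}}\preceq\Sigma_{x_{k}}^{\text{zero}}$ forward in $k$ while the Riccati quantities $\Pi_{k+1}$ still depend on the as-yet-unconstrained future priors, which is exactly why the policy-independent envelopes $\check{\Pi}_{k+1},\hat{\Pi}_{k+1}$ of Lemma~\ref{lem:inequalities for condition where optimal covariance matrices are full-rank} are essential — is the delicate point.
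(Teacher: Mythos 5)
Your proposal is correct and follows essentially the same route as the paper's own proof: both rest on Proposition \ref{prop:algorithm converges to equilibrium point}, the fixed-point characterization \eqref{eq:equation that equilibrium points satisfy}, the envelope bounds of Lemma \ref{lem:inequalities for condition where optimal covariance matrices are full-rank} (invoked exactly as in Theorem \ref{thm:condition where optimal covariance matrices are 0}), and a forward recursion over $k$ in which forcing $\Sigma_{\rho_{k}}=0$ collapses the policy to zero feedforward and propagates the zero-input state covariance to step $k+1$. The only difference is organizational: the paper interleaves the recursion with the iteration index (introducing $\check{i}_{1}$ and arguing $\hat{M}_{1}\rightarrow\hat{M}_{1}^{\text{zero}}$ along the sequence), whereas you run the induction entirely at the level of fixed points and invoke the compactness/cluster-point argument once at the end — a cleaner but equivalent packaging.
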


\section{Numerical Examples}\label{sec:Numerical Examples}

In this section, we demonstrate the validity of Theorems \ref{thm:condition where algorithm converges to nonzero covariance matrix} and \ref{thm:condition where algorithm converges to zero covariance matrix} through some numerical examples of Algorithm \ref{alg:alternating optimization algorithm for MIOCPs} for Problem \ref{prob:MIOCP}.
The terminal time is given by $T=5$.
The system is given by
\begin{align}
    A_{k}=\begin{bmatrix}
        0.9 & 0.2\\ 0.1& 1.1
    \end{bmatrix},
    B_{k}=\begin{bmatrix}
        0 \\ 0.2
    \end{bmatrix}
    , \Sigma_{w_{k}} = 10^{-3}I\ \forall k.\nonumber
\end{align}
The covariance matrix of the initial state distribution and the coefficient matrices in \eqref{eq:objective function of MIOCP} are given by
\begin{align}
    \Sigma_{x_{\text{ini}}}=\begin{bmatrix}
        7 & 3\\ 3 & 5
    \end{bmatrix},F=10I, R_{k}=I \ \forall k. \nonumber
\end{align}
The initialized prior $\rho^{(0)},\rho_{k}^{(0)}(\cdot)=\mathcal{N}(0, \Sigma_{\rho_{k}^{(0)}})$ in Algorithm \ref{alg:alternating optimization algorithm for MIOCPs} is given by $ \Sigma_{\rho_{k}^{(0)}}=I \ \forall k$.

\begin{figure}[htbp]
    \begin{center}
    \begin{tabular}{c}   
      \begin{minipage}[t]{0.5\hsize}
      \centerline{\includegraphics[width=61.5mm]{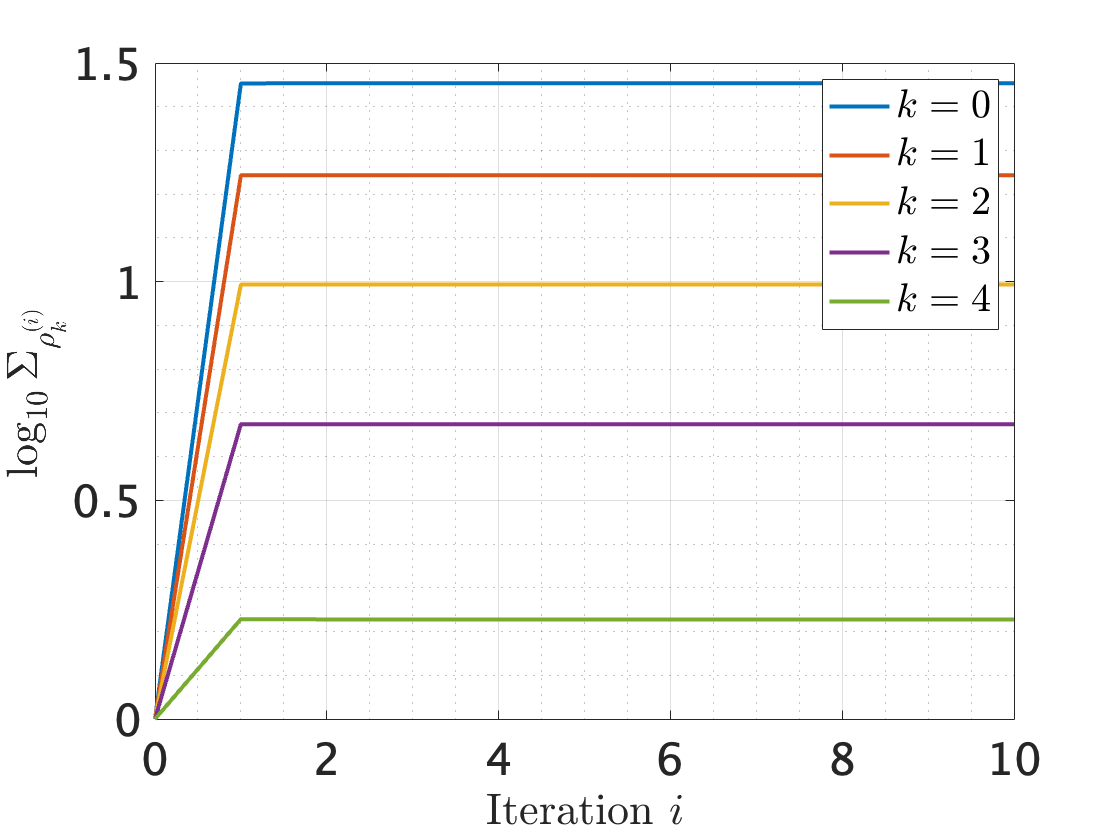}}
        \subcaption{$\varepsilon =  10^{-3}$}
        \label{fig:convergence_eps10pm3}
      \end{minipage}\\
      
      \begin{minipage}[t]{0.5\hsize}
        \centerline{\includegraphics[width=61.5mm]{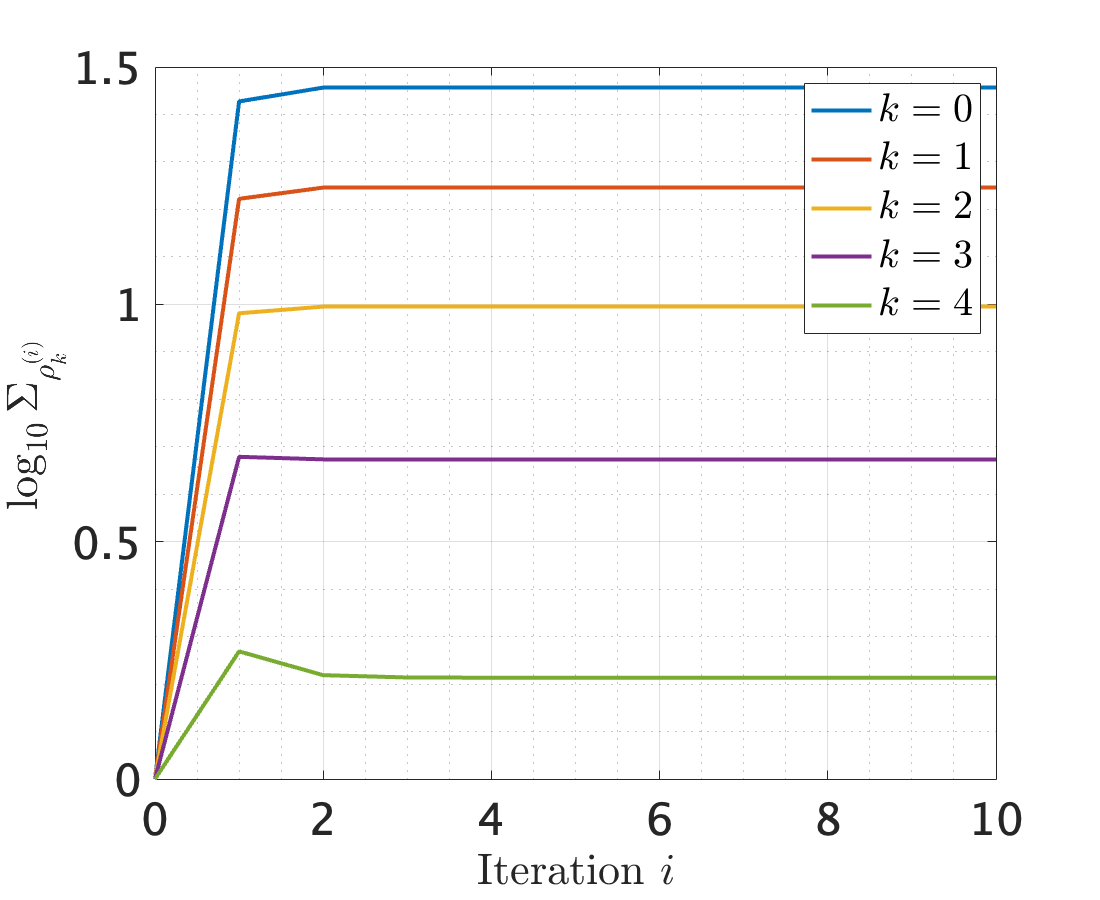}}
        \subcaption{$\varepsilon = 10^{-1}$}
        \label{fig:convergence_eps10pm1}
      \end{minipage}\\

      \begin{minipage}[t]{0.5\hsize}
      \centerline{\includegraphics[width=61.5mm]{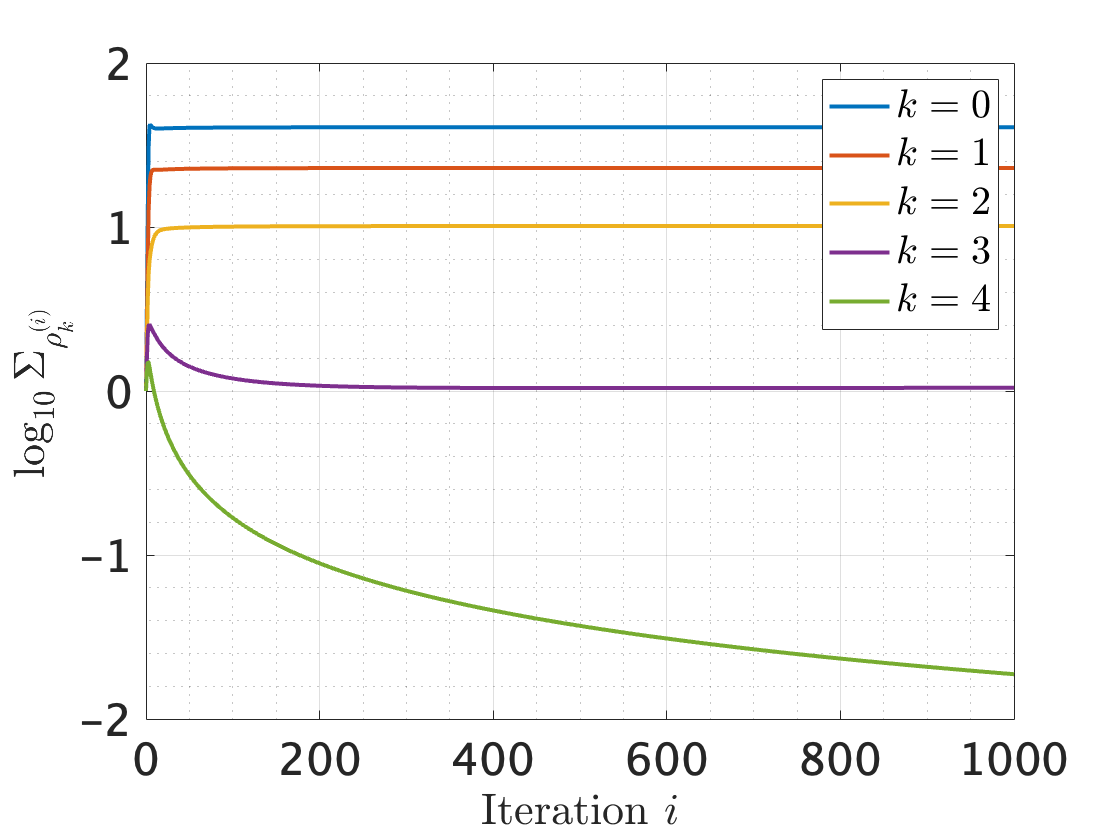}}
        \subcaption{$\varepsilon =  10$}
        \label{fig:convergence_eps10p1}
      \end{minipage}\\
      
      \begin{minipage}[t]{0.5\hsize}
        \centerline{\includegraphics[width=61.5mm]{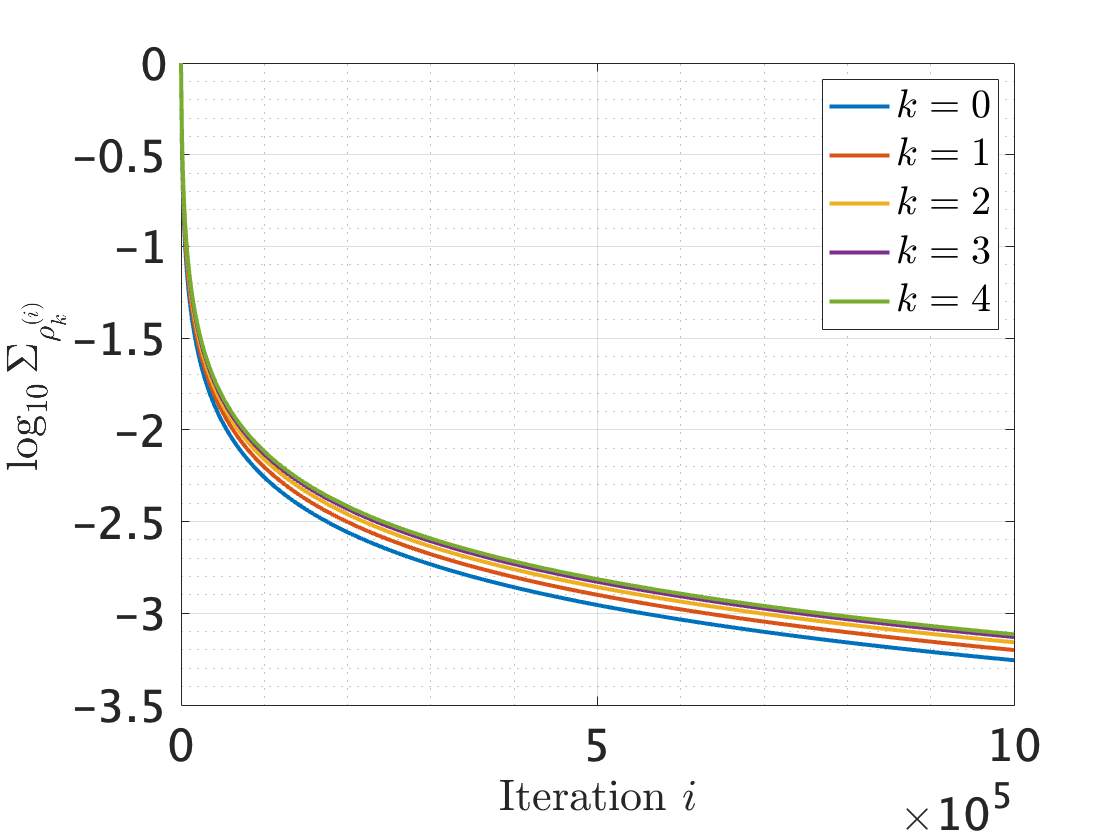}}
        \subcaption{$\varepsilon = 10^{3}$}
        \label{fig:convergence_eps10p3}
      \end{minipage}
      
    \end{tabular}
    \caption{The trajectories of $\Sigma_{\rho_{0}^{(i)}},\ldots, \Sigma_{\rho_{4}^{(i)}}$ for Problem \ref{prob:MIOCP} with $T=5$ and $\varepsilon = 10^{-3},10^{-1},10,$ and $10^{3}$.}
    \label{fig:convergence_of_prior_variances}
    \end{center}
\end{figure}

\begin{table}[htbp]
    \begin{center}
    \caption{The average of the variances of $\pi^{(10^{6})}$ for Problem \ref{prob:MIOCP} with $T=5$ and $\varepsilon = 10^{-3},10^{-1},10,$ and $10^{3}$.}
    \label{tab:comparison of average of variances of policy}
    \begin{tabular}{|c|c|} \hline
        $\varepsilon$  & $\frac{1}{T}\sum_{k=0}^{T-1}\Sigma_{\pi_{k}^{(10^6)}}$\\ \hline
        $10^{-3}$ &  $7.22\times 10^{-4}$ \\ \hline
        $10^{-1}$ &  $7.10\times 10^{-2}$  \\ \hline
        $10$ &   $2.95$ \\ \hline
        $10^{3}$  & $6.78\times 10^{-4}$ \\ \hline
    \end{tabular}
    \end{center}
\end{table}

Fig. \ref{fig:convergence_of_prior_variances} shows the trajectories of $\Sigma_{\rho_{0}^{(i)}},\ldots, \Sigma_{\rho_{4}^{(i)}}$ for different $\varepsilon$.
Table \ref{tab:comparison of average of variances of policy} shows the average of the variances of $\pi^{(10^{6})}$, which we define as $\frac{1}{T}\sum_{k=0}^{T-1}\Sigma_{\pi_{k}^{(10^6)}}$, for different $\varepsilon$.
Note that $\varepsilon = 10^{-3}$ and $\varepsilon = 10^{3}$ satisfy the assumptions of Theorems \ref{thm:condition where algorithm converges to nonzero covariance matrix} and \ref{thm:condition where algorithm converges to zero covariance matrix}, respectively, and $\varepsilon = 10^{-1}, 10$ do not satisfy these assumptions.

As shown in Figs. \ref{fig:convergence_eps10pm3} and \ref{fig:convergence_eps10p3}, all the variances $\Sigma_{\rho_{0}^{(i)}},\ldots, \Sigma_{\rho_{4}^{(i)}}$ converge to positive values for $\varepsilon = 10^{-3}$, and to zero for  $\varepsilon = 10^{3}$.
These results are consistent with Theorems \ref{thm:condition where algorithm converges to nonzero covariance matrix} and \ref{thm:condition where algorithm converges to zero covariance matrix}, respectively.
As can be seen from Fig. \ref{fig:convergence_eps10pm1}, although $\varepsilon = 10^{-1}$ does not satisfy the assumptions of Theorem \ref{thm:condition where algorithm converges to nonzero covariance matrix}, all the variances $\Sigma_{\rho_{0}^{(i)}},\ldots \Sigma_{\rho_{4}^{(i)}}$ converge to positive values.
This is because Theorem \ref{thm:condition where algorithm converges to nonzero covariance matrix} states only a sufficient condition for Algorithm \ref{alg:alternating optimization algorithm for MIOCPs} to converge to a stochastic policy, and thus is conservative.
Furthermore, Figs. \ref{fig:convergence_eps10p1} and \ref{fig:convergence_eps10p3} indicate that an increasing number of variances among $\Sigma_{\rho_{0}^{(i)}},\ldots,\Sigma_{\rho_{4}^{(i)}}$ converge to zero as $\varepsilon$ becomes larger.

As shown in Table \ref{tab:comparison of average of variances of policy}, when $\varepsilon$ is too small or too large, the average of the variances of the policy computed by Algorithm \ref{alg:alternating optimization algorithm for MIOCPs} becomes small.
On the other hand, when $\varepsilon$ is moderately large, the average of the variances of the policy increases, resulting in a larger policy stochasticity.
This result supports the claim made in the thrid paragraph of Section \ref{subsubsec:Considerations for Tuning the Temperature Parameter}.

\section{Conclusion}\label{sec:Conclusion}
In this paper, we investigated the MIOCP for stochastic discrete-time linear systems with quadratic costs and a Gaussian prior.
As preparation, we started by extending the alternating optimization algorithm for the MIOCP.
First, we analyzed the fundamental properties of the optimal solution to the MIOCP: the existence and the relationship with the temperature parameter.
Specifically, under practical assumptions, we showed that the optimal policy becomes a stochastic feedback and a deterministic feedforward policy when the temperature parameter is sufficiently small and large, respectively.
Using this result, we argued that the temperature parameter should be designed to be moderately large to increase the policy stochasticity.
Next, we showed that the policy computed by the algorithm also becomes a stochastic feedback and deterministic feedforward policy when the temperature parameter is sufficiently small and large, respectively.

Future work includes the automatic tuning of the temperature parameter.
In the context of maximum entropy optimal control, several studies have addressed this issue \cite{haarnoja2018softaa, wang2020meta}.
Another research direction is mutual information optimal density control, where both the initial and terminal distributions are given.
In particular, the relationship between mutual information density optimal control and Schr\"{o}dinger bridges \cite{peyre2019computational} is of interest.
In stochastic control, the relation with Schr\"{o}dinger bridges has been a major topic of study \cite{beghi2002relative, chen2016robust, chen2016relation}.

\begin{ack}                               
This work was supported by JSPS KAKENHI Grant Number 21H04875.  
\end{ack}

\appendix
\section*{Appendix}

\section{Proof of Proposition \ref{prop:optimal policy of MIOCP for fixed prior}} \label{app:Proof of Proposition of optimal policy of MIOCP for fixed prior}

Define the value function associated with Problem \ref{prob:MIOCP} with $\rho$ fixed as
\begin{align}
    V(k,x) :=& \min_{\pi_{k}}\mathbb{E}\left[ \frac{1}{2}\|u_{k}\|_{R_{k}}^{2} + \varepsilon \mathcal{D}_{\text{KL}}\left[\pi_{k}(\cdot|x)\| \rho_{k}\right] \right. \nonumber\\
    &+\left.\mathbb{E}[ V(k+1, A_{k}x + B_{k}u_{k}+w_{k})] \mid x_{k} = x\right],\nonumber \\
    &x \in \mathbb{R}^{n}, k \in \llbracket 0, T-1 \rrbracket, \label{eq:definition of value function for k < T}\\
    V(T,x) := &\frac{1}{2}\|x-r_{T}\|_{F}^{2}, x\in \mathbb{R}^{n}. \label{eq:definition of value function for k = T}
\end{align}
In addition, define the corresponding Q-function as
\begin{align*}
    &Q_{k}(x,u) := \frac{1}{2}\|u\|_{R_{k}}^{2} + \mathbb{E}[V(k+1, A_{k}x + B_{k}u+w_{k})],\\
    &x\in \mathbb{R}^{n}, u \in \mathbb{R}^{m}, k \in \llbracket 0, T-1 \rrbracket.
\end{align*}
Noting that the KL divergence term implicitly requires $\rho_{k}\gg \pi_{k}(\cdot|x)$, we have
\begin{align*}
    &\mathbb{E}\left[ \frac{1}{2}\|u_{k}\|_{R_{k}}^{2} + \varepsilon \mathcal{D}_{\text{KL}}\left[\pi_{k}(\cdot|x)\| \rho_{k}\right] \right.\nonumber\\
    &+\left. \mathbb{E}[V(k+1, A_{k}x + B_{k}u_{k}+w_{k})] \mid x_{k} = x\right]\\
    &= \varepsilon\int_{\mathbb{R}^{m}} \left\{\log{\frac{d\pi_{k}}{d\rho_{k}}}(u|x) + \frac{1}{\varepsilon}Q_{k}(x,u) \right\}d\pi_{k}(u|x)\\
    &=\varepsilon \mathcal{D}_{\text{KL}}\left[\pi_{k}(\cdot|x)\|\frac{\rho_{k,Q_{k}}(\cdot|x)}{z_{k}}\right]-\varepsilon \log{z_{k}},
\end{align*}
where $\rho_{k,Q_{k}}$ is defined as 
\begin{align*}
    \rho_{k,Q_{k}}(\chi|x)=\int_{\chi}\exp \left(-\frac{1}{\varepsilon}Q_{k}(x,u)\right)d\rho_{k}(u)\ \forall \chi \in \mathcal{B}_{m}
\end{align*}
and $z_{k}:= \int_{\mathbb{R}^{m}} d\rho_{k,Q_{k}}(u|x)$ is a normalization constant.
Therefore, the optimal policy satisfies $\pi_{k}^{\rho}(\cdot|x)=\rho_{k,Q_{k}}(\cdot|x)/z_{k}$.

To derive the characteristic function of $\pi_{T-1}^{\rho}(\cdot|x)$, let us calculate $Q_{T-1}(x,u)$.
\begin{align*}
    &\hspace{-10pt}Q_{T-1}(x,u)\\
    =& \frac{1}{2}\|u\|_{\varepsilon C_{T-1}}^{2}+ (A_{T-1}x-r_{T})^{\top}\Pi_{T}B_{T-1}u \\
    &+\frac{1}{2}\|A_{T-1}x-r_{T}\|_{\Pi_{T}}^{2}  + \frac{1}{2}\mathrm{Tr}[\Pi_{T}\Sigma_{w_{T-1}}].
\end{align*}
Then, we have
\begin{align}
    &\hspace{-10pt}\int_{\mathbb{R}^{m}}\exp({\mathrm{i}s^{\top}u})d\pi_{T-1}^{\rho}(u|x)\nonumber\\
    \propto &  \int_{\mathbb{R}^{m}}\exp\left( \left(\mathrm{i}s - \frac{1}{\varepsilon}B_{T-1}^{\top}\Pi_{T}(A_{T-1}x-r_{T})\right)^{\top}u\right. \nonumber\\
    &\left.-\frac{1}{2}\|u\|_{C_{T-1}}^{2}\right)d\rho_{T-1}(u). \label{eq:characteristic function of optimal policy}
\end{align}
Suppose that $\Sigma_{\rho_{T-1}}\neq 0$.
Let us choose a full column rank matrix $\bar{\Sigma}_{\rho_{T-1}} \in \mathbb{R}^{m\times \text{rank}(\Sigma_{\rho_{T-1}})}$ that satisfies
\begin{align}
    \Sigma_{\rho_{T-1}}=\bar{\Sigma}_{\rho_{T-1}}\bar{\Sigma}_{\rho_{T-1}}^{\top}. \label{eq:decomposition of positive semidefinite matrix}
\end{align}
By using $\bar{\Sigma}_{\rho_{T-1}}$, the random variable $u\sim \rho_{T-1}$ can be rewritten as $u=\mu_{\rho_{T-1}} + \bar{\Sigma}_{\rho_{T-1}}v, v\sim \mathcal{N}(0,I)$.
Then, \eqref{eq:characteristic function of optimal policy} can be calculated as
\begin{align*}
     &\int_{\mathbb{R}^{d}}\exp\left( \left(\mathrm{i}s - \frac{1}{\varepsilon}B_{T-1}^{\top}\Pi_{T}(A_{T-1}x-r_{T})\right)^{\top}\right. \\
     &\left. \times(\mu_{\rho_{T-1}}+\bar{\Sigma}_{\rho_{T-1}}v) -\frac{1}{2}\|\mu_{\rho_{T-1}}+\bar{\Sigma}_{\rho_{T-1}}v\|_{C_{T-1}}^{2}\right)\\
     &\left.\times \tilde{\mathcal{N}}(v|0,I)dv\right.\\
     &\propto  \exp \left( \mathrm{i}s^{\top}\mu_{\pi_{T-1}^{\rho}} - \frac{1}{2}\|s\|_{\Sigma_{\pi_{T-1}^{\rho}}}^{2}  \right).
\end{align*}
Because the characteristic function of a Gaussian distribution $\mathcal{N}(\mu,\Sigma)$ is given by $\exp(\mathrm{i}s^{\top}\mu-\frac{1}{2}\|s\|_{\Sigma}^{2})$ \cite{feller1991introduction}, this result implies that \eqref{eq:optimal policy of MIOCP for fixed prior} holds for $k = T-1$ if $\Sigma_{\rho_{T-1}}\neq 0$.
Next, we suppose that $\Sigma_{\rho_{T-1}}=0$.
Then, \eqref{eq:characteristic function of optimal policy} is proportional to $\exp(\mathrm{i}s^{\top}\mu_{\rho_{T-1}})$, which implies that $\pi_{T-1}^{\rho}(\cdot |x)=\mathcal{N}(\mu_{\rho_{T-1}},0)$.
This result coincides with \eqref{eq:optimal policy of MIOCP for fixed prior} because $\mu_{\pi_{T-1}^{\rho}}=\mu_{\rho_{T-1}}$ and $\Sigma_{\pi_{T-1}^{\rho}}=0$ when $\Sigma_{\rho_{T-1}}=0$.
Therefore, \eqref{eq:optimal policy of MIOCP for fixed prior} holds for $k = T-1$.
For simplicity of notation, we formally define
\begin{align}
    \bar{\Sigma}_{\rho_{T-1}}=0\in \mathbb{R}^{m\times m} \label{eq:decomposition of positive semidefinite matrix whtn it is zero matrix}
\end{align}
if $\Sigma_{\rho_{T-1}}=0$ henceforth.

The value function for $k = T-1$ can be rewritten as
\begin{align}
    &\hspace{-10pt}V(T-1, x)=  -\varepsilon  \log{z_{T-1}} \nonumber\\
    = &\frac{1}{2}\|A_{T-1}x-r_{T}\|_{\Pi_{T}}^{2} + \frac{1}{2}\mathrm{Tr}[\Pi_{T}\Sigma_{w_{T-1}}]\nonumber \\
    &- \varepsilon  \log \left\{ \int_{\mathbb{R}^{m}} \exp \left( -\frac{1}{\varepsilon}(A_{T-1}x-r_{T})^{\top}\Pi_{T}B_{T-1}u \right. \right. \nonumber \\
    &-\left. \left.\frac{1}{2}\|u\|_{C_{T-1}}^{2}\right)d\rho_{T-1}(u)\right\}. \label{eq:value function for k = T-1}
\end{align}
If $\Sigma_{\rho_{T-1}} \neq 0$, by following the same way used to rewrite \eqref{eq:characteristic function of optimal policy}, the argument of the logarithm of the last term in \eqref{eq:value function for k = T-1} can be calculated as
\begin{align*}
     &\hspace{-10pt}  \int_{\mathbb{R}^{m}} \exp \left( -\frac{1}{\varepsilon}(A_{T-1}x-r_{T})^{\top}\Pi_{T}B_{T-1}u \right. \\
     &\hspace{-10pt}-\left.\frac{1}{2}\|u\|_{C_{T-1}}^{2}\right)d\rho_{T-1}(u).\\
    =&\frac{1}{\sqrt{|I+\bar{\Sigma}_{\rho_{T-1}}^{\top}C_{T-1}\bar{\Sigma}_{\rho_{T-1}}|}} \exp \left(-\frac{1}{2}\|\mu_{\rho_{T-1}}\|_{C_{T-1}}^{2} \right. \\
    &- \frac{1}{\varepsilon}\mu_{\rho_{T-1}}^{\top}B_{T-1}^{\top}\Pi_{T}A_{T-1}(x-A_{T-1}^{-1}r_{T}) \\
    &+ \frac{1}{2}\left\|C_{T-1}\mu_{\rho_{T-1}} \right.\\
    &+\left.\left.\frac{1}{\varepsilon}B_{T-1}^{\top}\Pi_{T}A_{T-1}(x-A_{T-1}^{-1}r_{T})\right\|_{\Sigma_{\pi_{T-1}}}^{2}\right).
\end{align*}
This result also covers the case where $\Sigma_{\rho_{T-1}}=0$.
By using this result, \eqref{eq:value function for k = T-1} can be rewritten as
\begin{align}
    &\hspace{-10pt}V(T-1, x) \nonumber\\
    = &\frac{1}{2}\|x-r_{T-1}\|_{\Pi_{T-1}}^{2} + \frac{1}{2}\|\mu_{\rho_{T-1}}\|_{\Theta_{T-1}}^{2} \nonumber\\
    &+ \frac{1}{2}\mathrm{Tr}[\Pi_{T}\Sigma_{w_{T-1}}] +\frac{\varepsilon }{2}\log |I+\bar{\Sigma}_{\rho_{T-1}}^{\top}C_{T-1}\bar{\Sigma}_{\rho_{T-1}}|,\label{eq:arranged value function for k = T-1}
\end{align}
where
\begin{align}
    \Theta_{k}:=&\varepsilon C_{k}-\varepsilon C_{k}\Sigma_{\pi_{k}^{\rho}}C_{k}-(I-C_{k}\Sigma_{\pi_{k}^{\rho}})B_{k}^{\top}\Pi_{k+1}A_{k}\Pi_{k}^{-1}\nonumber \\
    &\times A_{k}^{\top}\Pi_{k+1}B_{k} (I-\Sigma_{\pi_{k}^{\rho}}C_{k}), k\in \llbracket0,T-1\rrbracket.\label{eq:Theta}
\end{align}
Since the first term of the right-hand side of \eqref{eq:arranged value function for k = T-1} takes the same form as $V(T, x)$ and the other terms are independent of $x$, we can derive the policy \eqref{eq:optimal policy of MIOCP for fixed prior} for $k = T-2, T-3, \ldots, 0,$ recursively by following the same procedure as for $k = T-1$.
In addition, it is obvious that the derivation of $\pi^{\rho}$ above holds when $\mu_{\rho_{k}}=0$ and $A_{k}$ is not invertible for any $k \in \llbracket0,T-1 \rrbracket$, which completes the proof.

\section{Proof of Proposition \ref{prop:optimal prior for fixed policy}}\label{app:Proof of Proposition of optimal prior for fixed policy}

Since $\pi$ is fixed, we have
\begin{align*}
    &\min_{\rho} \mathbb{E}\left[ \sum_{k=0}^{T-1} \left\{\frac{1}{2}\|u_{k}\|_{R_{k}}^{2} + \varepsilon \mathcal{D}_{\text{KL}}[\pi_{k}(\cdot|x_{k}) \| \rho_{k}] \right\}\right. \\
    &+\left. \frac{1}{2}\|x_{T}\|_{F}^{2} \right]\\
    &\Leftrightarrow  \min_{\rho_{k}} \mathbb{E}\left[\mathcal{D}_{\text{KL}}[\pi_{k}(\cdot|x_{k}) \| \rho_{k}]\right], k \in \llbracket 0, T-1 \rrbracket.
\end{align*}
Let us introduce the Lagrangian multiplier $\lambda \in \mathbb{R}$ for the normalization condition $\int_{\mathbb{R}^{m}} d\rho_{k}(u) = 1$.
Then, the Lagrangian of the above problem is given by
\begin{align}
    &\hspace{-10pt}\mathbb{E}\left[\mathcal{D}_{\text{KL}}[\pi_{k}(\cdot|x_{k}) \| \rho_{k}]\right] + \lambda \left(\int_{\mathbb{R}^{m}} d\rho_{k}(u)  - 1 \right)\nonumber \\
    =&\int_{\mathbb{R}^{n}} \int_{\mathbb{R}^{m}} \log{\frac{d\pi_{k}}{d\rho_{k}}(u|x_{k})} d\pi_{k}(u | x_{k}) dp(x_{k})\nonumber \\
    &+\lambda \int_{\mathbb{R}^{m}} d\rho_{k}(u)  - \lambda. \label{eq:Lagrangian of problem of optimizing prior}
\end{align}
Now, we apply the variational method.
Note that the KL divergence term implicitly requires that $\rho_{k}\gg \pi_{k}(\cdot|x)$.
By combining this with the fact that $\rho_{k}$ and $\pi_{k}(\cdot|x)$ are degenerate Gaussian distributions, $\pi_{k}(\cdot|x) \gg \rho_{k}$ is also required.
Denoting the infinitesimal variation of $\rho_{k}$ by $\delta \rho_{k}$, which satisfies that $\rho_{k}+\delta \rho_{k}\ll \pi_{k}(\cdot|x)$ and $\pi_{k}(\cdot|x)\ll \rho_{k}+\delta \rho_{k}$, for the first term of \eqref{eq:Lagrangian of problem of optimizing prior}, we have
\begin{align*}
    &\hspace{-10pt}\int_{\mathbb{R}^{n}} \int_{\mathbb{R}^{m}} \log{\frac{d\pi_{k}}{d(\rho_{k}+\delta \rho_{k})}(u|x_{k})} d\pi_{k}(u | x_{k}) dp(x_{k})\\
    =& \int_{\mathbb{R}^{n}} \int_{\mathbb{R}^{m}} - \log{\frac{d(\rho_{k}+\delta \rho_{k})}{d\pi_{k}}(u|x_{k})} d\pi_{k}(u | x_{k}) dp(x_{k})\\
    =& \int_{\mathbb{R}^{n}} \int_{\mathbb{R}^{m}} - \log\left(\frac{d\rho_{k}}{d\pi_{k}}(u|x_{k}) + \frac{d\delta \rho_{k}}{d\pi_{k}}(u|x_{k})\right)\\
    &\times d\pi_{k}(u | x_{k}) dp(x_{k}).\\
    =& \int_{\mathbb{R}^{n}} \int_{\mathbb{R}^{m}} - \left(\log\frac{d\rho_{k}}{d\pi_{k}}(u|x_{k}) +\frac{d\pi_{k}}{d\rho_{k}}(u|x_{k})\frac{d\delta \rho_{k}}{d\pi_{k}}(u|x_{k})\right.\\
    &+(\text{Second-order and higer terms of }\delta \rho_{k}))\\
    &\times d\pi_{k}(u | x_{k}) dp(x_{k}).
\end{align*}
Then, the infinitesimal variation of the first term of \eqref{eq:Lagrangian of problem of optimizing prior} is given by
\begin{align*}
    & \hspace{-10pt}\int_{\mathbb{R}^{n}} \int_{\mathbb{R}^{m}} - \frac{d\pi_{k}}{d\rho_{k}}(u|x_{k})\frac{d\delta \rho_{k}}{d\pi_{k}}(u|x_{k}) d\pi_{k}(u | x_{k}) dp(x_{k})\\
    =&\int_{\mathbb{R}^{n}} \int_{\mathbb{R}^{m}} - \frac{d\pi_{k}}{d\rho_{k}}(u|x_{k})d\delta \rho_{k}(u) dp(x_{k})\\
    =& \int_{\mathbb{R}^{m}} - \frac{d\left(\int_{\mathbb{R}^{n}}\pi_{k}(\cdot|x_{k})dp(x_{k})\right)}{d\rho_{k}}(u)d\delta \rho_{k}(u) 
\end{align*}
In addition, the infinitesimal variation of the second term of \eqref{eq:Lagrangian of problem of optimizing prior} is trivially given by
\begin{align*}
    \lambda\int_{\mathbb{R}^{m}}d\delta\rho_{k}(u).
\end{align*}
Therefore, the infinitesimal variation of \eqref{eq:Lagrangian of problem of optimizing prior} is given by
\begin{align*}
    &\int_{\mathbb{R}^{n}} \int_{\mathbb{R}^{m}}\left(\lambda - \frac{d\left(\int_{\mathbb{R}^{n}}\pi_{k}(\cdot|x_{k})dp(x_{k})\right)}{d\rho_{k}}(u) \right)d \delta \rho_{k}(u),
\end{align*}
which implies that 
\begin{align}
    \rho_{k}^{\pi}(\cdot) =& \int_{\mathbb{R}^{n}}\pi_{k}(\cdot|x_{k})dp(x_{k}).\nonumber 
\end{align}
The characteristic function of $\pi_{k}(\cdot|x_{k}) = \mathcal{N}(P_{k}x_{k}+q_{k},\Sigma_{\pi_{k}})$ is given by
\begin{align*}
    &\exp\left(\mathrm{i}s^{\top}(P_{k}x_{k}+q_{k}) - \frac{1}{2}\|s\|_{\Sigma_{\pi_{k}}}^{2}\right)\\
    &=\exp\left(\mathrm{i}s^{\top}P_{k}x_{k}\right)\exp\left(\mathrm{i}s^{\top}q_{k} - \frac{1}{2}\|s\|_{\Sigma_{\pi_{k}}}^{2}\right).
\end{align*}
Because $x_{k}\sim \mathcal{N}(\mu_{x_{k}},\Sigma_{x_{k}})$, the characteristic function of $\rho_{k}$ is given by
\begin{align*}
    &\hspace{-10pt}\mathbb{E}\left[\exp\left(\mathrm{i}s^{\top}P_{k}x_{k}\right)\exp\left(\mathrm{i}s^{\top}q_{k} - \frac{1}{2}\|s\|_{\Sigma_{\pi_{k}}}^{2}\right) \right]\\
    =&\mathbb{E}\left[\exp\left(\mathrm{i}s^{\top}P_{k}x_{k}\right) \right]\exp\left(\mathrm{i}s^{\top}q_{k} - \frac{1}{2}\|s\|_{\Sigma_{\pi_{k}}}^{2}\right)\\
    =&\exp\left(\mathrm{i}s^{\top}P_{k}\mu_{x_{k}} - \frac{1}{2}\|s\|_{P_{k}\Sigma_{x_{k}}P_{k}^{\top}}^{2}\right)\\
    &\times \exp\left(\mathrm{i}s^{\top}q_{k} - \frac{1}{2}\|s\|_{\Sigma_{\pi_{k}}}^{2}\right)\\
    =&\exp\left(\mathrm{i}s^{\top}(P_{k}\mu_{x_{k}} + q_{k}) - \frac{1}{2}\|s\|_{\Sigma_{\pi_{k}}+P_{k}\Sigma_{x_{k}}P_{k}^{\top}}^{2}\right).
\end{align*}
This implies that $\rho_{k}^{\pi} = \mathcal{N}(P_{k} \mu_{x_{k}} + q_{k}, \Sigma_{\pi_{k}} + P_{k} \Sigma_{x_{k}} P_{k}^{\top})$, which completes the proof.

\section{Proof of Proposition \ref{prop:simplification of prior class}} \label{app:Proof of Proposition pf simplification of prior class}
    In this proof, denote $\pi^{\rho}$ by $\pi_{k}^{\rho}(\cdot|x)=\mathcal{N}(P_{k}^{\rho}x+q_{k}^{\rho},\Sigma_{\pi_{k}^{\rho}})$.
    Because $\{\mu_{\rho_{k}}\}_{k=0}^{T-1}$ only affects $q_{k}^{\rho}$ and $\mu_{x_{k}}$ from \eqref{eq:mean of optimal policy of MIOCP for fixed prior} and \eqref{eq:evolution of mean of state}, under $\pi^{\rho}$, we have
    \begin{align}
        &\mathbb{E}\left[\frac{1}{2}\|u_{k}\|_{R_{k}}^{2}\right]=\frac{1}{2}\left\|P_{k}^{\rho}\mu_{x_{k}}+q_{k}^{\rho} \right\|_{R_{k}}^{2} \nonumber\\
        &+ (\text{Terms independent of }\{\mu_{\rho_{k}}\}_{k=0}^{T-1}), \label{eq:input cost under optimal policy for fixed prior}\\
        &\mathbb{E}\left[\frac{1}{2}\|x_{T}\|_{F}^{2}\right]=\frac{1}{2}\left\|\mu_{x_{T}}\right\|_{F}^{2} \nonumber\\
        &+ (\text{Terms independent of }\{\mu_{\rho_{k}}\}_{k=0}^{T-1}). \label{eq:terminal cost under optimal policy for fixed prior}
    \end{align}
    In addition, as will be shown in the latter part of this proof, we can rewrite the KL divergence term as
    \begin{align}
        &\mathbb{E}[\mathcal{D}_{\text{KL}}[\pi_{k}^{\rho}(\cdot|x_{k})\|\rho_{k}]]=\frac{1}{2}\left\|P_{k}^{\rho}\mu_{x_{k}}+q_{k}^{\rho}-\mu_{\rho_{k}} \right\|_{\Sigma_{\rho_{k}}^{\dagger}}^{2}\nonumber\\
        &+ (\text{Terms independent of }\{\mu_{\rho_{k}}\}_{k=0}^{T-1}). \label{eq:KL divergence cost under optimal policy for fixed prior}
    \end{align}
    From \eqref{eq:residual in mean of Q}, \eqref{eq:residual for k=T in mean of Q}, \eqref{eq:mean of optimal policy of MIOCP for fixed prior}, 
    \eqref{eq:evolution of mean of state}, and \eqref{eq:initial mean of state}, $\mu_{x_{k}}=0$ for any $k \in \llbracket 0,T \rrbracket$ and $q_{k}^{\rho}=0$ for any $k \in \llbracket 0,T-1 \rrbracket$ if $\mu_{\rho_{k}}=0$ for any $k \in \llbracket 0,T-1 \rrbracket$.
    In addition, the first terms of \eqref{eq:input cost under optimal policy for fixed prior}--\eqref{eq:KL divergence cost under optimal policy for fixed prior} are trivially nonnegative and they are equal to $0$ only when $\mu_{\rho_{k}}=0$ for any $k \in \llbracket 0,T-1 \rrbracket$.
    It hence follows that $(\mu_{\rho_{0}}^{\top},\ldots, \mu_{\rho_{T-1}}^{\top})^{\top}=0$ is an optimal solution.
    In addition, the positive definiteness of $R_{k}, k \in \llbracket0,T-1 \rrbracket$ implies that the optimal solution $(\mu_{\rho_{0}}^{\top},\ldots, \mu_{\rho_{T-1}}^{\top})^{\top}=0$ is unique.
    Therefore, the claim of Proposition \ref{prop:simplification of prior class} holds.
    
    Now, let us derive \eqref{eq:KL divergence cost under optimal policy for fixed prior}.
    To this end, we consider two degenerate Gaussian distributions $\mathcal{N}(\mu_{1},\Sigma_{1})$ and $\mathcal{N}(\mu_{2},\Sigma_{2})$ that are absolutely continuous with respect to each other.
    Suppose that $\mathrm{Im}(\Sigma_{1})=\mathrm{Im}(\Sigma_{2})\neq \{0\}$.
    Then, we can decompose the covariance matrices as
    \begin{align*}
        \Sigma_{1}=U_{2}H_{1}U_{2}^{\top}, \Sigma_{2}= U_{2}H_{2}U_{2}^{\top},
    \end{align*}
    where $H_{2}$ is a diagonal matrix whose diagonal entries are the nonzero eigenvalues of $\Sigma_{2}$, $H_{1}$ is a positive definite matrix of size $\text{rank}(\Sigma_{2})$, and $U_{2}\in \mathbb{R}^{m\times \text{rank}(\Sigma{2})}$ satisfies $U_{2}^{\top}U_{2}=I$.
    Then, a Radon-Nykodim derivative $d\mathcal{N}(\mu_{1},\Sigma_{1})/d\mathcal{N}(\mu_{2},\Sigma_{2})$ is given by
    \begin{align*}
        \sqrt{\frac{|H_{2}|}{|H_{1}|}}\exp \left( \frac{1}{2}\|u-\mu_{2}\|_{\Sigma_{2}^{\dagger}}^{2} -\frac{1}{2}\|u-\mu_{1}\|_{\Sigma_{1}^{\dagger}}^{2} \right).
    \end{align*}
    We omit the details of the calculation, but the validity of this result can be verified by confirming that the following equation holds.
    \begin{align*}
        &\int_{\mathbb{R}^{m}}e^{\mathrm{i}s^{\top}u}d\mathcal{N}(\mu_{1},\Sigma_{1})\\
        &= \int_{\mathbb{R}^{m}}e^{\mathrm{i}s^{\top}u}\frac{d\mathcal{N}(\mu_{1},\Sigma_{1})}{d\mathcal{N}(\mu_{2},\Sigma_{2})}d\mathcal{N}(\mu_{2},\Sigma_{2}).
    \end{align*}
    Because a variable $u \sim \mathcal{N}(\mu_{1},\Sigma_{1})$ can be rewritten as $u = \mu_{1} + U_{2}v,v\sim \mathcal{N}(0,H_{1})$, we have
    \begin{align}
        &\hspace{-10pt}\mathcal{D}_{\text{KL}}[\mathcal{N}(\mu_{1},\Sigma_{1})\|\mathcal{N}(\mu_{2},\Sigma_{2})]\nonumber\\
        =&\log\sqrt{\frac{|H_{2}|}{|H_{1}|}}\nonumber\\
        &+ \int_{\mathbb{R}^{m}} \left(\frac{1}{2}\|u-\mu_{2}\|_{\Sigma_{2}^{\dagger}}^{2} -\frac{1}{2}\|u-\mu_{1}\|_{\Sigma_{1}^{\dagger}}^{2}\right) d\mathcal{N}(\mu_{1},\Sigma_{1})\nonumber\\
        =& \frac{1}{2}\|\mu_{1}-\mu_{2}\|_{\Sigma_{2}^{\dagger}}^{2} + (\text{Terms independent of }\mu_{1},\mu_{2}). \label{eq:KL divergence of degenerate Gaussians}
    \end{align}
    Note that \eqref{eq:KL divergence of degenerate Gaussians} covers the case where $\Sigma_{1}=\Sigma_{2}=0$.
    From \eqref{eq:covariance matrix of optimal policy of MIOCP for fixed prior}, we have $\text{Im}(\Sigma_{\pi_{k}^{\rho}})=\text{Im}(\Sigma_{\rho_{k}})$.
    Furthermore, from \eqref{eq:mean of optimal policy of MIOCP for fixed prior}, it follows that
    \begin{align*}
        &\hspace{-10pt}\mu_{\rho_{k}}-\mu_{\pi_{k}^{\rho}}\\
        =&\Sigma_{\rho_{k}}(I+C_{k}\Sigma_{\rho_{k}})^{-1}C_{k}\mu_{\rho_{k}} \\
        &+\frac{1}{\varepsilon}\Sigma_{\pi_{k}^{\rho}}B_{k}^{\top}\Pi_{k+1}(A_{k}x-r_{k+1}) \in \text{Im}(\Sigma_{\rho_{k}}).
    \end{align*}
    Thus, $\rho_{k}$ and $\pi_{k}^{\rho}$ are absolutely continuous with respect to each other.
    Therefore, by applying \eqref{eq:KL divergence of degenerate Gaussians} to $\mathcal{D}_{\text{KL}}[\pi_{k}^{\rho}(\cdot|x_{k})\|\rho_{k}]$, we obtain \eqref{eq:KL divergence cost under optimal policy for fixed prior}.

\section{Proof of Lemma \ref{lem:inequalities for condition where optimal covariance matrices are full-rank}} \label{app:Proof of Lemma of inequalites}

By following the same argument as in the proof of Lemma 1, we can ensure that $\check{\Pi}_{k}  \succeq 0$.
In addition, from \eqref{eq:Riccati difference equation}, $\hat{\Pi}_{k} \succeq \Pi_{k}$ trivially holds.
Furthermore, if $\Pi_{k}\succeq \check{\Pi}_{k}  \succeq 0$ holds, \eqref{eq:inequality of covariance matrix of Q} trivially holds.
We therefore focus on the proof of $\Pi_{k}\succeq \check{\Pi}_{k}$.

For $X\succ 0$ and $Y\succeq 0$, we have
\begin{align}
    &Y^{1/2}(Y^{1/2}X^{-1}Y^{1/2}+I)^{-1}Y^{1/2}\nonumber\\
    &=Y^{1/2}(I-Y^{1/2}(X+Y)^{-1}Y^{1/2})Y^{1/2}\nonumber\\
    &=Y-Y(X+Y)^{-1}Y = X(X+Y)^{-1}Y\nonumber\\
    &=X-X(X+Y)^{-1}X \preceq X. \nonumber 
\end{align}
It hence follows that
\begin{align*}
    \Pi_{k}  = & A_{k}^{\top} \Pi_{k+1} A_{k} -\frac{1}{\varepsilon}A_{k}^{\top} \Pi_{k+1} B_{k} \Sigma_{\rho_{k}}^{1/2}\\
    &\times(\Sigma_{\rho_{k}}^{1/2}C_{k}\Sigma_{\rho_{k}}^{1/2} +  I)^{-1}\Sigma_{\rho_{k}}^{1/2} B_{k}^{\top} \Pi_{k+1} A_{k} \\
    \succeq & A_{k}^{\top} \Pi_{k+1} A_{k} -A_{k}^{\top} \Pi_{k+1} B_{k}\\
    &\times (R_{k} + B_{k}^{\top} \Pi_{k+1} B_{k})^{-1} B_{k}^{\top} \Pi_{k+1} A_{k}\\
    =&A_{k}^{\top}\Pi_{k+1}^{1/2}(I+\Pi_{k+1}^{1/2}B_{k}R_{k}^{-1}B_{k}^{\top}\Pi_{k+1}^{1/2})^{-1}\Pi_{k+1}^{1/2}A_{k}\\
    =&A_{k}^{\top}f_{k}(\Pi_{k+1})A_{k},
\end{align*}
where
\begin{align*}
    f_{k}:&\mathbb{S}_{\succeq 0}^{n}\rightarrow \mathbb{S}_{\succeq 0}^{n},\\
    &Y \mapsto Y^{1/2}(I+Y^{1/2}B_{k}R_{k}^{-1}B_{k}^{\top}Y^{1/2})^{-1}Y^{1/2}.
\end{align*}
Note that $f_{k}(Y_{1})\succeq f_{k}(Y_{2})$ holds for any $Y_{1}\succeq Y_{2} \succeq 0$ because $f_{k}$ is continuous on $\mathbb{S}_{\succeq 0}^{n}$ and for any $Y_{1}\succeq Y_{2} \succ 0$, it follows that
\begin{align*}
    f_{k}(Y_{1}) = &(Y_{1}^{-1}+B_{k}R_{k}^{-1}B_{k}^{\top})^{-1}\\
    \succeq &(Y_{2}^{-1}+B_{k}R_{k}^{-1}B_{k}^{\top})^{-1} = f_{k}(Y_{2}).
\end{align*}
Supposing that $\Pi_{k+1} \succeq \check{\Pi}_{k+1}$ holds for some $k\in \llbracket0,T-1 \rrbracket$, we have
\begin{align*}
    \Pi_{k}\succeq A_{k}^{\top}f_{k}(\Pi_{k+1})A_{k} \succeq A_{k}^{\top}f_{k}(\check{\Pi}_{k+1})A_{k} = \check{\Pi}_{k}.
\end{align*}
By combining this result with $\Pi_{T}=\check{\Pi}_{T}=F$, we have $\Pi_{k} \succeq \check{\Pi}_{k}$ for any $k \in \llbracket 0,T \rrbracket$.
Therefore, the claim of Lemma \ref{lem:inequalities for condition where optimal covariance matrices are full-rank} holds.

\section{Proof of Lemma \ref{lem:J hat is continuous}}\label{app:Proof of Lemma of continuity of J hat}

We first ensure the continuity with respect to $\Sigma_{\rho_{0}}$.
Let us fix $\Sigma_{\rho_{k}},k\neq 0$.
Then, $\check{J}$ can be arranged as
\begin{align}
    &2\check{J}(\Sigma_{\rho_{0}},\ldots,\Sigma_{\rho_{T-1}}) = \mathrm{Tr}[\Pi_{0}\Sigma_{x_{\text{ini}}}] + \varepsilon \log |\Sigma_{\rho_{k}} + \Sigma_{Q_{k}}|\nonumber\\
    &+(\text{Terms independent of }\Sigma_{\rho_{0}}). \label{eq:J hat as function of sigma_rho_0}
\end{align}
From \eqref{eq:Riccati difference equation}, $\Pi_{0}$ can be regarded as a matrix valued continuous function with respect to $\Sigma_{\rho_{0}}$.
It hence follows that the first term in \eqref{eq:J hat as function of sigma_rho_0} is continuous in $\Sigma_{\rho_{0}}$.
In addition, the second term is continuous in $\Sigma_{\rho_{0}}$ due to the positive definiteness of $\Sigma_{Q_{0}}$.
Therefore, $\check{J}$ is continuous in $\Sigma_{\rho_{0}}$.

Next, we consider the continuity with respect to $\Sigma_{\rho_{1}}$.
By fixing $\Sigma_{\rho_{k}},k\neq 1$, $\check{J}$ can be arranged as
\begin{align}
    &\hspace{-10pt}2\check{J}(\Sigma_{\rho_{0}},\ldots,\Sigma_{\rho_{T-1}}) \nonumber\\
    =& \mathrm{Tr}[\Pi_{0}\Sigma_{x_{\text{ini}}}] + \varepsilon \log |\Sigma_{\rho_{0}} + \Sigma_{Q_{0}}| -\varepsilon \log |\Sigma_{Q_{0}}|\nonumber \\
    &+ \varepsilon \log |\Sigma_{\rho_{1}} + \Sigma_{Q_{1}}| + \mathrm{Tr}[\Pi_{1}\Sigma_{w_{0}}]\nonumber \\
    &+(\text{Terms independent of }\Sigma_{\rho_{1}}). \label{eq:J hat as function of sigma_rho_1}
\end{align}
From \eqref{eq:Riccati difference equation}, $\Pi_{1}$ is continuous in $\Sigma_{\rho_{1}}$.
In addition, $\Pi_{0}$ and $\Sigma_{Q_{0}}$ are continuous in $\Pi_{1}$ from \eqref{eq:Riccati difference equation} and \eqref{eq:covariance matrix of Q}, respectively.
It hence follows that $\Pi_{0}$ and $\Sigma_{Q_{0}}$ are continuous in $\Sigma_{\rho_{1}}$.
Because the first term of \eqref{eq:J hat as function of sigma_rho_1} is continuous in $\Pi_{0}$, it is also continuous in $\Sigma_{\rho_{1}}$.
In addition, $\Sigma_{Q_{0}}$ is bounded as $\hat{\Sigma}_{Q_{0}}\succeq \Sigma_{Q_{0}}\succeq \check{\Sigma}_{Q_{0}}\succ 0$ by Lemma \ref{lem:inequalities for condition where optimal covariance matrices are full-rank}, the second and third terms of \eqref{eq:J hat as function of sigma_rho_1} are continuous in $\Sigma_{\rho_{1}}$.
Furthermore, the forth term of \eqref{eq:J hat as function of sigma_rho_1} is also continuous in $\Sigma_{\rho_{1}}$ because $\Sigma_{Q_{1}}\succ 0$ is now constant.
The fifth term is continuous in $\Pi_{1}$ and consequently it is also continuous in $\Sigma_{\rho_{1}}$. 
Therefore, $\check{J}$ is continuous in $\Sigma_{\rho_{1}}$.

Conducting this argument for $k=2,\ldots ,T-1$ completes the proof.

\section{Proof of Lemma \ref{lem:J hat is coercive}} \label{app:Proof of Lemma that J hat is coercive}
Choose $k \in \llbracket0,T-1 \rrbracket$ arbitrarily and fix $\Sigma_{\rho_{l}},l\neq k$.
From Lemma \ref{lem:inequalities for condition where optimal covariance matrices are full-rank}, for any $\Sigma_{\rho_{k}} \succeq 0$, all terms in $\check{J}$ except for $\log\frac{|\Sigma_{\rho_{k}} + \Sigma_{Q_{k}}|}{|\Sigma_{Q_{k}}|}$ are bounded both above and below.
Using this result, we can arrange $\check{J}$ as
\begin{align*}
    \frac{2}{\varepsilon}\check{J} =& \log \frac{|\Sigma_{\rho_{k}}+\Sigma_{Q_{k}}|}{|\Sigma_{Q_{k}}|} + (\text{Bounded terms}).
\end{align*}
From \eqref{eq:change of term of logarithm of fraction} and the Minkowski determinant theorem \cite[Theorem 13.5.4]{mirsky2012introduction}, we have
\begin{align*}
    \frac{|\Sigma_{\rho_{k}}+\Sigma_{Q_{k}}|}{|\Sigma_{Q_{k}}|} =& |I+\bar{\Sigma}_{\rho_{k}}^{\top}\Sigma_{Q_{k}}^{-1}\bar{\Sigma}_{\rho_{k}}|
    \\
    \geq & |I| + |\bar{\Sigma}_{\rho_{k}}^{\top}\Sigma_{Q_{k}}^{-1}\bar{\Sigma}_{\rho_{k}}| > |\bar{\Sigma}_{\rho_{k}}^{\top}\Sigma_{Q_{k}}^{-1}\bar{\Sigma}_{\rho_{k}}|.
\end{align*}
Because $\Sigma_{Q_{k}}$ is positive definite, $|\bar{\Sigma}_{\rho_{k}}^{\top}\Sigma_{Q_{k}}^{-1}\bar{\Sigma}_{\rho_{k}}|\rightarrow \infty$ as $\|\Sigma_{\rho_{k}}\|\rightarrow \infty$.
By combining this with $2\check{J}/\varepsilon > \log|\bar{\Sigma}_{\rho_{k}}^{\top}\Sigma_{Q_{k}}^{-1}\bar{\Sigma}_{\rho_{k}}| + (\text{Bounded terms})$, $\check{J}\rightarrow \infty$ as $\|\Sigma_{\rho_{k}}\|\rightarrow \infty$.
Therefore, if there exists at least one $k \in \llbracket0,T-1 \rrbracket$ such that $\|\Sigma_{\rho_{k}}\|\rightarrow \infty$, we have $\check{J} \rightarrow \infty$.
This completes the proof.

\section{Proof of Lemma \ref{lem:derivative of J_check}} \label{app:Proof of Lemma of derivative of J_check}

    We start by deriving the derivative of $\check{J}$.
    We first calculate the derivative of $\check{J}$ with respect to $\Sigma_{\rho_{0}}$.
    Because $\Sigma_{Q_{k}},\ldots \Sigma_{Q_{T-1}}, \Pi_{k+1},\ldots , \Pi_{T}$ are independent of $\Sigma_{\rho_{0}},\ldots,\Sigma_{\rho_{k}}$, we have
    \begin{align}
        2\frac{\partial\check{J}}{\partial \Sigma_{\rho_{0}}} = \frac{\partial}{\partial \Sigma_{\rho_{0}}}\mathrm{Tr}\left[\Pi_{0} \Sigma_{x_{\text{ini}}}\right] + \varepsilon  \frac{\partial}{\partial \Sigma_{\rho_{0}}} \log |\Sigma_{\rho_{0}} + \Sigma_{Q_{0}}|. \label{eq:derivative of J for l=0}
    \end{align}
    From \eqref{eq:Riccati difference equation} and \eqref{eq:covariance matrix of Q}, $\Pi_{k}$ can be rewritten as
    \begin{align*}
        \Pi_{k} = & A_{k}^{\top} \Pi_{k+1} A_{k}- \frac{1}{\varepsilon }A_{k}^{\top} \Pi_{k+1} B_{k} \\
        &\times \{ \Sigma_{Q_{k}}-\Sigma_{Q_{k}}(\Sigma_{Q_{k}} + \Sigma_{\rho_{k}})^{-1}\Sigma_{Q_{k}} \}B_{k}^{\top} \Pi_{k+1} A_{k}.
    \end{align*}
    By using formulas of matrix calculus \cite{petersen2008matrix}, the first and second terms in the right-hand side of \eqref{eq:derivative of J for l=0} can be calculated as follows, respectively.
    \begin{align*}
        &\hspace{-10pt}\frac{\partial}{\partial \Sigma_{\rho_{0}}}\mathrm{Tr}\left[\Pi_{0} \Sigma_{x_{\text{ini}}}\right]\\
        =& \frac{1}{\varepsilon }\frac{\partial}{\partial \Sigma_{\rho_{0}}}\mathrm{Tr}\left[ A_{0}^{\top} \Pi_{1} B_{0} \Sigma_{Q_{0}}(\Sigma_{Q_{0}} + \Sigma_{\rho_{0}})^{-1}\right. \\
        &\times \left.\Sigma_{Q_{0}} B_{0}^{\top} \Pi_{1} A_{0} \Sigma_{x_{\text{ini}}}\right]\\
        =& - \varepsilon  \left[ (\Sigma_{Q_{0}} + \Sigma_{\rho_{0}})^{-1} \frac{\Sigma_{Q_{0}}}{\varepsilon }B_{0}^{\top} \Pi_{1} A_{0} \Sigma_{x_{\text{ini}}} \right. \\
        &\times \left.A_{0}^{\top} \Pi_{1} B_{0} \frac{\Sigma_{Q_{0}}}{\varepsilon } (\Sigma_{Q_{0}} + \Sigma_{\rho_{0}})^{-1}  \right],\\
        &\varepsilon  \frac{\partial}{\partial \Sigma_{\rho_{0}}} \log |\Sigma_{\rho_{0}} + \Sigma_{Q_{0}}|
        = \varepsilon  (\Sigma_{Q_{0}} + \Sigma_{\rho_{0}})^{-1}.
    \end{align*}
    By substituting \eqref{eq:E}, \eqref{eq:L}, and $\Sigma_{x_{\text{ini}}}=\Sigma_{x_{0}}$, it follows that
    \begin{align}
        \frac{2}{\varepsilon }\frac{\partial\check{J}}{\partial \Sigma_{\rho_{0}}} =&  L_{0} \left( \Sigma_{Q_{0}} + \Sigma_{\rho_{0}} - E_{0}\Sigma_{x_{0}} E_{0}^{\top} \right) L_{0}.  \nonumber
    \end{align}
    Next, we consider the case $k\in \llbracket 1, T-1 \rrbracket$.
    Similar for $k = 0$, the derivative of $\check{J}$ with respect to $\Sigma_{\rho_{k}}$ can be arranged as follows:
    \begin{align}
        &\hspace{-10pt}2\frac{\partial\check{J}}{\partial \Sigma_{\rho_{k}}}\nonumber\\
        = & \frac{\partial}{\partial \Sigma_{\rho_{k}}}\mathrm{Tr}\left[\Pi_{0} \Sigma_{x_{\text{ini}}}\right] \nonumber\\
        &+ \frac{\partial}{\partial \Sigma_{\rho_{k}}}\sum_{l=0}^{k-1}\left(\varepsilon \log \frac{ |\Sigma_{\rho_{l}} + \Sigma_{Q_{l}}|}{|\Sigma_{Q_{l}}|} + \mathrm{Tr}[\Pi_{l+1}\Sigma_{w_{l}}] \right)\nonumber \\
        &+ \varepsilon  \frac{\partial}{\partial \Sigma_{\rho_{k}}}\log |\Sigma_{\rho_{k}} + \Sigma_{Q_{k}}|.\nonumber
    \end{align}
    From a straightforward calculation, for the differential $d\Sigma_{\rho_{k}}$, we have
    \begin{align*}
        &\hspace{-10pt}\mathrm{Tr}\left[(d\Pi_{0}) \Sigma_{x_{\text{ini}}}\right] + \varepsilon  d\left( \frac{\log |\Sigma_{\rho_{0}} + \Sigma_{Q_{0}}|}{|\Sigma_{Q_{0}}|}\right) \\
        &\hspace{-10pt}+ \mathrm{Tr}[(d\Pi_{1})\Sigma_{w_{0}}]\\
        =& \mathrm{Tr}\left[(d\Pi_{1})\left(A_{0}-\frac{1}{\varepsilon}B_{0}\Sigma_{\pi_{0}^{\rho}}B_{0}^{\top}\Pi_{1}A_{0}\right)\Sigma_{x_{\text{ini}}} \right. \\
        &\times \left.\left(A_{0}-\frac{1}{\varepsilon}B_{0}\Sigma_{\pi_{0}^{\rho}}B_{0}^{\top}\Pi_{1}A_{0}\right)^{\top}\right] \\
        &+ \mathrm{Tr}[(d\Pi_{1})B_{0}\Sigma_{\pi_{0}^{\rho}}B_{0}^{\top}]  + \mathrm{Tr}[(d\Pi_{1})\Sigma_{w_{0}}]\\
        =&\mathrm{Tr}\left[(d\Pi_{1}) \Sigma_{x_{1}}\right].
    \end{align*}
    By applying this result recursively, it follows that
    \begin{align*}
         2\frac{\partial \check{J}}{\partial \Sigma_{\rho_{k}}} = & \frac{\partial}{\partial \Sigma_{\rho_{k}}}\mathrm{Tr}\left[\Pi_{k} \Sigma_{x_{k}}\right] + \varepsilon  \frac{\partial}{\partial \Sigma_{\rho_{k}}}\log |\Sigma_{\rho_{k}} + \Sigma_{Q_{k}}|,
    \end{align*}
    where $\Sigma_{x_{k}}$ in the first term can be regarded as a constant with respect to $\frac{\partial}{\partial \Sigma_{\rho_{k}}}$.
    Then, applying the same argument for $k=0$, it follows that
    \begin{align}
        \frac{2}{\varepsilon }\frac{\partial\check{J}}{\partial \Sigma_{\rho_{k}}} =&  L_{k} \left( \Sigma_{Q_{k}} + \Sigma_{\rho_{k}} - E_{k}\Sigma_{x_{k}} E_{k}^{\top} \right) L_{k}. \nonumber
    \end{align}

    Now, we derive \eqref{eq:directional derivative of J_check}.
    Note that $\partial \check{J}/\partial \Sigma_{\rho_{k}}$ can be regarded as a restriction of $\check{J}_{k}^{\prime}$ to the interior of $\mathcal{M}_{T}$.
    Let us denote
    \begin{align*}
        \tilde{J}(t) := &\check{J}(\bar{\Sigma}_{\rho_{0}}+t(S_{0}-\bar{\Sigma}_{\rho_{0}}),\ldots, \nonumber\\
         &\bar{\Sigma}_{\rho_{T-1}}+t(S_{T-1}-\bar{\Sigma}_{\rho_{T-1}})), t\geq 0,\\
         \tilde{J}^{\prime}(t):=&\lim_{h\rightarrow 0} \frac{\tilde{J}(t+h)-\tilde{J}(t)}{h}\\
         =&\sum_{k=0}^{T-1}\mathrm{Tr}[\check{J}_{k}^{\prime}(\bar{\Sigma}_{\rho_{0}}+t(S_{0}-\bar{\Sigma}_{\rho_{0}}),\ldots, \bar{\Sigma}_{\rho_{T-1}} \nonumber \\
         &+t(S_{T-1}-\bar{\Sigma}_{\rho_{T-1}}))(S_{T-1}-\bar{\Sigma}_{\rho_{T-1}})],t>0.
    \end{align*}
    Then, applying the mean value theorem, for any $t>0$, there exists $t^{\prime} \in (0,t)$ such that
    \begin{align*}
        \frac{\tilde{J}(t)-\tilde{J}(0)}{t} = \tilde{J}^{\prime}(t^{\prime}).
    \end{align*}
    Therefore, we have
     \begin{align*}
        &\lim_{t\rightarrow +0} \left\{\check{J}(\bar{\Sigma}_{\rho_{0}}+t(S_{0}-\bar{\Sigma}_{\rho_{0}}),\ldots,\right. \nonumber\\
        &\left. \bar{\Sigma}_{\rho_{T-1}}+t(S_{T-1}-\bar{\Sigma}_{\rho_{T-1}}))-\check{J}(\bar{\Sigma}_{\rho_{0}},\ldots,\bar{\Sigma}_{\rho_{T-1}})\right\}/t\nonumber\\
        &=\lim_{t\rightarrow +0}\frac{\tilde{J}(t)-\tilde{J}(0)}{t}\\
        &= \lim_{t^{\prime}\rightarrow +0}\tilde{J}^{\prime}(t^{\prime})\\
        &=\sum_{k=0}^{T-1}\mathrm{Tr}\left[\check{J}_{k}^{\prime}(\bar{\Sigma}_{\rho_{0}},\ldots,\bar{\Sigma}_{\rho_{T-1}})(S_{k}-\bar{\Sigma}_{\rho_{k}})\right], 
    \end{align*}
    which completes the proof.

\section{Proof of Theorem \ref{thm:condition where optimal covariance matrices are full-rank}} \label{app:Proof of Theorem of condition where optimal covariance matrices are full-rank}
    Following the same argument as in the proof of Lemma \ref{lem:positive semidefiniteness of Pi}, we can show that $\check{\Pi}_{k}\succ 0$ for any $k\in \llbracket 0,T-1 \rrbracket$ under the invertibility of $A_{k}$.
    In addition, we have $\Sigma_{w_{k-1}} \succ 0$ for any $k\in \llbracket 0,T-1 \rrbracket$.
    Combining these with the assumptions that $A_{k}$ is invertible and $B_{k}$ is full column rank for any $k \in \llbracket0,T-1 \rrbracket$, the first term of the right-hand side of \eqref{eq:condition where optimal covariance matrices are full-rank} is positive definite.
    We can therefore choose $\varepsilon$ such that $\check{M}_{k}\succ 0$ for any $k \in \llbracket 0,T-1 \rrbracket$.
    In this proof, we assume that $\varepsilon$ is chosen in this way henceforth.

    From \cite[Proposition 2.1.1]{borwein2006convex}, a necessary condition for $\{\Sigma_{\rho_{k}^{*}}\}_{k=0}^{T-1}$ to be an optimal solution is that
    \begin{align}
        &\sum_{k=0}^{T-1}\mathrm{Tr}\left[ \check{J}_{k}^{\prime}(\Sigma_{\rho_{0}^{*}},\ldots, \Sigma_{\rho_{T-1}^{*}})(S_{k}-\Sigma_{\rho_{k}^{*}})\right] \geq 0 \nonumber \\
        &\forall (S_{0},\ldots , S_{T-1})\in \mathcal{M}_{T}. \label{eq:neccesary condition of optimal solutions}
    \end{align}
    Let us show that this condition is equivalent to
    \begin{align}
        &\mathrm{Tr}\left[ \check{J}_{k}^{\prime}(\Sigma_{\rho_{0}^{*}},\ldots, \Sigma_{\rho_{T-1}^{*}})(S_{k}-\Sigma_{\rho_{k}^{*}})\right] \geq 0 \nonumber \\
        &\forall S_{k}\in \mathbb{S}_{\succeq 0}^{m}, k \in \llbracket0,T-1 \rrbracket.\label{eq:rewritten neccesary condition of optimal solutions}
    \end{align}
    It trivially follows that \eqref{eq:rewritten neccesary condition of optimal solutions} $\Rightarrow$ \eqref{eq:neccesary condition of optimal solutions}.
    To show \eqref{eq:neccesary condition of optimal solutions} $\Rightarrow$ \eqref{eq:rewritten neccesary condition of optimal solutions}, suppose that \eqref{eq:rewritten neccesary condition of optimal solutions} does not hold, that is, there exists some $k \in \llbracket 0,T-1 \rrbracket$ such that 
    \begin{align*}
        \exists S_{k} \in \mathbb{S}_{\succeq 0}^{m}, \mathrm{Tr}\left[ \check{J}_{k}^{\prime}(\Sigma_{\rho_{0}^{*}},\ldots, \Sigma_{\rho_{T-1}^{*}})(S_{k}-\Sigma_{\rho_{k}^{*}})\right] < 0.\nonumber
    \end{align*}
    Then, by choosing $S_{l}=\Sigma_{\rho_{l}^{*}},l\neq k$, we have
    \begin{align*}
        &\sum_{k=0}^{T-1}\mathrm{Tr}\left[ \check{J}_{k}^{\prime}(\Sigma_{\rho_{0}^{*}},\ldots, \Sigma_{\rho_{T-1}^{*}})(S_{k}-\Sigma_{\rho_{k}^{*}})\right] <0,
    \end{align*}
    which implies that \eqref{eq:neccesary condition of optimal solutions} does not hold.
    Considering the contraposition, we have \eqref{eq:neccesary condition of optimal solutions} $\Rightarrow$ \eqref{eq:rewritten neccesary condition of optimal solutions}.
    It hence follows that \eqref{eq:neccesary condition of optimal solutions} $\Leftrightarrow$ \eqref{eq:rewritten neccesary condition of optimal solutions}.

    Now, we show that $\Sigma_{\rho_{k}^{*}}\succ 0$.
    By \eqref{eq:derivative of J_check}, it follows that
    \begin{align*}
        &\mathrm{Tr}\left[ \check{J}_{k}^{\prime}(\Sigma_{\rho_{0}^{*}},\ldots, \Sigma_{\rho_{T-1}^{*}})(S_{k}-\Sigma_{\rho_{k}^{*}})\right]\\
       &=\frac{\varepsilon}{2}\mathrm{Tr}\left[L_{k}(\Sigma_{\rho_{k}^{*}}+\Sigma_{Q_{k}}-E_{k}\Sigma_{x_{k}}E_{k}^{\top})L_{k}(S_{k}-\Sigma_{\rho_{k}^{*}})\right].
    \end{align*}
    Because we choose $\varepsilon$ such that $\check{M}_{k} \succ 0$, we have
    \begin{align*}
        \Sigma_{Q_{k}}-E_{k}\Sigma_{x_{k}}E_{k}^{\top} \preceq -\check{M}_{k} \prec 0.
    \end{align*}
    Suppose that $\Sigma_{\rho_{k}^{*}}$ has at least one zero eigenvalue.
    Then, $L_{k}(\Sigma_{\rho_{k}^{*}}+\Sigma_{Q_{k}}-E_{k}\Sigma_{x_{k}}E_{k}^{\top})L_{k}$ has at least one negative eigenvalue.
    Let $U_{k}\text{diag}(\sigma_{k,1},\ldots, \sigma_{k,m})U_{k}^{\top}$ be the eigenvalue decomposition of $L_{k}(\Sigma_{\rho_{k}^{*}}+\Sigma_{Q_{k}}-E_{k}\Sigma_{x_{k}}E_{k}^{\top})L_{k}$, where $\text{diag}(\sigma_{k,1},\ldots, \sigma_{k,m})$ is the diagonal matrix with entries $\sigma_{k,1},\ldots, \sigma_{k,m}$ on the diagonal and $\sigma_{k,m}$ is a negative eigenvalue.
    If we choose $S_{k} = \Sigma_{\rho_{k}^{*}} + U_{k}\text{diag}(0,\ldots ,0,1)U_{k}^{\top}$, it follows that
    \begin{align*}
        &\hspace{-10pt}\mathrm{Tr}\left[\frac{\partial \check{J}(\Sigma_{\rho_{0}^{*}},\ldots , \Sigma_{\rho_{T-1}^{*}})}{\partial \Sigma_{\rho_{k}}}(S_{k}-\Sigma_{\rho_{k}^{*}}) \right]\\
        =&\frac{\varepsilon}{2}\mathrm{Tr}\left[U_{k}\text{diag}(\sigma_{k,1},\ldots, \sigma_{k,m})U_{k}^{\top} \right.\\
        &\times \left.U_{k}\text{diag}(0,\ldots ,0,1)U_{k}^{\top}\right]\\
        =&\frac{\varepsilon}{2}\mathrm{Tr}\left[U_{k}\text{diag}(0,\ldots ,0,\sigma_{k,m})U_{k}^{\top}\right]<0.
    \end{align*}
    This contradicts the fact that $\Sigma_{\rho_{k}^{*}}$ is an optimal solution, which completes the proof.

\section{Proof of Theorem \ref{thm:condition where optimal covariance matrices are 0}} \label{app:Proof of Theorem of condition where optimal covariance matrices are 0}
    We will employ a similar argument to that used in the proof of Theorem \ref{thm:condition where optimal covariance matrices are full-rank}.
    From \eqref{eq:derivative of J_check}, we have
     \begin{align*}
        &\mathrm{Tr}\left[\check{J}_{0}^{\prime}(\Sigma_{\rho_{0}^{*}},\ldots , \Sigma_{\rho_{T-1}^{*}})(S_{0}-\Sigma_{\rho_{0}^{*}}) \right]\\
        &=\frac{\varepsilon}{2}\mathrm{Tr}\left[L_{0}(\Sigma_{\rho_{0}^{*}}+\Sigma_{Q_{0}}-E_{0}\Sigma_{x_{0}}^{\text{zero}}E_{0}^{\top})L_{0}(S_{0}-\Sigma_{\rho_{0}^{*}})\right],
    \end{align*}
    where $S_{0} \in \mathbb{S}_{\succeq 0}^{m}$.
    Because we choose $\varepsilon$ such that $\hat{M}_{k}^{\text{zero}}\prec 0$, it follows that
    \begin{align*}
        \Sigma_{Q_{0}} - E_{0}\Sigma_{x_{0}}^{\text{zero}}E_{0}^{\top} \succeq -\hat{M}_{0}^{\text{zero}} \succ 0.
    \end{align*}
    Suppose that $\Sigma_{\rho_{0}^{*}}\neq 0$.
    By choosing $S_{0}=0$, we have
    \begin{align*}
        &\frac{\varepsilon}{2}\mathrm{Tr}\left[L_{0}(\Sigma_{\rho_{0}^{*}}+\Sigma_{Q_{0}}-E_{0}\Sigma_{x_{0}}^{\text{zero}}E_{0}^{\top})L_{0}(S_{0}-\Sigma_{\rho_{0}^{*}})\right]\\
        &=-\frac{\varepsilon}{2}\mathrm{Tr}\left[\Sigma_{\rho_{0}^{*}}^{\frac{1}{2}}L_{0}(\Sigma_{\rho_{0}^{*}}+\Sigma_{Q_{0}}-E_{0}\Sigma_{x_{0}}^{\text{zero}}E_{0}^{\top})L_{0}\Sigma_{\rho_{0}^{*}}^{\frac{1}{2}}\right]\\
        &\leq  -\frac{\varepsilon}{2}\mathrm{Tr}\left[\Sigma_{\rho_{0}^{*}}^{\frac{1}{2}}L_{0}(\Sigma_{\rho_{0}^{*}}-\hat{M}_{0}^{\text{zero}})L_{0}\Sigma_{\rho_{0}^{*}}^{\frac{1}{2}}\right]<0.
    \end{align*}
    This contradicts the optimality of $\Sigma_{\rho_{0}^{*}}$.
    It hence follows that $\Sigma_{\rho_{0}^{*}} = 0$, that is, $\pi_{0}^{*}(\cdot|x)=\mathcal{N}(0,0)$.
    Under this optimal policy, we have $\Sigma_{x_{1}}=\Sigma_{x_{1}}^{\text{zero}}$.
    By applying this argument recursively, we obtain the desired result.

\section{Proof of Proposition \ref{prop:algorithm converges to equilibrium point}} \label{app:Proof of Proposition of algorithm converges to equilibrium point}
    We start by showing that $\rho = \mathcal{A}(\rho) \Leftrightarrow J(\rho) = J(\mathcal{A}(\rho))$.
    It trivially holds that $\rho = \mathcal{A}(\rho) \Rightarrow J(\rho) = J(\mathcal{A}(\rho))$.
    To show the converse, let us suppose that $J(\rho)=J(\mathcal{A}(\rho))$.
    Because we minimize $J$ alternatively in Algorithm \ref{alg:alternating optimization algorithm for MIOCPs}, it follows that $J(\rho)=J(\pi^{\rho},\rho)\geq J(\pi^{\rho},\mathcal{A}(\rho))\geq J(\pi^{\mathcal{A}(\rho)},\mathcal{A}(\rho))=J(\mathcal{A}(\rho))$.
    It hence follows that $J(\pi^{\rho},\rho)=J(\pi^{\rho},\mathcal{A}(\rho))$.
    Because the optimal prior for the fixed policy $\pi^{\rho}$ is unique from Proposition \ref{prop:optimal prior for fixed policy}, we have $\rho = \mathcal{A}(\rho)$.

    Now, we show that $\mathcal{E}\subset\{ \rho \in \mathcal{R}^{*} | \rho = \mathcal{A}(\rho)\}$.
    Because $J(\rho^{(i)})\leq J(\rho^{(0)})$, $\{\Sigma_{\rho_{k}^{(i)}}\}_{k=0}^{T-1}$ is in a level set
    \begin{align*}
        &\left\{ \left\{\Sigma_{\rho_{k}}\right\}_{k=0}^{T-1} \in \mathcal{M}_{T} \mid\right. \\
        &\left. \check{J}(\Sigma_{\rho_{0}},\ldots , \Sigma_{\rho_{T-1}}) \leq \check{J}\left(\Sigma_{\rho_{0}^{(0)}},\ldots , \Sigma_{\rho_{T-1}^{(0)}}\right)\right\}
    \end{align*}
    for any $i \in \mathbb{Z}_{\geq 0}$.
    In addition, this level set is bounded because $\check{J}$ is coercive from Lemma \ref{lem:J hat is coercive}.
    Thus, by identifying $\rho^{(i)}$ with $(\Sigma_{\rho_{0}^{(i)}},\ldots,\Sigma_{\rho_{T-1}^{(i)}})$, we may regard $\{\rho^{(i)}\}_{i\in \mathbb{Z}_{\geq 0}}$ as a sequence in a compact set, and it hence follows that $\mathcal{E}$ is not empty \cite[Theorem 17.4]{willard2012general}.
    Because we minimize $J$ alternatively in Algorithm \ref{alg:alternating optimization algorithm for MIOCPs} and $J(\rho)\geq 0$ for any $\rho \in \mathcal{R}$, there exists $\alpha \geq 0$ such that $\lim_{i\rightarrow \infty}J(\rho^{(i)})=\alpha$.
    Then, any $\rho^{(\infty)} \in \mathcal{E}$ satisfies that $J(\rho^{(\infty)})=J(\mathcal{A}(\rho^{(\infty)}))=\alpha$, and consequently we have $\rho^{(\infty)}=\mathcal{A}(\rho^{(\infty)})$.
    Therefore, the claim of Proposition \ref{prop:algorithm converges to equilibrium point} holds.

\section{Proof of Theorem \ref{thm:condition where algorithm converges to nonzero covariance matrix}} \label{app:Proof of Theorem of condition where algorithm converges to nonzero covariance matrix}
    In this proof, we denote $\Sigma_{\rho_{k}^{(i)}}$ and $\Sigma_{\rho_{k}^{(i+1)}}$ by $\Sigma_{\rho_{k}}$ and $\Sigma_{\rho_{k}}^{+}$, respectively.
    Note that $\Sigma_{x_{k}}, \Pi_{k}, k\in \llbracket0,T \rrbracket$ and $\Sigma_{Q_{k}}, k\in \llbracket0,T-1 \rrbracket$ are calculated by using $\{\Sigma_{\rho_{k}}^{(i)}\}_{k=0}^{T-1}$.

    Suppose that $\Sigma_{\rho_{k}} \prec \check{M}_{k}$.
    Because $\Sigma_{\rho_{k}}^{+}-\Sigma_{\rho_{k}}$ is given by the left-hand side of \eqref{eq:equation that equilibrium points satisfy}, we have
    \begin{align*}
        &\Sigma_{\rho_{k}}^{+}-\Sigma_{\rho_{k}}\\
        &= \Sigma_{\rho_{k}} L_{k}(E_{k}\Sigma_{x_{k}}E_{k}^{\top} - \Sigma_{\rho_{k}}- \Sigma_{Q_{k}})L_{k}\Sigma_{\rho_{k}} \\        
        &\succ\Sigma_{\rho_{k}} L_{k}(\check{M}_{k} - \Sigma_{\rho_{k}})L_{k}\Sigma_{\rho_{k}}  \succ 0.   
    \end{align*}
    It hence follows that $\|\Sigma_{\rho_{k}}\|<\|\Sigma_{\rho_{k}}^{+}\|$.

    Next, we suppose that $\Sigma_{\rho_{k}} \prec \check{M}_{k}$ does not hold.
    Then, we have $\max(\Sigma_{\rho_{k}})\geq \gamma_{k}$, where $\gamma_{k}:=\min(\check{M}_{k})>0$.
    Because $\Sigma_{\rho_{k}}^{+}-\Sigma_{\rho_{k}}$ is given by the left-hand side of \eqref{eq:equation that equilibrium points satisfy}, we have
    \begin{align*}
        &\hspace{-10pt}\Sigma_{\rho_{k}}^{+} = (\Sigma_{\rho_{k}}^{+}-\Sigma_{\rho_{k}})+\Sigma_{\rho_{k}}\\
        =& \Sigma_{\rho_{k}}L_{k} \left(E_{k}\Sigma_{x_{k}}E_{k}^{\top} -  \Sigma_{\rho_{k}} -\Sigma_{Q_{k}}\right)L_{k}\Sigma_{\rho_{k}} + \Sigma_{\rho_{k}}\\
        \succeq& \Sigma_{\rho_{k}}L_{k}(\check{M}_{k} + \Sigma_{Q_{k}}-\Sigma_{\rho_{k}} - \Sigma_{Q_{k}})L_{k}\Sigma_{\rho_{k}} + \Sigma_{\rho_{k}}\\
        =& \Sigma_{\rho_{k}}L_{k}(\check{M}_{k} + \Sigma_{Q_{k}})L_{k}\Sigma_{\rho_{k}} - \Sigma_{\rho_{k}}(\Sigma_{\rho_{k}}+\Sigma_{Q_{k}})^{-1}\Sigma_{\rho_{k}} \\
        &+ \Sigma_{\rho_{k}}\\
        =&\Sigma_{\rho_{k}}L_{k}(\check{M}_{k} + \Sigma_{Q_{k}})L_{k}\Sigma_{\rho_{k}}\\
        &+ \Sigma_{\rho_{k}}^{1/2}(\Sigma_{\rho_{k}}^{1/2}\Sigma_{Q_{k}}^{-1}\Sigma_{\rho_{k}}^{1/2}+I)^{-1}\Sigma_{\rho_{k}}^{1/2}\\
        \succeq &\Sigma_{\rho_{k}}(\Sigma_{\rho_{k}}+\Sigma_{Q_{k}})^{-1}(\check{M}_{k} + \Sigma_{Q_{k}})(\Sigma_{\rho_{k}}+\Sigma_{Q_{k}})^{-1}\Sigma_{\rho_{k}}.
    \end{align*}
    Because $\{\rho^{(i)}\}_{i\in \mathbb{Z}_{\geq 0}}$ is a sequence in a compact set from the proof of Proposition \ref{prop:algorithm converges to equilibrium point}, there exists $\kappa>0$ such that $\{\Sigma_{\rho_{k}^{(i)}}\}_{k=0}^{T-1} \in \{\{\Sigma_{\rho_{k}}\}_{k=0}^{T-1} \in \mathcal{M}_{T} \mid \Sigma_{\rho_{k}} \leq \kappa I\ \forall k \in \llbracket0,T-1 \rrbracket\}$ for any $i \in \mathbb{Z}_{\geq 0}$.
    Using this, we have
    \begin{align*}
        \Sigma_{\rho_{k}}^{+}\succeq &\Sigma_{\rho_{k}}(\kappa I+ \hat{\Sigma}_{Q_{k}})^{-1}(\check{M}_{k}+\check{\Sigma}_{Q_{k}})(\kappa I+\hat{\Sigma}_{Q_{k}})^{-1}\Sigma_{\rho_{k}}.
    \end{align*}
    By denoting that $\gamma_{k}^{\prime}:=\min((\kappa I+ \hat{\Sigma}_{Q_{k}})^{-1}(\check{M}_{k}+\check{\Sigma}_{Q_{k}})(\kappa I+\hat{\Sigma}_{Q_{k}})^{-1})>0$, we have
    \begin{align*}
        \Sigma_{\rho_{k}}^{+}\succeq \gamma_{k}^{\prime}\Sigma_{\rho_{k}}\Sigma_{\rho_{k}},
    \end{align*}
    which implies that $\max(\Sigma_{\rho_{k}}^{+})\geq\gamma_{k}^{2}\gamma_{k}^{\prime} \Rightarrow \|\Sigma_{\rho_{k}}^{+}\|\geq\gamma_{k}^{2}\gamma_{k}^{\prime}$.

    Combining the arguments of the above two cases, we obtain that $\|\Sigma_{\rho_{k}}^{(i)}\|\geq \min (\|\Sigma_{\rho_{k}}^{(0)}\|,\gamma_{k}^{2}\gamma_{k}^{\prime})>0$ for any $i \in \mathbb{Z}_{\geq 0}$,
    which implies that $\Sigma_{\rho_{k}^{(i)}}$ can not approach $0$.
    Therefore, the claim of Theorem \ref{thm:condition where algorithm converges to nonzero covariance matrix} holds.

\section{Proof of Theorem \ref{thm:condition where algorithm converges to zero covariance matrix}} \label{app:Proof of Theorem of condition where algorithm converges to zero covariance matrix}
    We use the same notation as in the proof of Theorem \ref{thm:condition where algorithm converges to nonzero covariance matrix}.
    Let us define
    \begin{align*}
        \hat{M}_{k}:=&\nonumber(R_{k}+B_{k}^{\top}\check{\Pi}_{k+1}B_{k})^{-1}B_{k}^{\top}\hat{\Pi}_{k+1}A_{k}\Sigma_{x_{k}}A_{k}^{\top}\hat{\Pi}_{k+1}B_{k}\\
        &(R_{k}+B_{k}^{\top}\check{\Pi}_{k+1}B_{k})^{-1} - \varepsilon(R_{k}+B_{k}^{\top}\hat{\Pi}_{k+1}B_{k})^{-1}.
    \end{align*}
    Because we choose $\varepsilon$ such that $\hat{M}_{k}^{\text{zero}}\prec 0$ for any $k \in \llbracket0,T-1 \rrbracket$, we have $\hat{M}_{0}=\hat{M}_{0}^{\text{zero}}\prec 0$.
    It hence follows that
    \begin{align*}
        &L_{0}(E_{0}\Sigma_{x_{0}}E_{0}^{\top} - \Sigma_{\rho_{0}}- \Sigma_{Q_{0}})L_{0} \prec  L_{0}\hat{M}_{0}L_{0}\prec 0.
    \end{align*}
    Then, the solution to \eqref{eq:equation that equilibrium points satisfy} for $k=0$ is uniquely given by $\Sigma_{\rho_{0}}=0$.
    Combining this with Proposition \ref{prop:algorithm converges to equilibrium point}, $\Sigma_{\rho_{0}}^{(i)}$ converges to $0$ as $i \rightarrow \infty$.
    Then, $\Sigma_{x_{1}}$ also converges to $\Sigma_{x_{1}}^{\text{zero}}$, and consequently $\hat{M}_{1}$ converges to $\hat{M}_{1}^{\text{zero}}$.
    It hence follows that there exists $\check{i}_{1}\in \mathbb{Z}_{\geq 0}$ such that $\hat{M}_{1} \prec 0$ for any $i \in \mathbb{Z}_{\geq \check{i}_{1}}$.
    Henceforth, we consider $i \in \mathbb{Z}_{\geq \check{i}_{1}}$.
    By applying this argument for $k=1,\ldots,T-1$, recursively, the claim of Theorem \ref{thm:condition where algorithm converges to zero covariance matrix} holds.

\bibliographystyle{plain}        
\bibliography{Automatica_MIOCP}  

@inproceedings{grau2018soft,
  title={Soft {Q}-learning with mutual-information regularization},
  author={Grau-Moya, Jordi and Leibfried, Felix and Vrancx, Peter},
  booktitle={International Conference on Learning Representations},
  year={2018}
}

@inproceedings{leibfried2020mutual,
  title={Mutual-information regularization in Markov decision processes and actor-critic learning},
  author={Leibfried, Felix and Grau-Moya, Jordi},
  booktitle={Conference on Robot Learning},
  pages={360--373},
  year={2020},
  organization={PMLR}
}

@article{malloy2020deep,
  title={Deep {RL} with information constrained policies: Generalization in continuous control},
  author={Malloy, Tyler and Sims, Chris R and Klinger, Tim and Liu, Miao and Riemer, Matthew and Tesauro, Gerald},
  journal={arXiv preprint arXiv:2010.04646},
  year={2020}
}

@article{enami2025mutual,
  title={Mutual Information Optimal Control of Discrete-Time Linear Systems},
  author={Enami, Shoju and Kashima, Kenji},
  journal={IEEE Control Systems Letters},
  year={2025},
  publisher={IEEE},
  doi={10.1109/LCSYS.2025.3587369},
  note={Early Access}

}

@inproceedings{haarnoja2018soft,
      title={Soft actor-critic: Off-policy maximum entropy deep reinforcement learning with a stochastic actor},
      author={Haarnoja, Tuomas and Zhou, Aurick and Abbeel, Pieter and Levine, Sergey},
      booktitle={International Conference on Machine Learning},
      pages={1861--1870},
      year={2018},
      organization={PMLR}
    }

@inproceedings{haarnoja2017reinforcement,
      title={Reinforcement learning with deep energy-based policies},
      author={Haarnoja, Tuomas and Tang, Haoran and Abbeel, Pieter and Levine, Sergey},
      booktitle={International Conference on Machine Learning},
      pages={1352--1361},
      year={2017},
      organization={PMLR}
    }

@article{ito2023maximum,
      title={Maximum entropy optimal density control of discrete-time linear systems and {S}chr{\"o}dinger bridges},
      author={Ito, Kaito and Kashima, Kenji},
      journal={IEEE Transactions on Automatic Control},
      year={2023},
      publisher={IEEE}
    }

@article{ito2024maximum,
      title={Maximum entropy density control of discrete-time linear systems with quadratic cost},
      author={Ito, Kaito and Kashima, Kenji},
      journal={IEEE Transactions on Automatic Control},
      year={2024},
      publisher={IEEE}
    }

@article{eysenbach2021maximum,
      title={Maximum entropy {RL} (provably) solves some robust {RL} problems},
      author={Eysenbach, Benjamin and Levine, Sergey},
      journal={arXiv preprint arXiv:2103.06257},
      year={2021}
    }

@inproceedings{hazan2019provably,
      title={Provably efficient maximum entropy exploration},
      author={Hazan, Elad and Kakade, Sham and Singh, Karan and Van Soest, Abby},
      booktitle={International Conference on Machine Learning},
      pages={2681--2691},
      year={2019},
      organization={PMLR}
    }

@article{levine2018reinforcement,
      title={Reinforcement learning and control as probabilistic inference: Tutorial and review},
      author={Levine, Sergey},
      journal={arXiv preprint arXiv:1805.00909},
      year={2018}
    }

@inproceedings{cundy2024privacy,
  title={Privacy-constrained policies via mutual information regularized policy gradients},
  author={Cundy, Chris J and Desai, Rishi and Ermon, Stefano},
  booktitle={International Conference on Artificial Intelligence and Statistics},
  pages={2809--2817},
  year={2024},
  organization={PMLR}
}

@book{lewis2012optimal,
  title={Optimal Control},
  author={Lewis, Frank L and Vrabie, Draguna and Syrmos, Vassilis L},
  year={2012},
  publisher={John Wiley \& Sons}
}

@book{andreasson2020introduction,
  title={An Introduction to Continuous Optimization: Foundations and Fundamental Algorithms},
  author={Andr{\'e}asson, Niclas and Evgrafov, Anton and Patriksson, Michael},
  year={2020},
  publisher={Courier Dover Publications}
}

@article{petersen2008matrix,
  title={The matrix cookbook},
  author={Petersen, Kaare Brandt and Pedersen, Michael Syskind and others},
  journal={Technical University of Denmark},
  volume={7},
  number={15},
  pages={510},
  year={2008}
}

@book{borwein2006convex,
  title={Convex Analysis and Nonlinear Optimization: Theory and Examples},
  author={Borwein, Jonathan and Lewis, Adrian},
  year={2006},
  publisher={Springer}
}

@book{feller1991introduction,
  title={An Introduction to Probability Theory and Its Applications},
  author={Feller, William},
  volume={2},
  year={1991},
  publisher={John Wiley \& Sons}
}

@book{harville1997matrix,
  title={Matrix Algebra from a Statistician's Perspective},
  author={Harville, David A},
  year={1997},
  publisher={Springer}
}

@book{mirsky2012introduction,
  title={An Introduction to Linear Algebra},
  author={Mirsky, Leonid},
  year={2012},
  publisher={Courier Corporation}
}

@article{haarnoja2018softaa,
      title={Soft actor-critic algorithms and applications},
      author={Haarnoja, Tuomas and Zhou, Aurick and Hartikainen, Kristian and Tucker, George and Ha, Sehoon and Tan, Jie and Kumar, Vikash and Zhu, Henry and Gupta, Abhishek and Abbeel, Pieter and Levine, Sergey},
      journal={arXiv preprint arXiv:1812.05905},
      year={2018}
    }

@article{wang2020meta,
      title={Meta-sac: Auto-tune the entropy temperature of soft actor-critic via metagradient},
      author={Wang, Yufei and Ni, Tianwei},
      journal={arXiv preprint arXiv:2007.01932},
      year={2020}
    }

@article{peyre2019computational,
      title={Computational optimal transport: With applications to data science},
      author={Peyr{\'e}, Gabriel and Cuturi, Marco},
      journal={Foundations and Trends{\textregistered} in Machine Learning},
      volume={11},
      number={5-6},
      pages={355--607},
      year={2019},
      publisher={Now Publishers, Inc.}
    }

@article{beghi2002relative,
      title={On the relative entropy of discrete-time Markov processes with given end-point densities},
      author={Beghi, Alessandro},
      journal={IEEE Transactions on Information Theory},
      volume={42},
      number={5},
      pages={1529--1535},
      year={2002},
      publisher={IEEE}
    }

@article{chen2016robust,
      title={Robust transport over networks},
      author={Chen, Yongxin and Georgiou, Tryphon and Pavon, Michele and Tannenbaum, Allen},
      journal={IEEE Transactions on Automatic Control},
      volume={62},
      number={9},
      pages={4675--4682},
      year={2016},
      publisher={IEEE}
    }

@article{chen2016relation,
      title={On the relation between optimal transport and {S}chr{\"o}dinger bridges: A stochastic control viewpoint},
      author={Chen, Yongxin and Georgiou, Tryphon T and Pavon, Michele},
      journal={Journal of Optimization Theory and Applications},
      volume={169},
      pages={671--691},
      year={2016},
      publisher={Springer}
    }

@book{willard2012general,
  title={General Topology},
  author={Willard, Stephen},
  year={2012},
  publisher={Courier Corporation}
}

@book{chen1984linear,
  title={Linear System Theory and Design},
  author={Chen, Chi-Tsong},
  year={1984},
  publisher={Saunders college publishing}
}

\end{document}